\documentclass[reqno]{amsart}
\usepackage{amsmath,amssymb,amsthm}
\usepackage{thmtools}
\usepackage{mathptmx}      
\usepackage{mathrsfs}
\usepackage[scr=boondox]{mathalpha}
\usepackage{latexsym}
\usepackage{amssymb}
\usepackage{bm}
\usepackage[usenames,dvipsnames]{xcolor}
\usepackage{graphicx,pifont}
\usepackage{multicol,multirow}
\usepackage{ulem,cancel}
\usepackage{tikz}
\usetikzlibrary[patterns]
\usepackage{comment}
\usepackage{array}
\usepackage{subcaption}
\usepackage{txfonts}
\usepackage{mathtools}
\usepackage[shortlabels]{enumitem}
\usepackage{hyperref}
\usepackage{booktabs}

\usepackage{cleveref}
\crefname{equation}{}{}
\crefname{enumi}{}{}
\crefname{figure}{Figure}{Figure}
\crefname{table}{Table}{Table}
\crefname{subsection}{Subsection}{Subsections}
\crefname{lemma}{Lemma}{Lemma}
\crefname{theorem}{Theorem}{Theorem}
\crefname{proposition}{Proposition}{Proposition}
\crefname{section}{Section}{Section}
\crefname{appendix}{Appendix}{Appendix}
\crefname{remark}{Remark}{Remark}

\newtheorem{theorem}{Theorem}[section]
\newtheorem{lemma}[theorem]{Lemma}

\theoremstyle{definition}

\theoremstyle{remark}

\numberwithin{equation}{section}
\numberwithin{figure}{section}
\numberwithin{table}{section}

\newcommand{\bs}[1]{\boldsymbol{#1}}

\newcommand{\oname}[1]{\mathrm{#1}}
\newcommand{\abs}[1]{\left|#1\right|}
\newcommand{\nrm}[1]{\left|\left|#1\right|\right|}
\newcommand{\eqdef}{\stackrel{\mathrm{def}}{=\joinrel=}}

\newcommand{\phf}[1]{{{#1}+1/2}}

\newcommand{\floor}[1]{\left\lfloor{#1}\right\rfloor}

\newcommand{\cfl}{{\textrm{cfl}}}

\newcommand{\cm}{\textrm{\ding{51}}}
\newcommand{\xm}{\textrm{\ding{55}}}
\newcommand{\hc}{\mathrlap{\cm}{~{\large\backprime}}}

\begin{document}

\title[Stability Barrier of Hermite Type Methods]{The Stability Barrier of A Class of Hermite-Type Discretizations of Advection Equations}

\author[X.~Zeng]{Xianyi Zeng}
\address{Department of Mathematics, Lehigh University, Bethlehem, PA 18015, United States.}
\email[Corresponding author, X.~Zeng]{xyzeng@lehigh.edu}

\date{\today}

\subjclass[2020]{65M12 \and 33C90}

\keywords{
  Linear advection equations;
  Hermite-type methods;
  Hybrid-variable discretization;
  Stability barrier;
  Combinatoric equalities;
  Hermite WENO methods.
}

\begin{abstract}
  In this paper we fully categorize all hybrid-variable (HV) discretizations of linear advection equations according to their stability property and establish the stability barrier of these methods.
  In the HV discretization framework, we find numerical approximations to both cell-averaged solutions and nodal solutions and evolve them in time simultaneously.
  We prove that the HV methods have a similar stability barrier as the classical finite difference schemes, that the semi-discretized HV scheme is stable if and only if it uses a stencil such that the number of unknowns in the upwind direction is precisely that in the downwind direction plus one or two, with only one exception when the two numbers are three and zero, respectively.
  It is also proved that all central HV schemes are neutrally stable, in the sense that the $L^2$-norm of the solution remain uniformly bounded at all times.
  The theoretical predictions are verify by extensive numerical tests.
\end{abstract}

\maketitle

\section{Introduction}
\label{sec:intro}
In this work we establish the precise stability barrier of the recently developed hybrid-variable (HV) discretizations of linear advection equations, a prototype problem for general hyperbolic conservation laws that has wide applications in fluid mechanics, celluar dynamics in tumor growth models, and population dynamics.

Unlike traditional finite-difference or finite-volume schemes, the hybrid-variable methods approximate both the cell-averaged solutions and nodal solutions and evole them simultaneously in time in the context of method of lines.
To this end, the HV schemes are direct generalizations of the Scheme V of van Leer~\cite{BvanLeer:1977b}, which uses both cell averages and nodal values to construct piecewise quadratic representations of the solution.
The construction of HV discretization of arbitrary order utilizes extensively the Hermite interpolation theory and is discussed in detail in~\cite{XZeng:2019a}, which also proved the supraconvergence property of these methods.
To be more precise, the construction of HV schemes builds on approximating the spatial derivative at nodes by a linear combination of nearby cell-averaged and nodal solutions.
Let the stencil of the method be denoted $(L,R)$, meaning $L$ unknowns (combining both types of variables) are used in the upwind direction and $R$ unknowns are used in the downwind direction, previous work proved that the resulting HV scheme has spatial order of accuracy $L+R+1$ provided that it is upwind-biased ($L>R$) and stable.
Note that this spatial order is one higher than the local truncation error ($L+R$), a property known as supraconvergence in the literature~\cite{HOKreiss:1986a}.

Although~\cite{XZeng:2019a} proved supraconvergence for all stable HV schemes, it did not tackle the general stability theory of these methods.
Later on, the author proved using Fourier analysis and combinatoric tools that all HV schemes with $L\gg R$ are unstable~\cite{XZeng:2026a}, indicating the existence of a stability barrier of HV methods, i.e., the number of upwind unknowns cannot be too much larger than that of downwind unknowns for the method to be stable.
The stability barrier has been studied for the classical finite difference schemes, and fully resolved by Iserles and Strang~\cite{AIserles:1982a,AIserles:1983a} using the theory of order stars and also by Despr\'{e}s~\cite{BDespres:2008a,BDespres:2009a} using integral form of the local truncation error.
Neither method, however, extends to the HV methods, as discussed in~\cite[Appendix A]{XZeng:2026a}.

In this work, we employs extensively techniques in the studying of hypergeometric series and their relation to differential equations, and establish the precise stability barrier for HV schemes, and prove that: (1) all HV schemes with $1\le L-R\le 2$ are stable, (2) all HV schemes with $L-R\ge3$ are unstable except when $L=3$ and $R=0$, and (3) all central HV schemes (i.e., $L=R$) are neutrally stable in the sense that the $L^2$-norm of the numerical solution is uniformly bounded at all times.
More precisely, we review the construction of HV schemes in~\cref{sec:prelim}, which also outlines the core techniques used in later proofs.
Particularly, the instability proof when $L-R>2$ is provided next in~\cref{sec:instab}, and the stability proof when $1\le L-R\le2$ is offered in~\cref{sec:stab}.
Next, we prove that all central HV schemes are neutrally stable, in the sense that the $L^2$-norm of the numerical solution is uniformly bounded for all $t>0$; this result is also extended to fully-discretized central HV schemes.
Extensive numerical examples are offered in~\cref{sec:num} to verify the theoretical results proved in previous sections.
And lastly,~\cref{sec:concl} concludes the paper.


\section{Preliminaries}
\label{sec:prelim}
Let us consider the Cauchy problem for the advection equation:
\begin{equation}\label{eq:prelim_adv}
  u_t + u_x = 0\;, 
\end{equation}
on the domain $[0,\,1]$ with periodic boundary condition $u(0,t)=u(1,t)$; and we seek numerical approximations with respect to a uniform grid $x_j=jh$, where $h=1/N>0$ is the grid size.
The discretization of~\cref{eq:prelim_adv} in space by the hybrid-variable (HV) method concerns approximations to both the nodal values $u_j(t)\approx u_j^\ast(t) \eqdef u(x_j,t)$ and cell averages $\overline{u}_{\phf{j}}(t)\approx \overline{u}^\ast_{\phf{j}} \eqdef \frac{1}{h}\int_{x_j}^{x_{j+1}}u(x,t)dx$.
It is understood that $u_j=u_{j+N}$ and $\overline{u}_{\phf{j}}=\overline{u}_{\phf{j+N}}$ for all integer $j$.
Here the asteroid denotes exact values.
An HV scheme is determined by the discrete differential operator (DDO) $[\mathcal{D}_x]$, which is defined by (suppressing the dependence on $t$):
\begin{equation}\label{eq:prelim_ddo}
  [\mathcal{D}_xu]_j \eqdef \frac{1}{h}\sum_{k=-l}^{r-1}\alpha_k\overline{u}_{\phf{j+k}}+\frac{1}{h}\sum_{k=-l'}^{r'}\beta_ku_{j+k}\;.
\end{equation}
The construction of $[\mathcal{D}_xu]_j\approx u_x(x_j)$ uses $l$ cells and $l'$ nodes to the left of $x_j$, and $r$ cells and $r'$ nodes to the right of $x_j$; the quadruple $(l,r,l',r')$ is thus called the stencil of the DDO $[\mathcal{D}_x]$.
We assume a continuous stencil, i.e., $l'\le l\le l'+1$ and $r'\le r\le r'+1$.
An equivalent way to denote such $[\mathcal{D}_x]$ is the pair of indices $(L,R)$, where $L=l+l'$ and $R=r+r'$; one can determine the four-integer index by $l'=\floor{L/2}$, $l=L-l'$, $r'=\floor{R/2}$, and $r=R-r'$.

The DDO~\cref{eq:prelim_ddo} is $p$-th order accurate if for sufficiently smooth $u$, one has:
\begin{displaymath}
  [\mathcal{D}_xu^\ast]_j = u_x(x_j) + c_p\frac{d^{p+1}u(x_j)}{dx^{p+1}}h^p + O(h^{p+1})
\end{displaymath}
for some $c_p\ne0$.
In previous work~\cite{XZeng:2019a} it is shown that given a continuous stencil $(l,r,l',r')$ there exists a unique $[\mathcal{D}_x]$ that has the optimal order $p=l+r+l'+r'=L+R$.
We shall only consider these optimally accurate DDO in the present work.
Once $[\mathcal{D}_x]$ is chosen, the semi-discretization of~\cref{eq:prelim_adv} is thus given by for all $1\le j\le N$:
\begin{equation}\label{eq:prelim_semi}
  \left\{\begin{array}{l}
    \overline{u}_{\phf{j}}' + \frac{1}{h}\left(u_{j+1}-u_j\right) = 0\;, \\ \vspace*{-.05in} \\
    u_j' + [\mathcal{D}_xu]_j = 0\;.
  \end{array}\right.
\end{equation}
Using Fourier analysis, much of the property of the numerical method can be obtained by studying the eigenvalues of this linear system of ordinary differential equations.
Particularly, one can show that (for example, following~\cite[Section 4]{XZeng:2019a}) all eigenvalues of the ODE system~\cref{eq:prelim_semi} lie on the set of eigenvalues (denoted $\mathcal{S}$) of the $2\times2$ matrix:
\begin{equation}\label{eq:prelim_semi_mat}
  \begin{bmatrix}
    0 & e^{i\theta}-1 \\
    G(e^{i\theta}) & H(e^{i\theta})
  \end{bmatrix}\;,\quad\textrm{ where }
  G(z) = \sum_{k=-l}^{r-1}\alpha_kz^k\;,\ \textrm{ and }\  
  H(z) = \sum_{k=-l'}^{r'}\beta_kz^k\;,
\end{equation}
with $\theta$ runs from $0$ to $2\pi$. 
The characteristic polynomial of this matrix is given by:
\begin{equation}\label{eq:prelim_semi_char}
  \lambda^2 - H(e^{i\theta})\lambda - (e^{i\theta}-1)G(e^{i\theta}) = 0\;,
\end{equation}
and the set $\mathcal{S}$ is thus defined as:
\begin{equation}\label{eq:prelim_semi_eigset}
  \mathcal{S} = \{\lambda\in\mathbb{C}:\, \lambda^2-H(e^{i\theta})\lambda-F(e^{i\theta}) = 0\ \textrm{ for some }\ 0\le\theta\le2\pi\},
\end{equation}
where $F(e^{i\theta})=(e^{i\theta}-1)G(e^{i\theta})$.
Physically, $\theta=\kappa h$ where $\kappa$ represents the wave number.

It was proved that given sufficiently smooth initial data, the semi-discretized solution converges to the actual one pointwise at $(p+1)$-th order of accuracy if: (1) the DDO $[\mathcal{D}_x]$ is $p$-th order, (2) the stencil is biased towards the upwind direction, or $L>R$, and (3) the ODE system is stable (to be made precise later).
Here the first condition is known as {\it supraconvergence}, as the order of the method is higher than that of the local truncation error.
The upwind-biased condition, despite its similarity to that of finite-difference method, is not due to stability considerations but a necessary condition for the second root of~\cref{eq:prelim_semi_char} to decay exponentially fast to zero as $h\to0$ or $\theta\to0$.
This paper focuses on the third condition and investigates the stability given an upwind biased stencil ($L>R$).

When saying the linear ODE system~\cref{eq:prelim_semi} is stable, we mean all its eigenvalues have negative real parts except at most a simple one at zero.
Note that these eigenvalues are precisely $-\lambda(\theta)$, where $\lambda$ is a root of~\cref{eq:prelim_semi_char}; and it is easy to compute when $\theta=0$, the two roots of~\cref{eq:prelim_semi_char} are $0$ and $H(1)=\sum_{k=-l'}^{r'}\beta_k>0$\footnote{The two conditions $H(1)>0$ and $L>R$ are equivalent, see~\cite[Proposition 5]{XZeng:2019a}.}, it is not difficult to see that stability for all admissible combinations of wavenumber, domain size, and number of cells is equivalent to say $\mathcal{S}'$ is contained in the open right complex plane $\mathbb{C}^+=\{z:\,\oname{Re}z>0\}$, where:
\begin{equation}\label{eq:prelim_semi_traj}
  \mathcal{S}' = \{\lambda\in\mathbb{C}:\, \lambda^2-H(e^{i\theta})\lambda-F(e^{i\theta}) = 0\ \textrm{ for some }\ 0<\theta<2\pi\}.
\end{equation}

A sufficient and necessary condition for $\mathcal{S}'\subseteq\mathbb{C}^+$ is given in previous work~\cite{XZeng:2026a}:
\begin{lemma}\label{lm:prelim_stab}
  The set $\mathcal{S}'$ is contained in $\mathbb{C}^+$ if and only if for all $0<\theta<2\pi$, the following two conditions are satisfied:
  \begin{subequations}\label{eq:prelim_stab_cond}
    \begin{align}
      \label{eq:prelim_stab_cond_h}
      \oname{Re}H > 0\;, \\
      \label{eq:prelim_stab_cond_g}
      \oname{Re}H\,\oname{Re}(\overline{H}F)+(\oname{Im}F)^2 < 0\;.
    \end{align}
  \end{subequations}
  Here $H=H(e^{i\theta})$ and $F=F(e^{i\theta})$.
\end{lemma}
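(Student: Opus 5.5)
For fixed $\theta$, the condition $\mathcal{S}'\subseteq\mathbb{C}^+$ says that both roots of $p(\lambda)=\lambda^2-H\lambda-F$ have positive real part. Equivalently, both roots of $q(\mu)=p(-\mu)=\mu^2+H\mu-F$ lie in the open left half-plane. Since $H$ and $F$ are complex, I would apply the complex Routh--Hurwitz criterion (Hermite--Biehler form) to the monic quadratic $\mu^2+a\mu+b$ with $a=H$ and $b=-F$. The statement then reduces to the elementary claim that, for complex $a,b$, both roots of $\mu^2+a\mu+b$ have negative real part if and only if
\begin{displaymath}
  \oname{Re}a>0 \quad\textrm{and}\quad \oname{Re}a\,\oname{Re}(\overline{a}b)-(\oname{Im}b)^2>0 .
\end{displaymath}
Substituting $a=H$ and $b=-F$ gives exactly \cref{eq:prelim_stab_cond_h} and \cref{eq:prelim_stab_cond_g}, with $\oname{Re}(\overline{a}b)=-\oname{Re}(\overline{H}F)$ and $(\oname{Im}b)^2=(\oname{Im}F)^2$. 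Because this holds pointwise in $\theta$, the lemma follows by quantifying over $0<\theta<2\pi$.

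I would prove the quadratic claim directly rather than cite it.

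\textbf{Necessity of $\oname{Re}a>0$.} Write the roots as $\mu_1,\mu_2$. Then $\mu_1+\mu_2=-a$, so if both real parts are negative, $\oname{Re}a=-\oname{Re}(\mu_1+\mu_2)>0$.

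\textbf{Location of the roots via a shift.} Assume $\oname{Re}a>0$. Put $\mu=\nu-a/2$, which yields $\nu^2=D:=a^2/4-b$, so $\mu_{1,2}=-a/2\pm\sqrt{D}$. Both roots have negative real part iff $\abs{\oname{Re}\sqrt{D}}<\oname{Re}a/2$.

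\textbf{Computing $\oname{Re}\sqrt{D}$.} Write $\alpha=\oname{Re}a/2>0$ and $\sqrt{D}=x+iy$. Then $x^2-y^2=\oname{Re}D$ and $2xy=\oname{Im}D$. The condition $x^2<\alpha^2$ is equivalent to saying that the hyperbola $\{x^2-y^2=\oname{Re}D,\ 2xy=\oname{Im}D\}$ meets only the strip $\abs{x}<\alpha$. Eliminating $y$ gives $x^4-\oname{Re}D\,x^2-(\oname{Im}D)^2/4=0$, so $x^2$ is the unique nonnegative root of $s^2-\oname{Re}D\,s-(\oname{Im}D)^2/4$. This increasing-root quadratic satisfies $x^2<\alpha^2$ iff its value at $s=\alpha^2$ is positive, that is,
\begin{displaymath}
  \alpha^4-\oname{Re}D\,\alpha^2-(\oname{Im}D)^2/4>0 .
\end{displaymath}

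\textbf{Expanding in terms of $a$ and $b$.} Write $a=2\alpha+2i\beta$ and set $b=b_r+ib_i$. Then $D=(\alpha+i\beta)^2-b$, so $\oname{Re}D=\alpha^2-\beta^2-b_r$ and $\oname{Im}D=2\alpha\beta-b_i$. Substituting, the inequality becomes
\begin{displaymath}
  \alpha^2\beta^2+\alpha^2b_r-(\alpha\beta-b_i/2)^2>0 ,
\end{displaymath}
which simplifies to $\alpha^2b_r+\alpha\beta b_i-b_i^2/4>0$. Multiplying by $4$ gives $\oname{Re}a\,(\oname{Re}a\,b_r+\oname{Im}a\,b_i)-b_i^2>0$, i.e. $\oname{Re}a\,\oname{Re}(\overline{a}b)-(\oname{Im}b)^2>0$, the claimed condition.

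The main obstacle is the careful bookkeeping of signs in this last expansion and in the change $b=-F$. I would double-check the result on the real case $a,b>0$, where the criterion must reduce to $b>0$ (indeed it gives $a^2b>0$), and on the purely imaginary perturbation $b=i\epsilon$. One more point needs care: strictness. The boundary cases correspond to a root on the imaginary axis, and these are exactly the cases excluded by requiring strict inequalities for every $0<\theta<2\pi$.
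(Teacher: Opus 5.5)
Your proof is correct. The paper itself gives no proof of this lemma; it quotes it from the author's earlier work. So your argument is a self-contained replacement rather than a variant of an in-paper proof. Your route is the elementary proof of the complex Routh--Hurwitz (Hermite--Biehler) criterion for quadratics:
\begin{itemize}
\item pass to $\mu=-\lambda$;
\item complete the square to get $\mu_{1,2}=-a/2\pm\sqrt{D}$;
\item reduce to $\abs{\oname{Re}\sqrt{D}}<\oname{Re}a/2$;
\item locate $(\oname{Re}\sqrt{D})^2$ as a root of $g(s)=s^2-\oname{Re}D\,s-(\oname{Im}D)^2/4$.
\end{itemize}
I checked the expansion $g(\alpha^2)=\alpha^2b_r+\alpha\beta b_i-b_i^2/4$ and the substitution $a=H$, $b=-F$; they land exactly on \cref{eq:prelim_stab_cond_h} and \cref{eq:prelim_stab_cond_g}.

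A side benefit is that your derivation pins down $\oname{Re}(\overline{H}F)=h_rf_r+h_if_i$. Hence $q=h_r(h_rf_r+h_if_i)+f_i^2$ with $f_i=\oname{Im}F$, which is the form actually used in \cref{sec:instab_f} and \cref{sec:stab_f}. The restatements \cref{eq:prelim_stab_parts} and \cref{eq:prelim_stab_cond_equiv} typeset both of these incorrectly.

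Two small points should be tightened.

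\textbf{The nonnegative root.} $x^2$ is not always the \emph{unique} nonnegative root of $g$. If $\oname{Im}D=0<\oname{Re}D$, the roots are $0$ and $\oname{Re}D$. What you actually use is this: $x^2=(\abs{D}+\oname{Re}D)/2$ is the larger root, and the smaller root $(\oname{Re}D-\abs{D})/2$ satisfies $(\oname{Re}D-\abs{D})/2\le 0<\alpha^2$. That fact, together with $\alpha>0$, is what makes ``$g(\alpha^2)>0$ if and only if $x^2<\alpha^2$'' valid in both directions. Relatedly, the set $\{x^2-y^2=\oname{Re}D,\ 2xy=\oname{Im}D\}$ is just the two points $\pm\sqrt{D}$, not a hyperbola.

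\textbf{Strictness.} Your closing remark on strictness is unnecessary. Your pointwise equivalence already relates strict conditions on both sides: the open left half-plane on one side, strict inequalities on the other. Moreover, $\mathcal{S}'\subseteq\mathbb{C}^+$ means precisely that for each $\theta\in(0,2\pi)$ both roots lie in $\mathbb{C}^+$. So quantifying over $\theta$ finishes the proof with no separate boundary analysis.
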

Throughout the paper we will write:
\begin{equation}\label{eq:prelim_stab_parts}
  h_r(\theta) = \oname{Re}H(e^{i\theta})\;,\quad
  h_i(\theta) = \oname{Im}H(e^{i\theta})\;,\quad
  f_r(\theta) = \oname{Re}F(e^{i\theta})\;,\quad
  f_i(\theta) = \oname{Re}F(e^{i\theta})\;,
\end{equation}
then the two conditions~\cref{eq:prelim_stab_cond} become:
\begin{equation}\label{eq:prelim_stab_cond_equiv}
  h_r > 0\;,\quad q(\theta)\eqdef h_r(h_rh_i+f_rf_i)+f_i^2 < 0\;,\quad\forall 0<\theta<2\pi\;.
\end{equation}

The question about the stability barrier concerns whether a DDO $[\mathcal{D}_x]$ with optimal order of accuracy and a stencil such that $L>R$ will be stable.
This paper establishes the precise stability barrier for HV methods. 
We begin with recalling that the $\alpha$- and $\beta$-coefficients can be obtained using Hermite interpolation polynomials, see also~\cite[Theorem 1]{XZeng:2019a}:
\begin{lemma}\label{lm:prelim_coef}
  The unique DDO $[\mathcal{D}_x]$ with stencil $(l,r,l',r')$ and optimal order of accuracy $p=l+r+l'+r'$ is given by~\cref{eq:prelim_ddo} and:
  \begin{subequations}\label{eq:prelim_coef}
    \begin{align}
      \label{eq:prelim_coef_cell_neg}
      \alpha_\nu &= -(1-\delta_{ll'})\frac{2}{l^2}C^{l,r}_{-l}C^{l,r'}_{-l}-\sum_{k=-l'}^\nu\frac{2(1+k(\zeta_k^{l,r}+\zeta_k^{l',r'}))}{k^2}C_k^{l,r}C_k^{l',r'},\ \forall -l\le\nu<0\;; \\
      \label{eq:prelim_coef_cell_pos}
      \alpha_\nu &= \sum_{k=\nu+1}^{r'}\frac{2(1+k(\zeta_k^{l,r}+\zeta_k^{l',r'}))}{k^2}C_k^{l,r}C_k^{l',r'}+(1-\delta_{rr'})\frac{2}{r^2}C^{l,r}_{r}C^{l',r}_{r},\ \forall 0\le\nu<r\;; \\
      \label{eq:prelim_coef_node} 
      \beta_0 &= 2(\zeta_0^{l,r}+\zeta_0^{l',r'})\;;\quad
      \beta_\nu = -\frac{2}{\nu}C^{l,r}_{\nu}C^{l',r'}_{\nu}\;,\ \forall -l'\le\nu\le r',\ \nu\ne0\;.
    \end{align}
  \end{subequations}
  Here $\delta_{ab}$ is Kronecker delta, $C^{m,n}_k=\frac{m!n!}{(m+k)!(n-k)!}$, and $\zeta_k^{m,n} = H_{m+k}-H_{n-k}$ for all $-m\le k\le n$, where $H_k=1+\cdots+1/k$ is the $k$-th Harmonic number and it is understood that $H_0=0$.
\end{lemma}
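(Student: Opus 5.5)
The plan is to reduce the construction of $[\mathcal{D}_x]$ to Hermite interpolation of a primitive function, and then read off the coefficients from explicit Hermite basis functions. By translation invariance we may take $j=0$, and by scaling we may take $h=1$. Optimal order $p=L+R$ means that~\cref{eq:prelim_ddo} is exact for all polynomials $u$ of degree $\le p+1$. Uniqueness of such a functional is already given by the cited result from~\cite{XZeng:2019a}, so it suffices to exhibit coefficients with this exactness.

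\textbf{Step 1: pass to the primitive.} Let $U$ be a primitive of $u$. Then
\begin{displaymath}
  \overline{u}_{\phf{k}}=U(k+1)-U(k)\;,\qquad u_k=U'(k)\;,\qquad u_x(0)=U''(0)\;.
\end{displaymath}
So the cell data give the values of $U$ at the nodes $-l,\dots,r$, up to a common additive constant, and the nodal data give $U'$ at the nodes $-l',\dots,r'$. The number of conditions is $(l+r+1)+(l'+r'+1)=p+2$. Because the stencil is continuous, each node carrying derivative data also carries value data, so Hermite interpolation with these data is unisolvent in degree $p+1$. Let $P$ be this interpolant. Then $U''(0)=P''(0)$ exactly whenever $\deg U\le p+1$.

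\textbf{Step 2: the Hermite basis.} Write $\ell_k$ and $\tilde\ell_k$ for the Lagrange bases on $\{-l,\dots,r\}$ and $\{-l',\dots,r'\}$.
\begin{itemize}
  \item At a double node $k$ the derivative basis function is $B_k(x)=(x-k)\ell_k(x)\tilde\ell_k(x)$.
  \item At a double node $k$ the value basis function is $A_k(x)=\bigl(1-(x-k)(\ell_k'(k)+\tilde\ell_k'(k))\bigr)\ell_k\tilde\ell_k$.
  \item At the possible single nodes $k=-l$ (when $l=l'+1$) and $k=r$ (when $r=r'+1$), the value basis function is $\ell_k\tilde\omega/\tilde\omega(k)$, where $\tilde\omega=\prod_{i=-l'}^{r'}(x-i)$.
\end{itemize}
We have $\ell_k'(k)=\sum_{i\ne k}1/(k-i)=H_{m+k}-H_{n-k}=\zeta_k^{m,n}$, and $\ell_k'(0)$ is expressible through $C^{m,n}_k/k$, up to sign, for $k\ne0$.

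\textbf{Step 3: second derivatives at $0$.} Since $0$ is a node of both sets, $\ell_k(0)=\tilde\ell_k(0)=0$ for $k\ne0$. Hence $B_k''(0)=-2k\,\ell_k'(0)\tilde\ell_k'(0)$, which should give $\beta_k=-\tfrac{2}{k}C_kC'_k$. For $k=0$ one computes $B_0''(0)=2(\ell_0'(0)+\tilde\ell_0'(0))$, which gives $\beta_0$. In the same way, $A_k''(0)$ should come out as $2(1+k(\zeta_k+\zeta'_k))C_kC'_k/k^2$ up to sign, and the single-node basis functions should contribute the $2C C/l^2$ and $2CC/r^2$ terms.

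\textbf{Step 4: summation by parts.} Call the resulting weights on $U(k)$ by $a_k$. They sum to zero, since constants are annihilated. Hence $\sum_k a_kU(k)=\sum_\nu \alpha_\nu(U(\nu+1)-U(\nu))$, where $\alpha_\nu$ is a partial sum of the $a_k$. For $\nu<0$ we sum from the left, and for $\nu\ge0$ we sum from the right. This reproduces the cumulative sums in~\cref{eq:prelim_coef_cell_neg,eq:prelim_coef_cell_pos}.

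I expect the main obstacle to be the bookkeeping in Steps 3 and 4. Specifically, I will have to get the signs and factorial identities right that turn $\ell_k'(0)$ and the single-node weights into the stated $C^{m,n}_k$ forms. The $k=0$ value weight, which is not needed once summation by parts is applied, also has to be handled with care.
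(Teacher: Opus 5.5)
Your proposal is correct and follows essentially the route the paper relies on (via its citation of the Hermite-interpolation construction in the earlier work). Hermite-interpolate the primitive $U$, with values on $\{-l,\dots,r\}$ and derivatives on $\{-l',\dots,r'\}$. Carrying out your Step 3 gives $B_k''(0)=\beta_k$, $A_k''(0)=t_k$ with a plus sign, and single-node weights exactly equal to $t_{-l}$ and $t_r$, after which Step 4 reproduces the stated cumulative sums.

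One correction to your framing: optimal order $p$ means exactness for $u$ of degree $\le p$, not $\le p+1$, since exactness at degree $p+1$ would contradict $c_p\ne0$. Equivalently, $U$ has degree $\le p+1$, which is what your Step 1 actually uses.
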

It is convenient to rewrite the $\alpha$-coefficients as:
\begin{align}
  \label{eq:prelim_coef_alpha_sum}
  &\alpha_\nu = -\sum_{k=-l}^\nu t_k\;,\quad -l\le\nu<0\;;\qquad \alpha_\nu = \sum_{k=\nu+1}^r t_k\;,\quad 0\le\nu<r\;; \\ 
  \label{eq:prelim_coef_t_neg}
  & \left\{\begin{array}{lcl}
    \textrm{If } l'=l: & & t_k = \frac{2(1+k(\zeta_k^{l,r}+\zeta_k^{l',r'}))}{k^2}C_k^{l,r}C_k^{l',r'}\;,\quad -l\le k\le -1\;, \\
    \textrm{If } l'=l-1: & & t_k = \left\{\begin{array}{lcl}
      \frac{2(1+k(\zeta_k^{l,r}+\zeta_k^{l',r'}))}{k^2}C_k^{l,r}C_k^{l',r'}\;, & & -(l-1)\le k\le -1\;, \\
      \frac{2}{l^2}C_{-l}^{l,r}C_{-l}^{l,r'}\;, & & k=-l\;.
    \end{array}\right.
  \end{array}\right. \\
  \label{eq:prelim_coef_t_pos}
  & \left\{\begin{array}{lcl}
    \textrm{If } r'=r: & & t_k = \frac{2(1+k(\zeta_k^{l,r}+\zeta_k^{l',r'}))}{k^2}C_k^{l,r}C_k^{l',r'}\;,\quad 1\le k\le r\;, \\
    \textrm{If } r'=r-1: & & t_k = \left\{\begin{array}{lcl}
      \frac{2(1+k(\zeta_k^{l,r}+\zeta_k^{l',r'}))}{k^2}C_k^{l,r}C_k^{l',r'}\;, & & 1\le k\le r-1\;, \\
      \frac{2}{r^2}C_{r}^{l,r}C_{r}^{l',r}\;, & & k=r\;.
    \end{array}\right.
  \end{array}\right. 
\end{align}
Using this notation, one can write:
\begin{equation}\label{eq:prelim_f_t}
  F(e^{i\theta}) = \sum_{k=-l,k\ne0}^rt_k(e^{ik\theta}-1)\;.
\end{equation}

The stability of HV methods with stencil $(L,R)$, $0\le R<L\le8$ are checked and listed in~\cref{tb:prelim_stab}.
\begin{table}\centering
  \caption{HV methods: $\cm$ stable, $\hc$ unstable but satisfies~\cref{eq:prelim_stab_cond_h}, $\xm$ unstable and violates~\cref{eq:prelim_stab_cond_h}.}
  \label{tb:prelim_stab}
  \begin{tabular}{@{}lllllllll@{}}
    \toprule[.5mm]
        & \multicolumn{7}{c}{$R$} \\ \cmidrule[.2mm]{2-9}
    $L$ & 0     & 1     & 2     & 3     & 4     & 5     & 6     & 7     \\ \cmidrule[.3mm]{1-9}
    1   & $\cm$ &       \\
    2   & $\cm$ & $\cm$ &     \\
    3   & $\cm$ & $\cm$ & $\cm$ &       \\
    4   & $\xm$ & $\hc$ & $\cm$ & $\cm$ &       \\
    5   & $\xm$ & $\xm$ & $\hc$ & $\cm$ & $\cm$ &       \\
    6   & $\xm$ & $\xm$ & $\xm$ & $\hc$ & $\cm$ & $\cm$ &       \\
    7   & $\xm$ & $\xm$ & $\xm$ & $\xm$ & $\hc$ & $\cm$ & $\cm$ &       \\
    8   & $\xm$ & $\xm$ & $\xm$ & $\xm$ & $\xm$ & $\hc$ & $\cm$ & $\cm$ \\
    \bottomrule[.4mm]
  \end{tabular}
\end{table}
The main result of this paper is given below:
\begin{theorem}\label{thm:prelim_sb}
  The semi-discretized HV method is stable if and only if $0\le R < L < R+3$ except $(L,R)=(3,0)$, when the method is stable. 
\end{theorem}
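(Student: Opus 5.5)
The theorem splits into two directions, both reduced to \cref{lm:prelim_stab}, i.e.\ to the sign conditions \cref{eq:prelim_stab_cond_equiv} on $h_r$ and $q(\theta)$ for $0<\theta<2\pi$. The first step is to get usable closed forms for $H(e^{i\theta})$ and $F(e^{i\theta})$ from \cref{lm:prelim_coef}. The coefficients $\beta_\nu=-\frac{2}{\nu}C^{l,r}_\nu C^{l',r'}_\nu$ are products of ratios of factorials, so $H(z)$ and $F(z)=\sum_k t_k(z^k-1)$ are (truncated) hypergeometric sums. I would identify them with terminating ${}_3F_2$/${}_2F_1$-type series in $z$. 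These satisfy a Fuchsian differential equation, and the real parts $h_r$, $f_r$ should then have a factorized form
\begin{equation*}
h_r(\theta)=c_{L,R}\,(1-\cos\theta)^{m}\,P_{L,R}(\cos\theta),
\end{equation*}
with $m$ tied to the order $p=L+R$. Such a factorization is forced by the accuracy conditions: $H(e^{i\theta})\lambda+F$ must reproduce $i\theta$ to order $p$. The same reasoning applies to $q(\theta)$. Accuracy forces $q(\theta)=O(\theta^{2(p+1)})$-type vanishing at $\theta=0$, so the whole question reduces to the sign of a leading coefficient plus the sign of a residual polynomial in $s=\sin^2(\theta/2)$.

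\textbf{Instability for $L-R\ge 3$, $(L,R)\ne(3,0)$.} I would show that either $h_r$ or $q$ changes sign near $\theta=0$, or at $\theta=\pi$. Expanding at $\theta\to0$, the first nonvanishing term of $q(\theta)$ is $\kappa_{L,R}\,\theta^{2p+2}$ or similar. Here $\kappa_{L,R}$ is computable from the error constant $c_p$ of the DDO and from $H(1)$, and I expect its sign to equal the sign of a simple expression such as $(L-R-1)(L-R-2)\cdot(\ldots)$, which is positive once $L-R\ge3$. The exceptional case $(3,0)$ must be where this leading term is still negative, so I would check whether the factor vanishes or flips there and, if it degenerates, examine the next-order term or simply verify $(3,0)$ explicitly. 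The table shows that for $L-R\ge4$ even $h_r>0$ fails, which is consistent with the sign of the lowest-order term of $h_r$ at $\theta\to0$ or $\theta=\pi$. The case $L-R=3$ is the $\hc$ column, where $h_r>0$ but $q>0$ somewhere, and this is exactly where the local expansion of $q$ is needed.

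\textbf{Stability for $L-R\in\{1,2\}$.} Here a local argument is insufficient, and I would prove $h_r>0$ and $q<0$ globally. The plan is to write $h_r$ and $q$ as polynomials in $s=\sin^2(\theta/2)\in(0,1]$ and show all coefficients have a fixed sign, or to express them as integrals of nonnegative kernels. The Euler integral representation of the hypergeometric sums, i.e.\ a Beta-integral form of $C^{m,n}_k$, would give, e.g., $h_r=\int_0^1 w(x)\,|1-xe^{i\theta}|^{2}\cdots dx$ with $w\ge0$ exactly when $L-R\le2$. The combinatorial identities needed are Chu--Vandermonde/Saalsch\"utz-type summations of the harmonic-number-weighted terms, handled by differentiating the Beta integrals in a parameter.

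The main obstacle is establishing $q<0$ on all of $(0,2\pi)$ in the stable cases. $q$ is a quartic combination of $H$ and $F$ with no obvious positivity structure. The first thing I would try is to find an identity $q=-|A(e^{i\theta})|^2\,\cdot$(explicit positive factor) with $A$ a hypergeometric polynomial, guided by computing $q$ symbolically for small $(L,R)$ in \cref{tb:prelim_stab}.
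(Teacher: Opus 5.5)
Your proposal has a genuine gap in each direction.

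\textbf{Instability for $L-R\ge4$.} Everything you propose is local: Taylor data at $\theta=0$, or values at $\theta=\pi$. That cannot capture these instabilities in general.

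First, the factorization $h_r=c_{L,R}(1-\cos\theta)^mP_{L,R}(\cos\theta)$ with $m$ tied to $L+R$ is false. Indeed $h_r(0)=H(1)=[\beta]_0>0$ whenever $L>R$. Accuracy constrains $E=\theta^2+i\theta H+F$, not $h_r$.

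Second, the leading coefficient of $q$ at $\theta=0$ does not have a fixed sign once $L-R\ge3$. The moment identities obtained from the partial fractions of $P(x)$ give $E=i^{d}\frac{2\,l!\,r!\,l'!\,r'!}{(L+R+2)!}\theta^{L+R+2}+O(\theta^{L+R+3})$ with $d=L-R$. One also has identically $q=h_r^2\,\mathrm{Re}E+h_r(h_i-2\theta)\,\mathrm{Im}E+(\mathrm{Im}E)^2$. So for even $d$ the leading term of $q$ is $(-1)^{d/2}[\beta]_0^2\frac{2\,l!\,r!\,l'!\,r'!}{(L+R+2)!}\theta^{L+R+2}$, which is negative for $d=6,10,14,\dots$. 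There is no sign pattern like $(L-R-1)(L-R-2)(\cdots)$.

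For example, take $(L,R)=(10,0)$. There $h_r>0$ and $q<0$ near $\theta=0$, $h_r(\pi)=137/30>0$, and $q(\pi)=h_r(\pi)^2f_r(\pi)<0$. Yet $h_r(2\pi/5)<0$, so the scheme is unstable.

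What is missing is a global representation of $h_r$. The paper writes $h_r(\theta)=\sum_k\mathscr{B}(k)e^{ik\theta}$ with $\mathscr{B}(x)=(\mathscr{A}(-x)-\mathscr{A}(x))/x$, where $\mathscr{A}$ is a product of two reciprocal Beta functions. It computes $\hat{\mathscr{B}}(t)=2A\int_{\min(|t|,1)}^1\sin(p\pi t')\mathscr{D}(t')\,dt'$, with $\mathscr{D}>0$ even and decreasing on $[0,1]$. Thus $h_r$ is the $1$-periodization of $\hat{\mathscr{B}}$. At $\theta^*=2\pi/p$ this becomes an alternating sum of decreasing positive terms. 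Stencils with $l-l'\ne r-r'$ are reduced to this case through an auxiliary stencil.

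Your local expansion is what the paper uses only for $L-R=3$, where $h_r>0$ and only $q$ can fail. There the paper shows, via those moment identities, that the $\theta^{2R+6}$ coefficient of $q$ is positive for $R\ge1$. That coefficient vanishes at $(3,0)$, a degeneracy your plan allowed for.

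\textbf{Stability for $1\le L-R\le2$.} A Beta-integral representation is indeed how the paper gets $h_r>0$. You also need $h_r'<0$ on $(0,\pi)$, which the same representation yields and which is used later.

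For $q<0$, however, you only propose to look for a sum-of-squares identity or a sign-definite polynomial in $\sin^2(\theta/2)$. You give no reason to expect either. This is the core of the stability proof, and it is left open.

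The paper's mechanism is an ODE argument. Take the stencil $(r+1,r,r,r)$. The three-term recurrences of the coefficients (e.g.\ $(r+2+k)(r+1+k)Z_{k+1}=(r+1-k)(r-k)Z_k$, and an inhomogeneous analogue for $Y_k$) give $h_r''-(2r+1)\cot\frac{\theta}{2}h_r'-r(r+1)h_r=0$. The same operator applied to $f_i$ equals $(2r+1)\cot\frac{\theta}{2}h_r-2h_r'$.

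Eliminating yields $h_r^2(1+m')=C_2\sin^{2(2r+1)}\frac{\theta}{2}$ for $m=f_i/h_r$. The condition $m(0)=m(\pi)=0$ forces $C_2>0$. Combined with $h_i'=2-(r+1)h_r$ and $f_r'=(r+1)f_i+h_i$, the function $S=q/h_r^2$ satisfies $S(0)=0$ and $S'=(1+m')(2m+h_i)$.

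Finally, $2m+h_i<0$ on $(0,\pi)$: it vanishes at both ends, and its derivative $2C_2\sin^{2(2r+1)}\frac{\theta}{2}/h_r^2-(r+1)h_r$ is increasing. This is where $h_r'<0$ enters. The stencil $(r,r,r,r-1)$ is treated the same way. The remaining two stencils are reduced to these through explicit relations between their $h_r,h_i,f_i$ and those of an auxiliary stencil.
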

This result is very similar to that of the finite-difference method in the literature: an optimally accurate finite-difference method is stable if and only if $R<L<R+3$.
It was first proved using the theory of order stars~\cite{AIserles:1982a,AIserles:1983a} and then by a more elementary method analyzing the integral form of an error function~\cite{BDespres:2009a}; however, neither strategy seems to work in the case of HV methods, as discussed in the previous work.
We adopt a different approach in this paper.

It should be noted that finite-difference methods with central stencil are sometimes considered as stable methods as they preserve the square sum of nodal solutions.
We will prove at the end of the paper a similar result for central HV methods (i.e., $L=R$).

Lastly, the stability and instability proofs in the rest of paper involves a variety of techniques, which are summarized in~\cref{tb:prelim_org}.
\begin{table}\centering
  \caption{Roadmap to stability/instability proofs.
  Note the second row excludes $(L,R)=(3,0)$.}
  \label{tb:prelim_org}
  \begin{tabular}{@{}lcccc@{}}
    \toprule[.5mm]
    & Int. form of $h_r$. & Int. form of $g/h_r^2$. & $g$ when $\theta\approx0$. & Spectral analysis. \\ \cmidrule[.2mm]{2-5}
    $L\!-\!R\!>\!3$: \cref{eq:prelim_stab_cond_h}$\xm$ & \cref{sec:instab_h} \\
    $L\!-\!R\!=\!3$: \cref{eq:prelim_stab_cond_g}$\xm$ & & & \cref{sec:instab_f} \\
    $L\!-\!R\!=\!1,2$: \cref{eq:prelim_stab_cond_h}$\cm$ & \cref{sec:stab_h} \\
    $L\!-\!R\!=\!1,2$: \cref{eq:prelim_stab_cond_g}$\cm$ & & \cref{sec:stab_f} \\ 
    $L-R=0$. & & & & \cref{sec:l2} \\
    \bottomrule[.4mm]
  \end{tabular}
\end{table}
As we can see from the table, most proofs are standalone except for the two involving $h_r(\theta)$. 
The integral representation of $h_r$ will be given first in~\cref{sec:instab_int} for general stencils, and it is used in~\cref{sec:instab_h} and~\cref{sec:stab_h} given the stencils $L-R=3$ and $L-R=1,2$, respectively.

\section{Instability proof when $L-R>2$}
\label{sec:instab}
We prove the instabilities first as the derivations are cleaner.
We begin with an integral representation of $h_r(\theta)=\oname{Re}H(e^{i\theta})$ for general stencils, then use it to show $h_r(\theta^\ast)<0$ when $L-R>3$ for a carefully chosen $\theta^\ast$ motivated by the previous work~\cite{XZeng:2026a}, and lastly prove $q(\theta)>0$ for small $\theta>0$ when $L-R=3$ and $R\ne0$.

\subsection{An integral representation of $h_r$}
\label{sec:instab_int}
We compute an integral form of $h_r(\theta)$ by exploring its connection with the Beta functions.
Let $(l,r,l',r')$ be fixed such that $L\ge R$, then:
\begin{align}
  \notag
  h_r(\theta) &= \sum_{k=-l'}^{r'}\beta_k\cos{k\theta} = \beta_0 + \sum_{k=1}^{l'}\beta_{-k}\cos{k\theta} + \sum_{k=1}^{r'}\beta_k\cos{k\theta} \\
  \label{eq:instab_int_re}
  &= 2(\zeta_0^{l,r}+\zeta_0^{l',r'})+\sum_{k=1}^{l'}\frac{2}{k}\left(C^{l,r}_{-k}C^{l',r'}_{-k}-C^{l,r}_kC^{l',r'}_k\right)\cos{k\theta}\;,
\end{align}
here we adopted the convention:
\begin{equation}\label{eq:instab_int_c_conv}
  C^{m,n}_k = 0\;,\quad \textrm{ if } k < -m \textrm{ or } k > n\;.
\end{equation}

In preparation, we define for all $m,n\ge0$, 
\begin{equation}\label{eq:instab_int_c}
  \mathscr{C}^{m,n}(x) = \frac{\Gamma(m+1)\Gamma(n+1)}{\Gamma(m+x+1)\Gamma(n-x+1)}\;,
\end{equation}
where $\Gamma$ is the gamma function.
Then it is easy to verify that (1) $\mathscr{C}^{m,n}(x)$ is entire, i.e., analytic at all $x\in\mathbb{C}$, (2) for all $k\in\mathbb{Z}$, $\mathscr{C}(k) = C^{m,n}_k$ (note that if $k>n$ or $k<-m$, it is a pole of the denominator and thus a zero of $\mathscr{C}^{m,n}$). 

Denote by $\mathcal{L}=\frac{d}{dx}\ln$ the logarithmic derivative, one can compute:
\begin{displaymath}
  \mathcal{L}\mathscr{C}^{m,n}(x) = -\mathcal{L}\Gamma(m+x+1)-\mathcal{L}\Gamma(n-x+1) = -\psi(m+x+1)+\psi(n-x+1)\;,
\end{displaymath}
where $\psi$ is the digamma function.
Therefore for all $-m\le k\le n$:
\begin{equation}\label{eq:instab_int_dc}
  \mathcal{L}\mathscr{C}^{m,n}(k) = H_{n-k}-H_{m+k} = -\zeta^{m,n}_k\;.
\end{equation}

With these preparations, we define the entire function:
\begin{equation}\label{eq:instab_int_a}
  \mathscr{A}(x) = \mathscr{C}^{l,r}(x)\mathscr{C}^{l',r'}(x)\;,
\end{equation}
then there is:
\begin{align}
  \label{eq:instab_int_a_val}
  &\mathscr{A}(k) = C^{l,r}_kC^{l',r'}_k\,,\quad \forall k\in\mathbb{Z}\,; \\
  \label{eq:instab_int_a_der}
  &\mathcal{L}\mathscr{A}(k) = \mathcal{L}\mathscr{C}^{l,r}(k) + \mathcal{L}\mathscr{C}^{l',r'}(k) = - \zeta^{l,r}_k-\zeta^{l',r'}_k\,,\quad \forall -l'\le k\le r'\;,
\end{align}
and therefore:
\begin{equation}\label{eq:instab_int_hr_a}
  h_r(\theta) = -2\mathcal{L}\mathscr{A}(0) + \sum_{k=1}^{l'}\frac{2\left[\mathscr{A}(-k)-\mathscr{A}(k)\right]}{k}\cos{k\theta}\;.
\end{equation}
To this end, we define
\begin{equation}\label{eq:instab_int_b}
  \mathscr{B}(x) = \left\{\begin{array}{lcl}
    \frac{\mathscr{A}(-x)-\mathscr{A}(x)}{x}\;, & & x\ne0\;, \\
    -2\mathcal{L}\mathscr{A}(0)\;, & & x=0\;.
  \end{array}\right.
\end{equation}
Then $\mathscr{B}$ is entire and it has a removable singularity at $x=0$:
\begin{displaymath}
  \lim_{x\to0}\frac{\mathscr{A}(-x)-\mathscr{A}(x)}{x} = -2\mathscr{A}'(0) = -2\mathscr{A}(0)\mathcal{L}\mathscr{A}(0) = -2\mathcal{L}\mathscr{A}(0)\;.
\end{displaymath}
Hence~\cref{eq:instab_int_hr_a} becomes:
\begin{equation}\label{eq:instab_int_hr_b}
  h_r(\theta) = \mathscr{B}(0)+2\sum_{k=1}^{l'}\mathscr{B}(k)\cos{k\theta} = \sum_{k\in\mathbb{Z}}\mathscr{B}(k)e^{ik\theta}\;,
\end{equation}
which resembles the Fourier series sum of some $2\pi$-periodic function.

In the rest of this sub-section, the Fourier transform of a function $\mathscr{X}(x)$ is denoted $\hat{\mathscr{X}}(t)\eqdef\int_{-\infty}^\infty\mathscr{X}(x)\,e^{i2\pi xt}\,dx$.
As $\mathscr{C}^{m,n}(x)$ is essentially a reciprocal beta function, we may rewrite it in integral form~\cite[5.12.5]{FWJOlver:2010a}:
\begin{align*}
  \mathscr{C}^{m,n}(x) &= \frac{B(m+1,n+1)}{B(m+1+x,n+1-x)} = \frac{2^{m+n+1}m!n!}{\pi(m+n)!}\int_0^{\frac{\pi}{2}}\cos^{m+n}t\,\cos\left((2x+m-n)t\right)\,dt \\
  &= \frac{C_{m,n}}{\pi}\int_{-\frac{\pi}{2}}^{\frac{\pi}{2}}\cos^{m+n}t\,e^{-i(2x+m-n)t}\,dt
   = C_{m,n}\int_{-\frac{1}{2}}^{\frac{1}{2}}\cos^{m+n}(\pi t)\,e^{-i(2x+m-n)\pi t}\,dt\;,
\end{align*}
where:
\begin{equation}\label{eq:instab_int_cmn}
  C_{m,n} = \frac{2^{m+n}m!n!}{(m+n)!}\;.
\end{equation}
Therefore the Fourier transform of $\mathscr{C}^{m,n}(x)$ is:
\begin{equation}\label{eq:instab_int_c_ft}
  \hat{\mathscr{C}}^{m,n}(t) = C_{m,n}\cos^{m+n}(\pi t)e^{i(n-m)\pi t}\chi_{\left[-\frac{1}{2},\frac{1}{2}\right]}(t)\,,
\end{equation}
where $\chi_I(t)$ is the indicator function that equals $1$ if $t$ is in the interval $I$ and $0$ if otherwise.
It follows from the definition~\cref{eq:instab_int_a} that:
\begin{equation}\label{eq:instab_int_a_int}
  \mathscr{A}(x) = \int_{-1}^1(\hat{\mathscr{C}}^{l,r}\ast\hat{\mathscr{C}}^{l',r'})(t)\,e^{-i2\pi xt}\,dt\;,
\end{equation}
where $\ast$ is the convolution and we used the fact that $\hat{\mathscr{A}}(t) = (\hat{\mathscr{C}}^{l,r}\ast\hat{\mathscr{C}}^{l',r'})(t)$ is a continuously differentiable function on $\mathbb{R}$ and compactly supported on $[-1, 1]$.
Denoting the positive and negative parts of a real number $t$ by $t^+$ and $t^-$, respectively, by direct computation:
\begin{equation}\label{eq:instab_int_a_ft}
  \hat{\mathscr{A}}(t) = \chi_{[-1,1]}(t)\,A\,e^{i(r'-l')\pi t}\int_{t^+-\frac{1}{2}}^{t^-+\frac{1}{2}}\cos^{l+r}(\pi t')\,\cos^{l'+r'}(\pi(t-t'))\,e^{i(r-l-r'+l')\pi t'}\,dt'\;.
\end{equation}
where $A=C_{l,r}C_{l',r'}$.
By the definition of $\mathscr{B}$, its Fourier transform $\hat{\mathscr{B}}$ is an anti-derivative of $\mathscr{A}(-t)-\mathscr{A}(t)$; once we constructed an integral representation of $\mathscr{B}$, the Fourier series~\cref{eq:instab_int_hr_b} can be found equal to a periodization of $\hat{\mathscr{B}}$, more details are given next.

Note that $r-r', l-l'\in\{0,1\}$, thus $r-l-r'+l'\in\{-1,0,1\}$; we consider first $r-l-r'+l'=0$ that simplifies the integral, and then consider the other cases. 

\medskip

\noindent
{\it Case 1: $r-r'=l-l'$}, then we write $p=l'-r'=l-r=\frac{1}{2}(L-R)\ge0$:
\begin{align*}
  &\hat{\mathscr{A}}(t) = \chi_{[-1,1]}(t)\,A\,e^{-ip\pi t}\int_{t^+-\frac{1}{2}}^{t^-+\frac{1}{2}}\cos^{l+r}(\pi t')\,\cos^{l'+r'}(\pi(t-t'))\,dt'\;, \\
  \Rightarrow\quad
  &\hat{\mathscr{A}}(-t) - \hat{\mathscr{A}}(t) = 2i\chi_{[-1,1]}(t)A\sin(p\pi t)\int_{t^+-\frac{1}{2}}^{t^-+\frac{1}{2}}\cos^{l+r}(\pi t')\,\cos^{l'+r'}(\pi(t-t'))\,dt'\;.
\end{align*}
Define:
\begin{equation}\label{eq:instab_int_d}
  \mathscr{D}(t) = \chi_{[-1,1]}(t)\int_{t^+-\frac{1}{2}}^{t^-+\frac{1}{2}}\cos^{l+r}(\pi t')\,\cos^{l'+r'}(\pi(t-t'))\,dt'\;,
\end{equation}
which is the convolution of $\chi_{[-1/2,1/2]}(t)\cos^{l+r}(t)$ and $\chi_{[-1/2,1/2]}(t)\cos^{l'+r'}(t)$, both of which are even, positive on $(-1/2,1/2)$, and strictly decreasing on $[0,1/2]$, and they have continuous derivatives on the entire real line $\mathbb{R}$.
Therefore the properties of $\mathscr{D}$ include:
\begin{equation}\label{eq:instab_int_d_prop}
  \begin{array}{lll}
    \mathscr{D}\in C^1(\mathbb{R})\;; & \quad\mathscr{D}(t) = 0\;,\ \abs{t}\ge1\;; & \quad\mathscr{D}(t)=\mathscr{D}(-t)\;; \\
    \mathscr{D}(t)>0\;,\ \abs{t}<1\;; & \quad\mathscr{D}'(t)<0\;,\ 0<t<1\;.
  \end{array}
\end{equation}

Moving on with the construction of the Fourier transform of $\mathscr{B}(x)$, we define:
\begin{equation}\label{eq:instab_int_b_ft}
  \hat{\mathscr{B}}(t) = 2A\int_{\min(\abs{t},1)}^1\sin(p\pi t')\,\mathscr{D}(t')\,dt'\;,
\end{equation}
then it is easy to verify that $\hat{\mathscr{B}}$ is (1) continuously differentiable on $\mathbb{R}$, (2) compactly supported on $[-1,1]$, and (3) even; and we claim that~\cref{eq:instab_int_b_ft} is precisely the Fourier transform of $\mathscr{B}(x)$.
Indeed, one computes:
\begin{displaymath}
  \hat{\mathscr{B}}'(t) = -2A\sin(p\pi t)\mathscr{D}(t) = i\left[\hat{\mathscr{A}}(-t)-\hat{\mathscr{A}}(t)\right]\;.
\end{displaymath}
Taking the inverse Fourier transform of both sides, we obtain the inverse Fourier transform of $\hat{\mathscr{B}}(t)$ as $\frac{i\left[\mathscr{A}(-x)-\mathscr{A}(x)\right]}{ix} = \mathscr{B}(x)$.

Next we define a $1$-periodization of $\hat{\mathscr{B}}$ as:
\begin{equation}\label{eq:instab_int_b_per}
  \tilde{\mathscr{B}}(t) = \sum_{k=-\infty}^\infty\hat{\mathscr{B}}(t+k)\;,
\end{equation}
which is well defined as $\hat{\mathscr{B}}$ is compactly supported in $[-1,1]$; and it is easy to see that $\tilde{\mathscr{B}}$ is continuously differentiable on $\mathbb{R}$.
The complex Fourier coefficient for $k\in\mathbb{Z}$ of $\tilde{\mathscr{B}}$ is:
\begin{displaymath}
  \int_{-\frac{1}{2}}^{\frac{1}{2}}\tilde{\mathscr{B}}(t)e^{-2\pi ikt}\,dt = \int_{-1}^1\hat{\mathscr{B}}(t)e^{-2\pi ikt}\,dt = \mathscr{B}(k)\;;
\end{displaymath}
therefore by~\cref{eq:instab_int_hr_b} $h_r(\theta)$ is precisely $\tilde{\mathscr{B}}(t)$ with $t=\frac{\theta}{2\pi}$, which indicates:
\begin{equation}\label{eq:instab_int_hr_case1}
  h_r(\theta) = \tilde{\mathscr{B}}\left(\frac{\theta}{2\pi}\right) = \hat{\mathscr{B}}\left(\frac{\theta}{2\pi}\right)+\hat{\mathscr{B}}\left(\frac{\theta}{2\pi}-1\right)\;,\quad 0\le\theta\le2\pi\;.
\end{equation}

\medskip

\noindent
{\it Case 2: $r-r'=1, l-l'=0$}. 
It is clear that $l'>0$.
Let us define a auxiliary method with stencil $(l_0,r_0,l_0',r_0')=(l,r,l'-1,r')$, and use the subscript $_0$ to denote any quantity associated with this method.
Because the auxiliary method belongs to Case 1, one has $h_{r0}(\theta) = \hat{\mathscr{B}}_0(\theta/(2\pi))+\hat{\mathscr{B}}_0(\theta/(2\pi)-1)$ as given before.
To secure a connection between $h_r$ and $h_{r0}$, we start with $\mathscr{A}$ and $\mathscr{A}_0$:
\begin{displaymath}
  \mathscr{A}(x) = \mathscr{C}^{l,r}(x)\mathscr{C}^{l',r'}(x)\;,\quad
  \mathscr{A}_0(x) = \mathscr{C}^{l,r}(x)\mathscr{C}^{l'-1,r'}(x)\quad\Rightarrow\quad
  \mathscr{A}(x) = \frac{l'}{l'+x}\mathscr{A}_0(x)\;,
\end{displaymath}
therefore the $\mathscr{B}$-symbols satisfy:
\begin{displaymath}
  \mathscr{B}(x)-\mathscr{B}_0(x) = \frac{\mathscr{A}(-x)-\mathscr{A}(x)-\frac{l'-x}{l'}\mathscr{A}(-x)+\frac{l'+x}{l'}\mathscr{A}(x)}{x} = \frac{\mathscr{A}(-x)+\mathscr{A}(x)}{l'}\;.
\end{displaymath}
Thus the Fourier transform of $\mathscr{B}-\mathscr{B}_0$ is precisely $\hat{\mathscr{A}}(-t)+\hat{\mathscr{A}}(t)$, which is compactly supported on $[-1,1]$.
By defining a $1$-periodization of $\hat{\mathscr{A}}(-t)+\hat{\mathscr{A}}(t)$ like~\cref{eq:instab_int_b_per}, one computes:
\begin{equation}\label{eq:instab_int_hr_case2}
  h_r(\theta)-h_{r0}(\theta) = \sum_{k\in\mathbb{Z}}\frac{\mathscr{A}(k)+\mathscr{A}(-k)}{l'}e^{ik\theta} = \frac{2}{l'}\left[\oname{Re}\hat{\mathscr{A}}\left(\frac{\theta}{2\pi}\right)+\oname{Re}\hat{\mathscr{A}}\left(\frac{\theta}{2\pi}-1\right)\right]\;,
\end{equation}
where we used the fact that $\hat{\mathscr{A}}(t)$ is the complex conjugate of $\hat{\mathscr{A}}(-t)$.

\medskip

\noindent
{\it Case 3: $r-r'=0, l-l'=1$}. 
The strategy is similar as before, we consider an auxiliary method with the stencil $(l_0,r_0,l_0',r_0')=(l-1,r,l',r')$, which belongs to Case 1.
In this case:
\begin{displaymath}
  \mathscr{A}(x) = \mathscr{C}^{l,r}(x)\mathscr{C}^{l',r'}(x)\;,\quad
  \mathscr{A}_0(x) = \mathscr{C}^{l-1,r}(x)\mathscr{C}^{l',r'}(x)\quad\Rightarrow\quad
  \mathscr{A}(x) = \frac{l}{l+x}\mathscr{A}_0(x)\;,
\end{displaymath}
and consequently $\mathscr{B}(x)-\mathscr{B}_0(x)=(\mathscr{A}(-x)+\mathscr{A}(x))/l$, therefore:
\begin{equation}\label{eq:instab_int_hr_case3}
  h_r(\theta)-h_{r0}(\theta) = \frac{2}{l}\left[\oname{Re}\hat{\mathscr{A}}\left(\frac{\theta}{2\pi}\right)+\oname{Re}\hat{\mathscr{A}}\left(\frac{\theta}{2\pi}-1\right)\right]\;.
\end{equation}

\medskip

Summarizing all three cases, especially~\cref{eq:instab_int_hr_case1}, \cref{eq:instab_int_hr_case2}, and~\cref{eq:instab_int_hr_case3}, and using the integrals~\cref{eq:instab_int_a_ft} and~\cref{eq:instab_int_b_ft}, we obtain the following lemma on the integral representation of $h_r(\theta)$.
\begin{lemma}\label{lm:instab_int}
  Let the HV scheme be given by the stencil $(l,r,l',r')$ such that $L\ge R$, then the real part of $H(e^{i\theta})$ has the following integral form:

  Case 1: If $r-r'=l-l'$, for all $0\le\theta\le2\pi$,
  \begin{align}
    \notag
    h_r(\theta) &= 2A\left(\int_{\frac{\theta}{2\pi}}^1+\int_{1-\frac{\theta}{2\pi}}^1\right)\sin(p\pi t)\left[\int_{t-\frac{1}{2}}^{\frac{1}{2}}\cos^{l+r}(\pi t')\,\cos^{l'+r'}(\pi(t-t'))\,dt'\right]\,dt\;. \\
    \label{eq:instab_int_hr_c1}
    &= \frac{A}{\pi}\left(\int_\theta^{2\pi}+\int_{2\pi-\theta}^{2\pi}\right)\mathscr{D}\left(\frac{\theta'}{2\pi}\right)\,\sin\frac{p\theta'}{2}\,d\theta'\;.
  \end{align}

  Case 2: If $r-r'=1$ and $l-l'=0$, with $p'=p-1$ then for all $0\le\theta\le2\pi$,
  \begin{align}
    \notag
    h_r(\theta) =&\ \frac{(L+R)A}{l'}\left(\int_{\frac{\theta}{2\pi}}^1+\int_{1-\frac{\theta}{2\pi}}^1\right)\sin(p'\pi t)\left[\int_{t-\frac{1}{2}}^{\frac{1}{2}}\cos^{l+r}(\pi t')\,\cos^{l'+r'-1}(\pi(t-t'))\,dt'\right]\,dt + \\
    \label{eq:instab_int_hr_c2}
                 &\ \frac{2A}{l'}\int_{\frac{\theta}{2\pi}-\frac{1}{2}}^{\frac{1}{2}}\cos^{l+r}(\pi t')\,\cos^{l'+r'}\left(\frac{\theta}{2}-\pi t'\right)\,\cos\left(\frac{p\theta}{2}-\pi t'\right)\,dt' + \\
    \notag
                 &\ \frac{2A}{l'}\int_{\frac{1}{2}-\frac{\theta}{2\pi}}^{\frac{1}{2}}\cos^{l+r}(\pi t')\,\cos^{l'+r'}\left(\frac{\theta}{2}+\pi t'\right)\,\cos\left(\frac{p\theta}{2}+\pi t'\right)\,dt'\;.
  \end{align}

  Case 3: If $r-r'=0$ and $l-l'=1$, for all $0\le\theta\le2\pi$,
  \begin{align}
    \notag
    h_r(\theta) =&\ \frac{(L+R)A}{l}\left(\int_{\frac{\theta}{2\pi}}^1+\int_{1-\frac{\theta}{2\pi}}^1\right)\sin(p\pi t)\left[\int_{t-\frac{1}{2}}^{\frac{1}{2}}\cos^{l+r-1}(\pi t')\,\cos^{l'+r'}(\pi(t-t'))\,dt'\right]\,dt + \\
    \label{eq:instab_int_hr_c3}
                 &\ \frac{2A}{l}\int_{\frac{\theta}{2\pi}-\frac{1}{2}}^{\frac{1}{2}}\cos^{l+r}(\pi t')\,\cos^{l'+r'}\left(\frac{\theta}{2}-\pi t'\right)\,\cos\left(\frac{p\theta}{2}+\pi t'\right)\,dt' + \\
    \notag
                 &\ \frac{2A}{l}\int_{\frac{1}{2}-\frac{\theta}{2\pi}}^{\frac{1}{2}}\cos^{l+r}(\pi t')\,\cos^{l'+r'}\left(\frac{\theta}{2}+\pi t'\right)\,\cos\left(\frac{p\theta}{2}-\pi t'\right)\,dt'\;.
  \end{align}
  Here in all cases $A=C_{l,r}C_{l',r'}=\frac{2^{l+r+l'+r'}l!r!l'!r'!}{(l+r)!(l'+r')!}$ as defined before and $p=l'-r'$.
\end{lemma}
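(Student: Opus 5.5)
The plan is to assemble the three cases from the Fourier-analytic identities derived above, and then reduce each to the stated double integrals by elementary substitutions.

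\emph{Case 1.} Start from \cref{eq:instab_int_hr_case1}, $h_r(\theta)=\hat{\mathscr{B}}(\theta/(2\pi))+\hat{\mathscr{B}}(\theta/(2\pi)-1)$. Insert the definition \cref{eq:instab_int_b_ft}. For $0\le\theta\le2\pi$ the first argument lies in $[0,1]$. Since $\hat{\mathscr{B}}$ is even, the second term equals $\hat{\mathscr{B}}(1-\theta/(2\pi))$. This gives the outer integrals $\int_{\theta/(2\pi)}^1+\int_{1-\theta/(2\pi)}^1$ of $2A\sin(p\pi t)\mathscr{D}(t)$. For $t\in[0,1]$ we have $t^+=t$ and $t^-=0$, so the limits of the inner integral in \cref{eq:instab_int_d} become $[t-\tfrac12,\tfrac12]$, which is the first line of \cref{eq:instab_int_hr_c1}. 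The second line follows from the substitution $t=\theta'/(2\pi)$, which produces the factor $2A/(2\pi)=A/\pi$ and turns $\sin(p\pi t)$ into $\sin(p\theta'/2)$.

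\emph{Cases 2 and 3.} Write $h_r=h_{r0}+(h_r-h_{r0})$. For the auxiliary method $h_{r0}$, apply the Case 1 formula with its parameters. In Case 2 the auxiliary stencil is $(l,r,l'-1,r')$, with $p_0=l'-1-r'=p'$ and $A_0=C_{l,r}C_{l'-1,r'}$. From \cref{eq:instab_int_cmn}, $C_{l'-1,r'}/C_{l',r'}=(l'+r')/(2l')$, so $2A_0=(l'+r')A/l'$. The stated prefactor $(L+R)A/l'$ should come out of this bookkeeping; the bookkeeping has to be redone carefully, and in doing so I would recheck the convention for $C_{m,n}$ against \cref{eq:instab_int_cmn}. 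Case 3 is identical with $l$ in place of $l'$ and the exponent $l+r-1$.

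Next evaluate the correction term in \cref{eq:instab_int_hr_case2} (resp.\ \cref{eq:instab_int_hr_case3}), namely $\frac{2}{l'}[\oname{Re}\hat{\mathscr{A}}(s)+\oname{Re}\hat{\mathscr{A}}(s-1)]$ with $s=\theta/(2\pi)\in[0,1]$, using \cref{eq:instab_int_a_ft}. For $\hat{\mathscr{A}}(s)$ the range of $t'$ is $[s-\tfrac12,\tfrac12]$. In Case 2 the phase is $e^{-ip\pi s}e^{i\pi t'}$, since $r'-l'=-p$ and $r-l-r'+l'=1$. Its real part is $\cos(p\pi s-\pi t')=\cos(p\theta/2-\pi t')$, and $\cos(\pi(s-t'))=\cos(\theta/2-\pi t')$. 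This gives the second line. For $\hat{\mathscr{A}}(s-1)$ with $s-1\in[-1,0]$, we have $t^+=0$ and $t^-=s-1$, so the range is $[-\tfrac12,s-\tfrac12]$. Substituting $t'\mapsto -t'$ maps the range to $[\tfrac12-s,\tfrac12]$ and uses the evenness of $\cos^{l+r}$. Using $2\pi$-periodicity with the parity of the integer powers, $\cos^{l'+r'}(\pi(s-1+t'))$ becomes $\pm\cos^{l'+r'}(\theta/2+\pi t')$. The phase real part becomes $\cos(p\pi(s-1)+\pi t')$, which is $\pm\cos(p\theta/2+\pi t')$ by the same parity reasoning. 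This gives the third line. Case 3 has phase $e^{-ip\pi s}e^{-i\pi t'}$, which explains the swapped signs inside the last cosines.

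\emph{Main obstacle.} The main difficulty is tracking these signs $(-1)^{l'+r'}$ and $(-1)^p$ arising from the shift by $1$. I would check that they cancel using the parity relation between $l'+r'$ and $p=l'-r'$ (they have equal parity, $(-1)^{l'+r'}=(-1)^{p}$). The other points needing care are the constant prefactors and the boundary conventions for $t^\pm$ at $s=0,1$. At those endpoints, continuity of $\hat{\mathscr{A}}$ and $\hat{\mathscr{B}}$ makes the choice of convention harmless.
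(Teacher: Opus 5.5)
You are following the paper's route. Case 1 comes from \cref{eq:instab_int_hr_case1} and \cref{eq:instab_int_b_ft}. Cases 2--3 come from $h_r=h_{r0}+(h_r-h_{r0})$ together with \cref{eq:instab_int_hr_case2}, \cref{eq:instab_int_hr_case3} and \cref{eq:instab_int_a_ft}. Your evaluation of the $\hat{\mathscr{A}}$-correction terms is correct and more explicit than the paper's one-line ``summarizing''. The phases $e^{-ip\pi s}e^{\pm i\pi t'}$, the reflection $t'\mapsto -t'$ for $\hat{\mathscr{A}}(s-1)$, and the cancellation $(-1)^{p+l'+r'}=1$ give exactly the second and third lines of \cref{eq:instab_int_hr_c2} and \cref{eq:instab_int_hr_c3}.

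\textbf{The prefactor step you defer.} The prefactor of the first line in Cases 2 and 3 does not ``come out of the bookkeeping.'' In your decomposition that line is exactly $h_{r0}$, and nothing else contributes to it. Your computation is correct: $2A_0=(l'+r')A/l'$ in Case 2, and $2A_0=(l+r)A/l$ in Case 3, since $C_{l-1,r}/C_{l,r}=(l+r)/(2l)$. In general $L+R\neq l'+r'$ and $h_{r0}\not\equiv 0$. For example, take $(l,r,l',r')=(2,1,2,0)$, whose auxiliary stencil $(2,1,1,0)$ has $h_{r0}=3+2\cos\theta$. So the stated prefactors $(L+R)A/l'$ and $(L+R)A/l$ cannot be proved, and nothing in the derivation preceding the lemma produces them either.

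\textbf{An inherited normalization error.} You also import a normalization error from the Case 1 derivation without checking it. With $\hat{\mathscr{X}}(t)=\int\mathscr{X}(x)e^{i2\pi xt}\,dx$, the transform of $x\mathscr{B}(x)=\mathscr{A}(-x)-\mathscr{A}(x)$ is $\hat{\mathscr{B}}'(t)/(2\pi i)$. Hence $\hat{\mathscr{B}}'=2\pi i[\hat{\mathscr{A}}(-t)-\hat{\mathscr{A}}(t)]$, not $i[\hat{\mathscr{A}}(-t)-\hat{\mathscr{A}}(t)]$. The function defined in \cref{eq:instab_int_b_ft} is therefore $\hat{\mathscr{B}}/(2\pi)$. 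Check this with $(1,0,1,0)$:
\begin{itemize}
\item directly, $H=4+2e^{-i\theta}$, so $h_r(0)=6$;
\item \cref{eq:instab_int_hr_c1}, with $A=4$ and $\mathscr{D}(t)=\frac{\sin\pi t}{2\pi}+\frac{(1-t)\cos\pi t}{2}$ on $[0,1]$, gives $2A\int_0^1\sin(\pi t)\mathscr{D}(t)\,dt=3/\pi$.
\end{itemize}
The two values differ by exactly the factor $2\pi$.

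\textbf{What your argument actually proves.} Carried through correctly, it gives the lemma with the following constants:
\begin{itemize}
\item Case 1: $2A$ and $A/\pi$ replaced by $4\pi A$ and $2A$.
\item Cases 2 and 3: first-line prefactors $2\pi(l'+r')A/l'$ and $2\pi(l+r)A/l$.
\item The correction terms with $2A/l'$ and $2A/l$ are unchanged.
\end{itemize}
For $(2,1,2,0)$ these constants give $h_r(0)=5+\frac{10}{3}=\frac{25}{3}$, which matches $\beta_0+\beta_{-1}+\beta_{-2}=4+4+\frac13$. Sections 3.2 and 4.1 use only the signs of these positive constants and of each piece separately, so nothing downstream breaks. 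You should nonetheless state and prove the corrected constants, not the ones in the statement.
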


\subsection{HV methods with $L-R>3$}
\label{sec:instab_h}
Motivated by the previous work~\cite{XZeng:2026a}, the smallest value of $h_r(\theta)$ is achieved around $\theta^\ast=2\pi/((L-R)/2)$, at least when $L-R$ is large.
We shall adopt a similar strategy here and prove whenever $L-R>3$, $h_r(\theta^\ast)<0$ for some carefully chosen $\theta^\ast$.
The proof utilizes~\cref{lm:instab_int}, and we again distinguish among three cases.

\medskip

\noindent
{\it Case 1: $r-r'=l-l'$}, then $p=l'-r'=l-r=\frac{1}{2}(L-R)\ge2$.
Let $\theta^\ast=\frac{2\pi}{l'-r'}=\frac{2\pi}{p}$, by~\cref{eq:instab_int_hr_c1} and utilizing $\mathscr{D}(t)$ as defined in~\cref{eq:instab_int_d}, we can compute:
\begin{align*}
  h_r(\theta^\ast) &= \frac{A}{\pi}\left(\int_{\theta^\ast}^{2\pi}+\int_{2\pi-\theta^\ast}^{2\pi}\right)\mathscr{D}\left(\frac{\theta'}{2\pi}\right)\sin\frac{p\theta'}{2}d\theta' \\
  &= \frac{A}{\pi}\sum_{k=1}^{p-1}\int_{k\theta^\ast}^{(k+1)\theta^\ast}\mathscr{D}\left(\frac{\theta'}{2\pi}\right)\sin\frac{p\theta'}{2}d\theta'
 + \frac{A}{\pi}\int_{(p-1)\theta^\ast}^{p\theta^\ast}\mathscr{D}\left(\frac{\theta'}{2\pi}\right)\sin\frac{p\theta'}{2}d\theta'\;.
\end{align*}
Define for $1\le k\le p-1$:
\begin{displaymath}
  I_k = (-1)^k\int_{k\theta^\ast}^{(k+1)\theta^\ast}\mathscr{D}\left(\frac{\theta'}{2\pi}\right)\sin\frac{p\theta'}{2}d\theta'
  = \int_0^{\theta^\ast}\mathscr{D}\left(\frac{k}{p}+\frac{\theta'}{2\pi}\right)\sin\frac{p\theta'}{2}\,d\theta' > 0\;.
\end{displaymath}
Because $\mathscr{D}$ decreases on $[0, 1]$, there is $I_1>I_2>\cdots>I_{p-1}>0$.
Therefore if $p$ is even:
\begin{displaymath}
  h_r(\theta^\ast) = \frac{A}{\pi}\left[-(I_1-I_2) - (I_3-I_4) - \cdots - (I_{p-3}-I_{p-2}) - 2I_{p-1}\right] < 0\;,
\end{displaymath}
which completes the proof.

Now suppose $p$ is odd, then $p\ge3$, and we aim at showing $I_{p-2}\ge2 I_{p-1}$, in which case:
\begin{displaymath}
  h_r(\theta^\ast) = \frac{A}{\pi}\left[-(I_1-I_2) - (I_3-I_4) - \cdots - (I_{p-2}-2I_{p-1})\right] < 0\;. 
\end{displaymath}
A sufficient condition for $I_{p-2}\ge2 I_{p-1}$ is $\mathscr{D}\left(\frac{p-2}{p}+\frac{\theta'}{2\pi}\right)>2\mathscr{D}\left(\frac{p-1}{p}+\frac{\theta'}{2\pi}\right)$.
Defining $t_1 = 1-\left(\frac{p-2}{p}+\frac{\theta'}{2\pi}\right) = \frac{2}{p}-\frac{\theta'}{2\pi}$ and $t_2 = 1-\left(\frac{p-1}{p}+\frac{\theta'}{2\pi}\right) = \frac{1}{p}-\frac{\theta'}{2\pi}$, we obtain:
\begin{align}
  \label{eq:instab_h_d1}
  \mathscr{D}\left(\frac{p-2}{p}+\frac{\theta'}{2\pi}\right) = \mathscr{D}(1-t_1)
  &= \frac{t_1}{\pi}\int_0^\pi\sin^{l+r}(t_1\theta'')\,\sin^{l'+r'}(t_1(\pi-\theta''))\,d\theta''\;, \\
  \label{eq:instab_h_d2}
  \mathscr{D}\left(\frac{p-1}{p}+\frac{\theta'}{2\pi}\right) = \mathscr{D}(1-t_2)
  &= \frac{t_2}{\pi}\int_0^\pi\sin^{l+r}(t_2\theta'')\,\sin^{l'+r'}(t_2(\pi-\theta''))\,d\theta''\;.
\end{align}
For all $0<\theta''<\pi$, there is $0<t_2\theta''<t_1\theta''$ and:
\begin{displaymath}
  (\pi-t_2\theta'')-t_1\theta'' = \pi-(t_1+t_2)\theta'' = \pi-\left(\frac{3}{p}-\frac{\theta'}{\pi}\right)\theta'' > \pi-\frac{3}{p}\pi \ge 0\;,
\end{displaymath}
thus $0<t_2\theta''<t_1\theta''<\pi-t_2\theta''$, which indicates $\sin(t_1\theta'')>\sin(t_2\theta'')>0$.
Changing $\theta''$ to $\pi-\theta''$, which is again between $0$ and $\pi$, one immediately gets $\sin(t_1(\pi-\theta''))>\sin(t_2(\pi-\theta''))>0$.
Hence the integral of~\cref{eq:instab_h_d1} is strictly larger than that of~\cref{eq:instab_h_d2} and both are positive; therefore $\mathscr{D}(1-t_1)>2\mathscr{D}(1-t_2)$ immediagely follows from $t_1>\frac{2}{p}-\frac{2\theta'}{2\pi}=2t_2$ for all $0<\theta'<\theta^\ast$, which completes the proof of $I_{p-2}>2I_{p-1}$.

\medskip

\noindent
{\it Case 2: $r-r'=1, l-l'=0$}, then $p=l'-r'=\frac{1}{2}(L-R+1)\ge3$. 
Let all quantities related to the stencil $(l_0,r_0,l_0',r_0')=(l,r,l'-1,r')$ be designated with the subscript $_0$ as before and $L_0-R_0=L-R-1=2p-2>3$, then by the previous case one has:
\begin{displaymath}
  h_{r0}(\theta^\ast) < 0 \;,\quad\textrm{ where }\quad
  \theta^\ast = \frac{2\pi}{l_0'-r_0'} = \frac{2\pi}{p-1}\;.
\end{displaymath}
Noting that $\frac{p\theta^\ast}{2}=\pi+\frac{\theta^\ast}{2}$, by~\cref{eq:instab_int_hr_c2} we have:
\begin{align*}
  (h_r(\theta^\ast)-h_{r0}(\theta^\ast))/(2A/l') =&\ -\int_{\frac{\theta^\ast}{2\pi}-\frac{1}{2}}^{\frac{1}{2}}\cos^{l+r}(\pi t')\,\cos^{l'+r'+1}\left(\frac{\theta^\ast}{2}-\pi t'\right)\,dt' \\
  &\ - \int_{\frac{1}{2}-\frac{\theta^\ast}{2\pi}}^{\frac{1}{2}}\cos^{l+r}(\pi t')\,\cos^{l'+r'+1}\left(\frac{\theta^\ast}{2}+\pi t'\right)\,dt'\;.
\end{align*}
Let $\mathscr{D}_1(t)$ be defined by~\cref{eq:instab_int_d} that corresponds to the stencil $(l,r,l',r'+1)$, which clearly belongs to Case 1, then the last equality can be written as:
\begin{displaymath}
  \frac{h_r(\theta^\ast)-h_{r0}(\theta^\ast)}{2A/l'} = -\mathscr{D}_1\left(\frac{\theta^\ast}{2\pi}\right)+(-1)^p\mathscr{D}_1\left(1-\frac{\theta^\ast}{2\pi}\right)\;.
\end{displaymath}
We see that the right hand side is negative because by~\cref{eq:instab_int_d_prop}, $\mathscr{D}_1(t)$ is strictly decreasing on $[0,1]$ and it is clear: $\frac{\theta^\ast}{2\pi}-\left(1-\frac{\theta^\ast}{2\pi}\right) = \frac{2}{p-1}-1\le0$.
Therefore, $h_r(\theta^\ast)<h_{r0}(\theta^\ast)<0$.

\medskip

\noindent
{\it Case 3: $r-r'=0, l-l'=1$}, then $p=l'-r'=\frac{1}{2}(L-R-1)\ge2$.
Similar as in~\cref{sec:instab_int}, we designate all quantities related to the stencil $(l_0,r_0,l_0',r_0')=(l-1,r,l',r')$ by the subscript $_0$, then $L_0-R_0=L-R-1=2p>3$ and it belongs to Case 1, and one has:
\begin{displaymath}
  h_{r0}(\theta^\ast) < 0 \;,\quad\textrm{ where }\quad
  \theta^\ast = \frac{2\pi}{l_0'-r_0'} = \frac{2\pi}{p}\;.
\end{displaymath}
By~\cref{eq:instab_int_hr_c3}, one obtains:
\begin{align*}
  (h_r(\theta^\ast)-h_{r0}(\theta^\ast))/(2A/l) =&\ -\int_{\frac{\theta^\ast}{2\pi}-\frac{1}{2}}^{\frac{1}{2}}\cos^{l+r+1}(\pi t')\,\cos^{l'+r'}\left(\frac{\theta^\ast}{2}-\pi t'\right)\,dt' \\
  &\ - \int_{\frac{1}{2}-\frac{\theta^\ast}{2\pi}}^{\frac{1}{2}}\cos^{l+r+1}(\pi t')\,\cos^{l'+r'}\left(\frac{\theta^\ast}{2}+\pi t'\right)\,dt'\;,
\end{align*}
which is precisely $-\mathscr{D}_1\left(\frac{\theta^\ast}{2\pi}\right)-(-1)^p\mathscr{D}_1\left(1-\frac{\theta^\ast}{2\pi}\right)<0$.
Here $\mathscr{D}_1$ is defined by~\cref{eq:instab_int_d} using the stencil $(l,r+1,l',r')$ that belongs to Case 1, and the negativity comes from the fact that $\mathscr{D}_1$ strictly decreases on $[0,1]$ and $\frac{\theta^\ast}{2\pi}-\left(1-\frac{\theta^\ast}{2\pi}\right) = \frac{2}{p}-1\le0$.
Hence $h_r(\theta^\ast)<h_{r0}(\theta^\ast)<0$.

\medskip

\noindent
In summary, $h_r(\theta^\ast)<0$ at the chosen $\theta^\ast$ in all cases, showing that the method is unstable.

\subsection{HV methods with $L-R=3$}
\label{sec:instab_f}
By~\cref{tb:prelim_stab}, the first stability condition~\cref{eq:prelim_stab_cond_h} likely holds when $L-R=3$; thus we focus on the second condition~\cref{eq:prelim_stab_cond_g} when $L-R=3$ and assume $R\ge1$\footnote{The method with $R=0$ and $L=3$ is actually stable, see~\cite{XZeng:2019a}.}.
In this case, the stencil is given by either (a) $(l,r,l',r')=(r+2,r,r+1,r)$ or (b) $(l,r,l',r')=(r+1,r,r+1,r-1)$, where in both cases $r\ge1$.

The strategy is to show $q(\theta)>0$ when $\abs{\theta}$ is small, where $q(\theta)$ is given by~\cref{eq:prelim_stab_cond_equiv}.
The difficulty lies in the fact that the leading terms of the Taylor series expansion of $q(\theta)$ at $\theta=0$ are zero and we need to seek the first non-zero term.

Inspired by a technique that we have used in studying the central HV methods for diffusion equations, we compute the Laurent series expansion of:
\begin{equation}\label{eq:instab_f_p}
  P(x) = \frac{(-1)^{R-1}2l!l'!r!r'!}{x\,\prod_{k=-l}^r(x-k)\,\prod_{k=-l'}^{r'}(x-k)}
\end{equation}
in two different ways.
First, we find the partial fraction expansion of $P(x)$:
\begin{equation}\label{eq:instab_f_p_gen}
  P(x) = \frac{P_{0,3}}{x^3} + \frac{P_{0,2}}{x^2} + \frac{P_{0,1}}{x} + \sum_{k=-l,k\ne0}^r\frac{P_{k,1}}{x-k} + \sum_{k=-l',k\ne0}^{r'}\frac{P_{k,2}}{(x-k)^2}\;.
\end{equation}
Then it is straightforward to compute:
\begin{align*}
  &P_{0,3} = \lim_{x\to0}x^3P(x) = -2\;,\quad 
  P_{0,2} = \frac{d}{dx}\left[x^3P(x)\right]\Big|_{x=0} = 2(H_{l'}-H_{r'}+H_l-H_r) = \beta_0\;, \\
  &P_{k,2} = \lim_{x\to k}(x-k)^2P(x) = -\frac{2}{k}C^{l,r}_kC^{l',r'}_k = \beta_k\;,\quad \forall -l'\le k\le r',\ k\ne0\;;
\end{align*}
as for $P_{k,1}$, using logarithmic derivative one computes for $-l'\le k\le r'$ and $k\ne0$:
\begin{displaymath}
  P_{k,1} = \frac{d}{dx}\left[(x-k)^2P(x)\right]\Big|_{x=k} = -P_{k,2}\left(\frac{1}{k}+\zeta^{l,r}_k+\zeta^{l',r'}_k\right) = t_k\;,
\end{displaymath}
and as a matter of fact using the same technique one can show $P_{k,1}=t_k$ for all $-l\le k\le r$ and $k\ne 0$, if it is not already included in $[-l',r']$.
To compute $P_{0,1}$, one notices that the residual of $P(x)$ at $\infty$ is zero, thus one has:
\begin{displaymath}
  \sum_{k=-l}^r\oname{Res}(P,k) = 0\;,
\end{displaymath}
where $\oname{Res}(P,k)$ is the residual of $P$ at $x=k$, or $\oname{Res}(P,k) = P_{k,1}$.
Thus:
\begin{displaymath}
  P_{0,1} = -\sum_{k=-l,k\ne0}^rP_{0,k} = -\sum_{k=-l,k\ne0}^rt_k\;;
\end{displaymath}
therefore we obtain the partial fraction expansion:
\begin{equation}\label{eq:instab_f_p_pf}
  P(x) = -\frac{2}{x^3} + \frac{\beta_0}{x^2} + \sum_{k=-l,k\ne0}^r\left(\frac{t_k}{x-k}-\frac{t_k}{x}\right) + \sum_{k=-l',k\ne0}^{r'}\frac{\beta_k}{(x-k)^2}\;.
\end{equation}
Setting $x=1/z$, one gets:
\begin{align}
  \notag
  P\left(\frac{1}{z}\right) &= -2z^3+\beta_0z^2+\sum_{k=-l,k\ne0}^rt_kz\sum_{n=1}^\infty(kz)^n+\sum_{k=-l',k\ne0}^{r'}\beta_kz^2\sum_{n=0}^\infty(n+1)(kz)^n \\
  \label{eq:instab_f_p_inv}
  &= \left([\beta]_0+[t]_1\right)z^2 + \left(-2+2[\beta]_1+[t]_2\right)z^3 + \sum_{n=4}^\infty\left((n-1)[\beta]_{n-2}+[t]_{n-1}\right)z^n\;,
\end{align}
where $[\beta]_n$ and $[t]_n$ are the $n$-th moments of the $\beta$- and the $t$-sequences, respectively:
\begin{equation}\label{eq:instab_f_coef_pow}
  [\beta]_n \eqdef \sum_{k=-l'}^{r'}k^n\beta_k\;,\quad
  [t]_n \eqdef \sum_{k=-l,k\ne0}^rk^nt_k\;.
\end{equation}

The formal power series of $P(1/z)$ can be computed directly in another way:
\begin{align*}
  P\left(\frac{1}{z}\right) &= \frac{(-1)^{R-1}2l!r!l'!r'!z^{L+R+3}}{\prod_{k=-l,k\ne0}^r(1-kz)\prod_{k=-l',k\ne0}^{r'}(1-kz)} \\
  &= (-1)^{R-1}2l!r!l'!r'!z^{L+R+3}\left(1-C_1z\right) + O(z^{L+R+5})\;,
\end{align*}
where $C_1=\sum_{k=-l'}^{r'}k+\sum_{k=-l}^rk$.
Comparing it with~\cref{eq:instab_f_coef_pow}, we obtain:
\begin{align*}
  &[\beta]_0+[t]_1 = 0\;;\qquad-2+2[\beta]_1+[t]_2 = 0\;; \\ 
  &(n-1)[\beta]_{n-2}+[t]_{n-1} = 0\;,\quad 4\le n\le L+R+2\;; \\
  &(L+R+2)[\beta]_{L+R+1}+[t]_{L+R+2}=(-1)^{R-1}2l!r!l'!r'!\;; \\
  &(L+R+3)[\beta]_{L+R+2}+[t]_{L+R+3}=(-1)^R2l!r!l'!r'!C_1\;.
\end{align*}

Now let us compute the power series expansion of $H(e^{i\theta})$ and $F(e^{i\theta})$ around $\theta=0$:
\begin{align*}
  H(e^{i\theta}) &= \sum_{k=-l'}^{r'}\beta_ke^{ik\theta} = \sum_{n=0}^\infty\sum_{k=-l'}^{r'}(ik)^n\beta_k\frac{\theta^n}{n!} = \sum_{n=0}^\infty\frac{i^n}{n!}[\beta]_n\theta^n\;, \\
  F(e^{i\theta}) &= \sum_{k=-l,k\ne0}^rt_k(e^{ik\theta}-1) = \sum_{n=1}^\infty\sum_{k=-l,k\ne0}^{r}(ik)^nt_k\frac{\theta^n}{n!} = \sum_{n=1}^\infty\frac{i^n}{n!}[t]_n\theta^n\;.
\end{align*}
Therefore:
\begin{align*}
  \theta^2+i\theta H+F &= \theta^2+\sum_{n=1}^\infty\frac{i^n}{n!}(n[\beta]_{n-1})\theta^n+\sum_{n=1}^\infty\frac{i^n}{n!}[t]_n\theta^n \\
  &= -i\frac{2l!r!l'!r'!}{(L+R+2)!}\theta^{L+R+2}-\frac{2l!r!l'!r'!C_1}{(L+R+3)!}\theta^{L+R+3}+(\theta^{L+R+4})\;.
\end{align*}
Extracting the real part and the imaginary part, one obtains:
\begin{align*}
  \theta^2-\theta h_i+f_r = -C_r\theta^{L+R+3} + \varepsilon_r\;,\quad
  \theta h_r + f_i = -C_i\theta^{L+R+2}+ \varepsilon_i\;,
\end{align*}
where $C_i=\frac{2l!r!l'!r'!}{(L+R+2)!}$, $C_r=\frac{2l!r!l'!r'!C_1}{(L+R+3)!}$, $\varepsilon_r=O(\theta^{L+R+4})$, and $\varepsilon_i=O(\theta^{L+R+4})$. 
Additionally, the Taylor series of $h_r$ and $h_i$ are:
\begin{displaymath}
  h_r(\theta) = [\beta]_0 - \frac{1}{2}[\beta]_2\theta^2 + O(\theta^4)\;,\qquad
  h_i(\theta) = [\beta]_1\theta - \frac{1}{6}[\beta]_3\theta^3 + O(\theta^5)\;,
\end{displaymath}
which allow us to compute:
\begin{align*}
  q(\theta) &= h_r(h_rf_r+h_if_i) + f_i^2 \\
  &= -C_rh_r^2\theta^{L+R+3}-C_ih_ih_r\theta^{L+R+2}+2C_ih_r\theta^{L+R+3} + O(\theta^{L+R+4}) \\
  &= \frac{2l!r!l'!r'![\beta]_0}{(L+R+2)!}\left(2-[\beta]_1-\frac{C_1}{L+R+3}[\beta]_0\right)\theta^{2(R+3)} + O(\theta^{2R+7})\;.
\end{align*}
By~\cite[Proposition 5]{XZeng:2019a}, $L>R$ indicates $[\beta]_0>0$, thus it remains to show that:
\begin{equation}\label{eq:instab_f_leading}
  C_{\beta}\eqdef 2-[\beta]_1-\frac{C_1}{L+R+3}[\beta]_0 > 0\;.
\end{equation}
Below we estimate the two moments of the $\beta$-sequences and start with $[\beta]_1$:
\begin{align*}
  2-[\beta]_1 &= 2-\sum_{k=-l',k\ne0}^{r'}k\left(-\frac{2}{k}C^{l,r}_kC^{l',r'}_k\right) = 2\sum_{k=-l'}^{r'}C^{l,r}_kC^{l',r'}_k\;;
\end{align*}
and for the term involving $[\beta]_0$:
\begin{align*}
  \frac{C_1}{L+R+3}[\beta]_0 = \frac{C_1}{L+R+3}\left(2(\zeta^{l,r}_0+\zeta^{l',r'}_0)-\sum_{k=-l',k\ne0}^{r'}\frac{2}{k}C^{l,r}_kC^{l',r'}_k\right)\;. 
\end{align*}
Therefore:
\begin{displaymath}
  C_\beta = 2-\frac{2C_1(\zeta_0^{l,r}+\zeta_0^{l',r'})}{L+R+3} + \sum_{k=-l',k\ne0}^{r'}\left(2-\frac{2C_1}{k(L+R+3)}\right)C^{l,r}_kC^{l',r'}_k\;,
\end{displaymath}
which will be discussed separately for the two cases.

\medskip

\noindent
{\it Case 1: $(l,r,l',r')=(r+2,r,r+1,r),\ r\ge1$}.
One gets $C_1=3r+4$ and:
\begin{displaymath}
  C_\beta = 2-\frac{2(3r+4)\left(\frac{1}{r+2}+\frac{2}{r+1}\right)}{4r+6} + \sum_{k=-r-1,k\ne0}^{r}\left(2-\frac{2(3r+4)}{k(4r+6)}\right)C^{r+2,r}_kC^{r+1,r}_k\;,
\end{displaymath}
which is always positive as for all $r\ge1$:
\begin{align*}
  &2-\frac{2(3r+4)\left(\frac{1}{r+2}+\frac{2}{r+1}\right)}{4r+6} = \frac{4r^3+9r^2-r-8}{(2r+3)(r+1)(r+2)} > 0\;, \\
  &2-\frac{2(3r+4)}{k(4r+6)} > 2 > 0\;,\ \forall k < 0\;;\quad
  2-\frac{2(3r+4)}{k(4r+6)} = \frac{(4k-3)r + (6k-4)}{k(2r+3)} > 0 \;,\ \forall k > 0\;.
\end{align*}

\medskip

\noindent
{\it Case 2: $(l,r,l',r')=(r+1,r,r+1,r-1),\ r\ge1$}.
One gets $C_1=3r+2$ and subsequently:
\begin{displaymath}
  C_\beta = 2-\frac{2(3r+2)\left(\frac{2}{r+1}+\frac{1}{r}\right)}{4r+4} + \sum_{k=-r-1,k\ne0}^{r-1}\left(2-\frac{2(3r+2)}{k(4r+4)}\right)C^{r+1,r}_kC^{r+1,r-1}_k\;.
\end{displaymath}
When $r\ge2$, all terms are positive because:
\begin{align*}
  &2-\frac{2(3r+2)\left(\frac{2}{r+1}+\frac{1}{r}\right)}{4r+4} = \frac{4r^3-r^2-5r-2}{2r(r+1)^2} > 0\;, \\
  &2-\frac{2(3r+2)}{k(4r+4)} > 2 > 0\;,\ \forall k < 0\;;\quad
  2-\frac{2(3r+2)}{k(4r+4)} = \frac{(4k-3)r + (4k-2)}{k(2r+2)} > 0 \;,\ \forall k > 0\;.
\end{align*}
The only issue is that when $r=1$, the first term is negative; but in this case, one has:
\begin{displaymath}
  C_\beta = 2-[\beta]_1-\frac{C_1[\beta]_0}{L+R+3} = -\frac{1}{2} + \sum_{k=-2}^{-1}\left(2-\frac{5}{4k}\right)C^{2,1}_kC^{2,0}_k = \frac{55}{8} > 0\;,
\end{displaymath}
which completes the proof.

\section{Stability proof when $0<L-R\le2$}
\label{sec:stab}
In this section we prove all HV discretizations with $L-R=1$ or $L-R=2$ are stable, that is: $(l,r,l',r')=(r+1,r,r,r), r\ge0$, $(l,r,l',r')=(r,r,r,r-1), r\ge1$, $(l,r,l',r')=(r+1,r,r+1,r), r\ge0$, or $(l,r,l',r')=(r+1,r,r,r-1), r\ge1$.

\subsection{Proving~\cref{eq:prelim_stab_cond_h}}
\label{sec:stab_h}
The proof utilizes the integral form of $h_r(\theta)$ given by~\cref{lm:instab_int}.
Let us begin with the case $r-r'=l-l'$, which includes the stencils $(r+1,r,r+1,r)$ and $(r+1,r,r,r-1)$.
By~\cref{eq:instab_int_hr_c1} (note $p=l'-r'=1$) and~\cref{eq:instab_int_d}:
\begin{displaymath}
  h_r(\theta) = \frac{A}{\pi}\left(\int_\theta^{2\pi}+\int_{2\pi-\theta}^{2\pi}\right)\mathscr{D}\left(\frac{\theta'}{2\pi}\right)\,\sin\frac{\theta'}{2}\,d\theta'\;.
\end{displaymath}
It is thus clear that $h_r(\theta)>0$ for all $\theta$ as the integrand is positive at all $\theta'\in(0, 2\pi)$. 

Taking the derivative one has:
\begin{displaymath}
  h_r'(\theta) = \frac{A}{\pi}\sin\frac{\theta}{2}\left[\mathscr{D}\left(1-\frac{\theta}{2\pi}\right)-\mathscr{D}\left(\frac{\theta}{2\pi}\right)\right]\;. 
\end{displaymath}
Because $\mathscr{D}$ strictly decreases on $[0, 1]$, $h_r'(\theta) < 0$ for $0<\theta\le\pi$.
This result will be used in~\cref{sec:stab_f} to prove~\cref{eq:prelim_stab_cond_g}.

\medskip

When $r-r'=1$ and $l-l'=0$, i.e., $(l,r,l',r')=(r,r,r,r-1)$ with $r\ge1$.
Then by~\cref{eq:instab_int_hr_c2}:
\begin{displaymath}
  \frac{h_r(\theta)-h_{r0}(\theta)}{2A/r} = 
    \int_{\frac{\theta}{2\pi}-\frac{1}{2}}^{\frac{1}{2}}\cos^{2r}(\pi t')\,\cos^{2r}\frac{\theta-2\pi t'}{2}\,dt' + 
    \int_{\frac{1}{2}-\frac{\theta}{2\pi}}^{\frac{1}{2}}\cos^{2r}(\pi t')\,\cos^{2r}\frac{\theta+2\pi t'}{2}\,dt'\,.
\end{displaymath}
Here $h_{r0}$ corresponds to the stencil $(l_0,r_0,l_0',r_0')=(r,r,r-1,r-1)$, which is symmetric and it is easy to verify $h_{r0}\equiv0$; thus: $h_r(\theta) > h_{r0}(\theta) = 0$ for all $0<\theta\le\pi$.

Taking the derivative yields:
\begin{align*}
  h_r'(\theta) =&\ - 2A\int_{\frac{\theta}{2\pi}-\frac{1}{2}}^{\frac{1}{2}}\cos^{2r}(\pi t')\,\cos^{2r-1}\frac{\theta-2\pi t'}{2}\,\sin\frac{\theta-2\pi t'}{2}\,dt' - \\
    &\ 2A\int_{\frac{1}{2}-\frac{\theta}{2\pi}}^{\frac{1}{2}}\cos^{2r}(\pi t')\,\cos^{2r-1}\frac{\theta+2\pi t'}{2}\,\sin\frac{\theta+2\pi t'}{2}\,dt' \\
  =&\ -2A\int_{-\pi}^{\pi}\cos^{2r}\frac{\theta'}{2}\,\cos^{2r-1}\frac{\theta-\theta'}{2}\,\sin\frac{\theta-\theta'}{2}\,d\theta'\;.
\end{align*}
Denoting the last integral by $\mathcal{I}(\theta)$, then using integration-by-part:
\begin{align*}
  \mathcal{I}(\theta)
  &= \frac{1}{r}\cos^{2r}\frac{\theta'}{2}\,\cos^{2r}\frac{\theta-\theta'}{2}\Big|^{\pi}_{-\pi} 
  + \int_{-\pi}^{\pi}\cos^{2r}\frac{\theta-\theta'}{2}\,\cos^{2r-1}\frac{\theta'}{2}\,\sin\frac{\theta'}{2}\,d\theta' \\
  &= \int_0^\pi\cos^{2r-1}\frac{\theta'}{2}\,\sin\frac{\theta'}{2}\,\left(\cos^{2r}\frac{\theta-\theta'}{2}-\cos^{2r}\frac{\theta+\theta'}{2}\right)\,d\theta' > 0\;,\quad0<\theta<\pi\;,
\end{align*}
where the last inequality is due to $\abs{(\theta-\theta')/2}<(\theta+\theta')/2<\pi$ and $\abs{(\theta-\theta')/2}+(\theta+\theta')/2<\pi$.
Therefore, we obtain again that $h_r'(\theta)<0$ for $0<\theta<\pi$.

\medskip

Lastly, suppose $r-r'=0$ and $l-l'=1$ or $(l,r,l',r')=(r+1,r,r,r)$, $r\ge0$.
The case with $r=0$ is trivial and we assume $r\ge1$; by~\cref{eq:instab_int_hr_c3} there is:
\begin{align*}
  \frac{h_r(\theta)}{2A/l} = 
    \int_{\frac{\theta}{2\pi}-\frac{1}{2}}^{\frac{1}{2}}\cos^{2r+2}(\pi t')\,\cos^{2r}\frac{\theta-2\pi t'}{2}\,dt' + 
    \int_{\frac{1}{2}-\frac{\theta}{2\pi}}^{\frac{1}{2}}\cos^{2r+2}(\pi t')\,\cos^{2r}\frac{\theta+2\pi t'}{2}\,dt'>0\;.
\end{align*}
Here we used $h_{r0}(\theta)\equiv0$ corresponding to the stencil $(l_0,r_0,l_0',r_0')=(l-1,r,l',r')=(r,r,r,r)$.
By a parallel argument to the previous one, we can show $h_r'(\theta)<0, 0<\theta<\pi$.

\subsection{Proving~\cref{eq:prelim_stab_cond_g}}
\label{sec:stab_f}
We shall discuss the four stencils separately.

\noindent
{\it Case 1: $(l,r,l',r')=(r+1,r,r,r)$}, and the $\beta$-coefficients are:
\begin{align*}
  \beta_0 &= 2(\zeta_0^{r+1,r}+\zeta_0^{r,r}) = \frac{2}{r+1}\;, \\
  \beta_k &= -\frac{2}{k}C^{r+1,r}_kC^{r,r}_k = \left(\frac{2}{r+1}-\frac{2}{k}\right)C^{r+1,r+1}_kC^{r,r}_k\;,\quad -r\le k\le r\;, k\ne0\;.
\end{align*}
We adopt the convention that $\beta_k=0$ if $\abs{k}>r$.
Writing $Z_k = C^{r+1,r+1}_kC^{r,r}_k$ for $-r\le k\le r$ and $Z_k=0$ if $\abs{k}>r$, which satisfies $Z_k=Z_{-k}$, there is the even-odd splitting:
\begin{displaymath}
  \beta_0 = \frac{2}{r+1}Z_0\;;\quad
  \beta_k = \frac{2}{r+1}Z_k-\frac{2}{k}Z_k\;,\quad \forall k\ne0\;.
\end{displaymath}
The real and imaginary parts of $H(e^{i\theta})$ are:
\begin{displaymath}
  h_r(\theta) = \sum_{k\in\mathbb{Z}}\frac{2}{r+1}\cos(k\theta)\;,\ 
  h_i(\theta) = -\sum_{k\ne0}\frac{2}{k}Z_k\sin(k\theta)\quad
  \Rightarrow\quad
  h_i' = 2-(r+1)h_r\;.
\end{displaymath}
Now we turn the attention to $F(e^{i\theta})$ and to this end define the odd sequence:
\begin{displaymath}
  Y_k = \left\{\begin{array}{lcl}
    (\zeta_k^{r+1,r+1}+\zeta_k^{r,r})Z_k\;, & & -r\le k\le r\;, \\
    \frac{Z_r}{(2r+2)(2r+1)}\;, & & k=r+1\;, \\
    -\frac{Z_r}{(2r+2)(2r+1)}\;, & & k=-r-1\;, \\
    0\;, & & \abs{k}>r+1\;,
  \end{array}\right.
\end{displaymath}
it follows that for $-r\le k\le r$, $k\ne0$:
\begin{equation}\label{eq:stab_f_t}
  t_k 
  = \frac{1+k(\zeta^{r+1,r+1}_k+\zeta^{r,r}_k+\frac{1}{r+1-k})}{k}\left(\frac{2}{k}-\frac{2}{r+1}\right)Z_k 
  = \left(\frac{2}{k}-\frac{2}{r+1}\right)Y_k + \frac{2}{k^2}Z_k\;.
\end{equation}
and
\begin{displaymath}
  t_{-(r+1)} = \frac{2}{(r+1)^2}C^{r+1,r}_{-r-1}C^{r+1,r}_{-r-1} = \frac{2}{(r+1)^2(2r+1)}Z_r = \frac{4}{r+1}Y_{r+1}\;.
\end{displaymath}
It is thus easy to see~\cref{eq:stab_f_t} holds for all $k\ne0$; 
and the real and imaginary parts of $F(e^{i\theta})$ are:
\begin{displaymath}
  f_r(\theta) = \sum_{k\ne0}\left(\frac{2Y_k}{k}+\frac{2Z_k}{k^2}\right)(\cos(k\theta)-1)\;,\ 
  f_i(\theta) = -\sum_{k\in\mathbb{Z}}\frac{2Y_k}{r+1}\sin(k\theta)\ \Rightarrow\ 
  f_r' = (r+1)f_i+h_i\;.
\end{displaymath}
By~\cref{sec:stab_h}, $h_r>0$ for all $0<\theta<2\pi$, thus we may define:
\begin{displaymath}
  m(\theta) = \frac{f_i(\theta)}{h_r(\theta)},
\end{displaymath}
and compute:
\begin{displaymath}
  q(\theta) 
  = h_r(h_rf_r+h_if_i)+f_i^2 
  = h_r^2(m^2+mh_i+f_r)\;.
\end{displaymath}
Let $S(\theta)=m^2+mh_i+f_r$, we can compute its derivative:
\begin{align}
  \label{eq:stab_f_ds}
  S' 
  = 2m m'+m'h_i+m(2-(r+1)h_r)+(r+1)f_i+h_i 
     = (m'+1)(2m+h_i)\;.
\end{align}
As $f_i(0)=f_r(0)=0$, one has $S(0)=0$; the plan is to show $S'(\theta)<0$ for $0<\theta<\pi$, which indicates $S(\theta)<0$ on $(0,\pi]$ and the negativity of $S$ over the other half $\pi\le\theta<2\pi$ can be obtained by symmetry.
Noting that:
\begin{displaymath}
  (r+2+k)(r+1+k)Z_{k+1} = (r+1-k)(r-k)Z_k\;,\quad\forall k\;,
\end{displaymath}
thus we have:
\begin{align*}
  \sum_{k\in\mathbb{Z}}(r+1-k)(r-k)Z_ke^{i(k+1/2)\theta} 
  &= \sum_{k=\mathbb{Z}}(r+1+k)(r+k)Z_ke^{i(k-1/2)\theta}\;.
\end{align*}
Writing for each $n\ge0$:
\begin{displaymath}
  [Z]^c_n = \sum_{k\in\mathbb{Z}}k^nZ_k\cos(k\theta)\;,\quad
  [Z]^s_n = \sum_{k\in\mathbb{Z}}k^nZ_k\sin(k\theta)\;,
\end{displaymath}
then comparing the imaginary parts of the two sides in the previous identity gives:
\begin{displaymath}
  (2r+1)\cos\frac{\theta}{2}[Z]^s_1 = \sin\frac{\theta}{2}\left(r(r+1)[Z]^c_0+[Z]^c_2\right)\;.
\end{displaymath}
Noting $(r+1)h_r=2[Z]^c_0$, which indicates $(r+1)h_r'=-2[Z]^s_1$, $(r+1)h_r''=-2[Z]^c_2$, one has:
\begin{equation}\label{eq:stab_f_ode_hr}
  h_r''-(2r+1)\cot\frac{\theta}{2}h_r'-r(r+1)h_r = 0\;,\quad 0<\theta<2\pi\;.
\end{equation}
Similarly, for all $-r\le k\le r-1$:
\begin{align*}
  Y_{k+1} &= \left(\zeta^{r+1,r+1}_k+\zeta^{r,r}_k+\frac{1}{r+2+k}+\frac{1}{r+1-k}+\frac{1}{r+1+k}+\frac{1}{r-k}\right)Z_{k+1} \\
  &= \frac{(r+1-k)(r-k)}{(r+2+k)(r+1+k)}Y_k + \frac{2r+3+2k}{(r+2+k)(r+1+k)}Z_{k+1}+\frac{2r+1-2k}{(r+2+k)(r+1+k)}Z_k\;,
\end{align*} 
or equivalently:
\begin{equation}\label{eq:stab_f_rec_y}
  (r+2+k)(r+1+k)Y_{k+1} = (r+1-k)(r-k)Y_k + (2r+3+2k)Z_{k+1} + (2r+1-2k)Z_k\;,
\end{equation}
and one can easily verify that it holds for all integers $k$.
Defining the moments:
\begin{displaymath}
  [Y]^c_n = \sum_{k\in\mathbb{Z}}k^nY_k\cos(k\theta)\;,\quad
  [Y]^s_n = \sum_{k\in\mathbb{Z}}k^nY_k\sin(k\theta)\;,
\end{displaymath}
Multiplying~\cref{eq:stab_f_rec_y} by $e^{i(k+1/2)\theta}$, summing over $k\in\mathbb{Z}$, and comparing the real parts, one gets:
\begin{displaymath}
  r(r+1)\sin\frac{\theta}{2}[Y]^s_0+(2r+1)\cos\frac{\theta}{2}[Y]^c_1+\sin\frac{\theta}{2}[Y]^s_2 = (2r+1)\cos\frac{\theta}{2}[Z]^c_0 + 2\sin\frac{\theta}{2}[Z]^s_1\;.
\end{displaymath}
Since $f_i=\frac{2}{r+1}[Y]^s_0$, $f_i'=\frac{2}{r+1}[Y]^c_1$, and $f_i''=-\frac{2}{r+1}[Y]^s_2$, one has:
\begin{equation}\label{eq:stab_f_ode_fi}
  f_i''-(2r+1)\cot\frac{\theta}{2}f_i'-r(r+1)f_i = (2r+1)\cot\frac{\theta}{2}h_r-2h_r'\;.
\end{equation}

Computing $f_i\times\cref{eq:stab_f_ode_hr}-h_r\times\cref{eq:stab_f_ode_fi}$ gives:
\begin{displaymath}
  f_ih_r''-h_rf_i''+(2r+1)\cot\frac{\theta}{2}(f_i'h_r-f_ih_r')-2h_r'h_r+(2r+1)\cot\frac{\theta}{2}h_r^2 = 0
\end{displaymath}
or
\begin{displaymath}
  (h_r^2+f_i'h_r-f_ih_r')' = (2r+1)\cot\frac{\theta}{2}(h_r^2+f_i'h_r-f_ih_r')\;.
\end{displaymath}
Solving this ODE for $h_r^2+f_i'h_r-f_ih_r'$ leads to:
\begin{equation}\label{eq:stab_f_ode_sol}
  h_r^2+f_i'h_r-f_ih_r' = C_2e^{-(2r+1)\int_{\theta}^\pi\cot\frac{\theta'}{2}d\theta'}
  = C_2\sin^{2(2r+1)}\frac{\theta}{2}\;,
\end{equation}
for some constant $C_2$. 
Moving on, \cref{eq:stab_f_ode_sol} can be written as:
\begin{displaymath}
  h_r^2 + h_r^2m' = C_2\sin^{2(2r+1)}\frac{\theta}{2}\,
\end{displaymath}
Because $m(0)=m(\pi)=0$, one has:
\begin{displaymath}
  0 = \int_0^\pi m'(\theta)\,d\theta = \int_0^\pi \left[\frac{C_2\sin^{2(2r+1)}\frac{\theta}{2}}{[h_r(\theta)]^2}-1\right]\,d\theta\;,
\end{displaymath}
therefore one must have $C_2>0$; hence the first term in the expression of $S'$ in~\cref{eq:stab_f_ds} is always positive: $1+m'>0$ for all $0<\theta<2\pi$.

Now we consider the second term in~\cref{eq:stab_f_ds}, i.e., $2m+h_i$.
On the one hand, it is clear $2m(0)+h_i(0)=2m(\pi)+h_i(\pi)=0$.
On the other hand, one computes:
\begin{align*}
  (2m+h_i)' = 2m'+h_i' = 2m'+2-(r+1)h_r = \frac{2C_2\sin^{2(2r+1)}\frac{\theta}{2}}{[h_r(\theta)]^2} - (r+1)h_r(\theta)\;.
\end{align*}
Because $h_r(\theta)>0$ is strictly decreasing on $(0, \pi]$, see~\cref{sec:stab_h}, we see $(2m+h_i)'$ is a strictly increasing function of $\theta$; therefore $\int_0^{\pi}(2m+h_i)'d\theta=0$ indicates $2m(\theta)+h_i(\theta)$ first decreases from $0$ then increases to $0$ over the interval $[0, \pi]$, and we conclude that $2m+h_i<0$ for all $0<\theta<\pi$.
This concludes the proof that $S'(\theta)<0$ and by the discussion below~\cref{eq:stab_f_ds}, \cref{eq:prelim_stab_cond_g} holds for the first case $(l,r,l',r')=(r+1,r,r,r)$.

\medskip

\noindent
{\it Case 2: $(l,r,l',r')=(r,r,r,r-1)$}. 
We shall modify the definitions of the $Z$- and the $Y$-sequences and then proceed similarly as before:
\begin{align*}
  \beta_0 = \frac{2}{r} = \frac{2}{r}Z_0\;;\quad\beta_k = \left(\frac{2}{r}-\frac{2}{k}\right)Z_k\;,\quad \forall k\in\mathbb{Z},\ k\ne0\;, 
\end{align*}
where $Z_k = [C^{r,r}_k]^2$ for $\abs{k}\le r$ and zero otherwise; and:
\begin{displaymath}
  t_k 
  = \left(\frac{2}{k}-\frac{2}{r}\right)Y_k + \frac{2}{k^2}Z_k\;,\quad -r\le k\le r,\ k\ne0\;,
\end{displaymath}
where $Y_k = 2\zeta_k^{r,r}Z_k, \forall k$.
Using these symbols, one has:
\begin{align*}
  &h_r = \frac{2}{r}\sum_{k\in\mathbb{Z}}Z_k\cos(k\theta)\,,\quad
  h_i = -\sum_{k\ne0}\frac{2}{k}Z_k\sin(k\theta)\,,\quad
  h_i' = 2-rh_r\;; \\
  &f_r = \sum_{k\ne0}\left(\frac{2Y_k}{k}+\frac{2Z_k}{k^2}\right)(\cos(k\theta)-1)\,,\quad
  f_i = -\frac{2}{r}\sum_{k\in\mathbb{Z}}Y_k\sin(k\theta)\,,\quad
  f_r' = rf_i+h_i\;.
\end{align*}
Therefore, defining $m=f_i/h_r$ one still has $q=h_r^2S$ with $S=m^2+mh_i+f_r$, which satisfies $S(0)=0$ and the same derivative formula:
\begin{displaymath}
  S'=2m m'+m'h_i+m(2-rh_r)+rf_i+h_i=(m'+1)(2m+h_i)\;.
\end{displaymath}
One can verify $(r+1+k)^2Z_{k+1}=(r-k)^2Z_k$ and $(r+1+k)^2Y_{k+1}=(r-k)^2Y_k+2(r+1+k)Z_{k+1}+2(r-k)Z_k$; multiplying them by $e^{i(k+1/2)\theta}$, summing over all $k$, and comparing the imaginary parts and real parts of the two resulting equations, respectively, one gets:
\begin{align*}
  &-r^2\sin\frac{\theta}{2}[Z]^c_0+2r\cos\frac{\theta}{2}[Z]^s_1-\sin\frac{\theta}{2}[Z]^c_2 = 0\;, \\ 
  &r^2\sin\frac{\theta}{2}[Y]^s_0+2r\cos\frac{\theta}{2}([Y]^c_1-[Z]^c_0)+\sin\frac{\theta}{2}([Y]^s_2-2[Z]^s_1) = 0\;, 
\end{align*}
which gives:
\begin{displaymath}
  h_r'' - 2r\cot\frac{\theta}{2}h_r' - r^2h_r  = 0\;,\quad
  f_i''-2r\cot\frac{\theta}{2}f_i'-r^2f_i = 2r\cot\frac{\theta}{2}h_r-2h_r'\;.
\end{displaymath}
Therefore:
\begin{displaymath}
  \frac{(h_r^2+f_i'h_r-f_ih_r')'}{h_r^2+f_i'h_r-f_ih_r'}=2r\cot\frac{\theta}{2}\quad\Rightarrow\quad
  h_r^2(1+m') = h_r^2+f_i'h_r-f_ih_r' = C_2\sin^{4r}\frac{\theta}{2}
\end{displaymath}
for some $C_2>0$, which indicates $1+m'>0$ for $0<\theta<2\pi$.
As $2m(0)+h_i(0)=2m(\pi)+h_i(\pi)=0$ and $(2m+h_i)'=2m'+2-rh_r=\frac{2C_2\sin^{4p}\frac{\theta}{2}}{h_r^2}-rh_r$ is an increasing function on $(0,\pi]$, one gets $2m+h_i<0$ for $0<\theta\le\pi$, which shows $S'<0$ and completes the proof.

\medskip

\noindent
{\it Case 3: $(l,r,l',r')=(r+1,r,r+1,r)$}. 
We define $Z_k=\left[C^{r+1,r+1}_k\right]^2,\ \abs{k}\le r+1$ and $Z_k=0,\ \abs{k}>r+1$, and $Y_k=2\zeta_k^{r+1,r+1}Z_k,\ \forall k$, then:
\begin{align*}
  &\beta_0 = \frac{4}{r+1}\;;\quad
  \beta_k = \left(\frac{4}{r+1}-\frac{2k}{(r+1)^2}-\frac{2}{k}\right)Z_k\;,\quad\forall k\ne0\;; \\
  &t_k 
  = \left(\frac{2k}{(r+1)^2}+\frac{2}{k}-\frac{4}{r+1}\right)Y_k + \left(\frac{2}{k^2}-\frac{2}{(r+1)^2}\right)Z_k\;,\quad \abs{k}\le r+1,\ k\ne0\;. 
\end{align*}
Defining the moments $[Z]^c_n$, $[Z]^s_n$, $[Y]^c_n$, and $[Y]^s_n$ as before, one has:
\begin{align*}
  &h_r = \frac{4[Z]^c_0}{r+1}\,,\ 
  h_i = -\frac{2[Z]^s_1}{(r+1)^2}-\sum_{k\ne0}\frac{2}{k}Z_k\sin(k\theta)\ \Rightarrow\ 
  h_i' = 2-\frac{r+1}{2}h_r-\frac{2[Z]^c_2}{(r+1)^2}\,, \\
  &f_r = \sum_{k\ne0}\left[\left(\frac{2k}{(r+1)^2}+\frac{2}{k}\right)Y_k+\left(\frac{2}{k^2}-\frac{2}{(r+1)^2}\right)Z_k\right](\cos(k\theta)-1)\,, \\
  &f_i = -\frac{4}{r+1}[Y]^s_0\ \Rightarrow\ f_r' = \frac{r+1}{2}f_i+h_i+\frac{4[Z]^s_1-2[Y]^s_2}{(r+1)^2}\;.
\end{align*}
In this case, defining $m$ and $S$ as before one obtains:
\begin{equation}\label{eq:stab_f_c3_dS}
  S' = 2m m'+m'h_i + mh_i' + f_r' = (m'+1)(2m+h_i) + m\left(-\frac{2[Z]^c_2}{(r+1)^2}\right) +\frac{4[Z]_1^s-2[Y]^s_2}{(r+1)^2}
\end{equation}

Noticing that $\{Z_k\}$ and $\{Y_k\}$ are precisely those defined in Case 2 when the stencil is given by $(l_0,r_0,l_0',r_0')=(r+1,r+1,r+1,r)$,
we shall denote all symbols (except $Z$ and $Y$) associated with this stencil by the subscript $_0$.
Following the calculation in Case 2 one has:
\begin{align*}
  &h_{0r} = \frac{2}{r_0}[Z]^c_0 = \frac{2}{r+1}[Z]^c_0 = \frac{1}{2}h_r\;, \\
  &h_{0i} = -\sum_{k\ne0}\frac{2}{k}Z_k\sin(k\theta) = h_i+\frac{2[Z]^s_1}{(r+1)^2} = h_i - \frac{1}{r+1}h_{0r}'\;, \\
  &f_{0i} = -\frac{2}{r_0}[Y]^s_0 = -\frac{2}{r+1}[Y]^s_0 = \frac{1}{2}f_i\;.
\end{align*}
Therefore $m_0 = \frac{f_{0i}}{h_{0r}} = \frac{f_i}{h_r} = m$.
Hence moving on from~\cref{eq:stab_f_c3_dS} there is:
\begin{align*}
  S' 
  &= (m_0'+1)\left(2m_0+h_{0i}+\frac{1}{r+1}h_{0r}'\right) + \frac{1}{r+1}m_0h_{0r}'' - \frac{2}{r+1}h_{0r}' - \frac{1}{r+1}f_{0i}'' \\
  &= S_0'+\frac{(m_0'+1)h_{0r}'}{r+1}-\frac{(h_{0r}^2+f_{0i}'h_{0r}-f_{0i}h_{0r}')'}{(r+1)h_{0r}} 
  = S_0'+\frac{(m_0'+1)h_{0r}'}{r+1}-2\cot\frac{\theta}{2}(1+m_0') < 0\;,
\end{align*}
for all $0<\theta<\pi$, where we used $1+m_0>0$, $h_{0r}'<0$, and $\cot\frac{\theta}{2}>0$ on $(0,\pi)$. 

\medskip

\noindent
{\it Case 4: $(l,r,l',r')=(r+1,r,r,r-1)$}. The strategy is similar to the one in the previous case. 
Using the same $\{Z_k\}$ and $\{Y_k\}$ as in Case 1, 
\begin{align*}
  &\beta_0 = 2(\zeta_0^{r+1,r}+\zeta_0^{r,r-1}) = \frac{2(2r+1)}{r(r+1)}Z_0\;;\quad
  \beta_k 
          = \left(\frac{2(2r+1)}{r(r+1)}-\frac{2}{k}-\frac{2k}{r(r+1)}\right)Z_k\;,\quad \forall k\ne0\;, \\
  &t_k = 
    \left(\frac{2}{k}-\frac{2(2r+1)}{r(r+1)}+\frac{2k}{r(r+1)}\right)Y_k + \left(\frac{2}{k^2}-\frac{2}{r(r+1)}\right)Z_k\;,\quad -r-1\le k\le r,\ k\ne0\;. 
\end{align*}
Designating all symbols (except $Y$ and $Z$) associted with the stencil $(l_0,r_0,l_0',r_0')=(r+1,r,r,r)$ by the subscript $_0$, we have the following representations and relations:
\begin{align*}
  &h_r = \frac{2(2r+1)}{r(r+1)}[Z]^c_0 = \frac{2r+1}{r}h_{0r}\;,\ h_i = -\frac{2[Z]^s_1}{r(r+1)}-\sum_{k\ne0}\frac{2}{k}Z_k\sin(k\theta) = h_{0i}+\frac{1}{r}h_{0r}'\;, \\
  &f_r = \sum_{k\ne0}\left[\left(\frac{2}{k}+\frac{2k}{r(r+1)}\right)Y_k+\left(\frac{2}{k^2}-\frac{2}{r(r+1)}\right)Z_k\right](\cos(k\theta)-1)\;,\\
  &f_i = -\frac{2(2r+1)}{r(r+1)}[Y]^s_0 = \frac{2r+1}{r}f_{0i}\;;
\end{align*}
furthremore:
\begin{displaymath}
  h_i' = 2 - \frac{r(r+1)}{2r+1}h_r + \frac{1}{r}h_{0r}''\;,\quad
  f_r' = h_i + \frac{r(r+1)}{2r+1}f_i - \frac{1}{r}f_{0i}''-\frac{2}{r}h_{0r}'\;.
\end{displaymath}
One has $m=\frac{f_i}{h_r} = \frac{f_{0i}}{h_{0r}} = m_0$ and:
\begin{align*}
  S' &= 2m m' + m'h_i + m\left(2-\frac{r(r+1)}{2r+1}h_r+\frac{1}{r}h_{0r}''\right)+h_i+\frac{r(r+1)}{2r+1}f_i-\frac{1}{r}f_{0i}''-\frac{2}{r}h_{0r}' \\
  &= (m_0'+1)(2m_0+h_{0i}) + \frac{(m_0'+1)h_{0r}'}{r} - \frac{(h_{0r}^2+f_{0i}'h_{0r}-f_{0i}h_{0r}')'}{rh_{0r}} \\
  &= S_0' + \frac{(m_0'+1)h_{0r}'}{r} - 2h_{0r}(1+m_0')\cot\frac{\theta}{2} < 0\;,\quad 0<\theta<\pi\;. 
\end{align*}
Here again we used the fact that $S_0'<0$, $m_0'+1>0$, $h_{0r}'<0$, as already proved in Case 1.

\section{$L^2$ stability of central schemes}
\label{sec:l2}
Lastly, we briefly discuss the stability of central HV schemes, where $l=r$ and $l'=r'$.
In this case the $\alpha-$ and $\beta-$ coefficients are:
\begin{align}
  \label{eq:l2_alpha}
  &\alpha_\nu = -\alpha_{-1-\nu}^c\;,\quad -r\le \nu < 0\;;\quad 
  \alpha_\nu = \sum_{k=\nu+1}^r t_k\;,\quad 0\le\nu\le r-1\;; \\
  \label{eq:l2_beta} 
  &\beta_0 = 0\;;\quad
  \beta_\nu = -\frac{2}{\nu}C^{r,r}_\nu C^{r',r'}_\nu = -\beta_{-\nu}\;,\quad -r'\le\nu\le r',\ \nu\ne0\;.
\end{align}
Here $t_k, 1\le k\le r$ are defined by~\cref{eq:prelim_coef_t_pos} and repeated below:
\begin{equation}\label{eq:l2_t}
  \left\{\begin{array}{lcl}
    \textrm{If } r'=r: & & t_k = \frac{2(1+2k\zeta^{r,r}_k)}{k^2}\left[C^{r,r}_k\right]^2\;,\quad 1\le k\le r\;; \\
    \textrm{If } r'=r-1: & & t_k = \left\{\begin{array}{lcl}
      \frac{2(1+k(\zeta^{r,r}_k+\zeta^{r',r'}_k))}{k^2}C^{r,r}_kC^{r',r'}_k\;, & & 1\le k\le r-1\;, \\
      \frac{2}{r^2}C^{r,r}_rC^{r-1,r}_r\;, & & k=r\;.
    \end{array}\right.
  \end{array}\right.
\end{equation}
In this case, $H(e^{i\theta})$ is pure imaginary and $F(e^{i\theta})$ is real:
\begin{displaymath}
  H(e^{i\theta}) = 2i\sum_{k=1}^{r'}\beta_k\sin\theta = ih_i\;,\quad
  F(e^{i\theta}) = 2\sum_{k=1}^rt_k(\cos(k\theta)-1) = f_r\;.
\end{displaymath}
Thus it is easy to verify that $\lambda=\lambda_{1,2}(\theta)\in\mathcal{S}$, see~\cref{eq:prelim_semi_eigset}, are given by:
\begin{align}
  \label{eq:l2_eigval}
  \lambda_{1,2}(\theta) = \frac{1}{2}\left(H \pm \sqrt{H^2+4F}\right) 
  = \frac{i}{2}\left[h_i \pm \sqrt{h_i^2+8\sum_{k=1}^rt_k(1-\cos{k\theta})}\right]\;,
\end{align}
which are all on the imaginary axis as all $t_k$ are positive.

Next we show that~\cref{eq:l2_eigval} indicates the semi-discretized HV scheme are $L^2$-stable.
Consider a uniform grid with $N$ cells and denote the solution vector:
\begin{equation}\label{eq:l2_solvec}
  \bs{u}(t) = \left[\overline{u}_{1/2}, \overline{u}_{3/2}, \cdots, \overline{u}_{N-1/2}, u_0, u_1, \cdots, u_{N-1}\right]^T\;,
\end{equation}
then it solves the linear equation:
\begin{equation}\label{eq:l2_semi}
  \bs{u}' + \frac{1}{h}\bs{M}\bs{u} = \bs{0}\;,\quad\bs{M} = \begin{bmatrix}
    \bs{0} & \bs{S}-\bs{I} \\ G(\bs{S}) & H(\bs{S})
  \end{bmatrix}\;,\quad\textrm{c.f.}~\cref{eq:prelim_semi_mat}. 
\end{equation}
Here $\bs{0}$ and $\bs{I}$ are $N\times N$ zero matrix and identity matrix, respectively; and $\bs{S}$ is cyclic:
\begin{equation}\label{eq:l2_s}
  \bs{S} = \begin{bmatrix}
    0      & 1      & \cdots & 0      & 0      \\
    0      & 0      & \cdots & 0      & 0      \\
    \vdots & \vdots & \ddots & \vdots & \vdots \\
    0      & 0      & \cdots & 0      & 1      \\
    1      & 0      & \cdots & 0      & 0
  \end{bmatrix}\;.
\end{equation}
The matrix $\bs{S}$ is diagonalizable with eigenvalues $s_j=e^{i2j\pi/N}, j=1,\cdots,N$; an unit eigenvector associated with $s_j$ is:
\begin{equation}\label{eq:l2_eigvec}
  \bs{v}_j = \frac{1}{\sqrt{N}}\left[1, s_j, s_j^2, \cdots, s_j^{N-1}\right]^T\;.
\end{equation}
Let $\bs{Q}_j=\begin{bmatrix} 0 & s_j-1 \\ G(s_j) & H(s_j)\end{bmatrix}$, by the discussion below~\cref{eq:prelim_semi_mat} the eigenvalues of $\bs{Q}_j$ are $\lambda_{1,2}(\theta_j)$, $\theta_j=2j\pi/N$ given by~\cref{eq:l2_eigval}, which are distinct.  
Hence $\bs{Q}_j$ is diagonalizable and we may write:
\begin{displaymath}
  \bs{Q}_j = \bs{W}_j\begin{bmatrix} \lambda_1(\theta_j) & 0 \\ 0 & \lambda_2(\theta_j) \end{bmatrix}\bs{W}_j^{-1}\;.
\end{displaymath}
By direct calculation, specially using the fact that $P(\bs{S})\bs{v}_j = P(s_j)\bs{v}_j$ for all Laurent polynomial $P(\cdot)$, one can show that:
\begin{displaymath}
  \bs{Q}(\bs{W}_j\otimes\bs{v}_j) = (\bs{W}_j\otimes\bs{v}_j)\begin{bmatrix} \lambda_1(\theta_j) & 0 \\ 0 & \lambda_2(\theta_j) \end{bmatrix}\;,
\end{displaymath}
therefore the two columns of $\bs{W}_j\otimes\bs{v}_j$ are eigenvectors of $\bs{M}$ associated with the eigenvalues $\lambda_{1,2}(\theta_j)$.
It is trivial to verify that the $2N$ vectors composed of columns of $\bs{W}_j\otimes\bs{v}_j, j=1,2,\cdots,N$ are linearly independent, therefore $\bs{M}$ is diagonalizable:
\begin{equation}\label{eq:l2_m_diag}
  \bs{M} = \bs{U}\bs{D}\bs{U}^{-1},\ 
  \bs{U} = \left[\bs{W}_1\otimes\bs{v}_1\ \cdots\ \bs{W}_N\otimes\bs{v}_N\right],\ 
  \bs{D} = \begin{bmatrix}
    \lambda_1(\theta_1) \\
    & \lambda_2(\theta_1) \\
    & & \ddots \\
    & & & \lambda_2(\theta_N)
  \end{bmatrix}\;.
\end{equation}
By~\cref{eq:l2_eigval}, it is clear that $\bs{D}+\overline{\bs{D}} = \bs{0}$.
To prove the $L^2$-stability of the semi-discretized solution $\bs{u}(t)$, we define $\bs{w}=\bs{U}^{-1}\bs{u}$, which satisfies:
\begin{displaymath}
  \bs{w}' = \bs{U}^{-1}\bs{u}' = -\frac{1}{h}\bs{U}^{-1}\bs{M}\bs{u} = -\frac{1}{h}\bs{D}\bs{w}\;.
\end{displaymath}
Therefore:
\begin{displaymath}
  \frac{d}{dt}\nrm{\bs{w}}^2 = \overline{\bs{w}}^T\bs{w}' + \overline{\bs{w}'}^T\bs{w}
  = \frac{1}{h}\overline{\bs{w}}^T(-\bs{D}\bs{w}) + \frac{1}{h}(-\overline{\bs{D}}\overline{\bs{w}})^T\bs{w}
  = 0\;.
\end{displaymath}
It follows that for all $t\ge0$:
\begin{displaymath}
  \nrm{\bs{u}(t)} = \nrm{\bs{U}\bs{w}(t)} \le \nrm{\bs{U}}\nrm{\bs{w}(t)} = \nrm{\bs{U}}\nrm{\bs{w}(0)} \le \nrm{\bs{U}}\nrm{\bs{U}^{-1}}\nrm{\bs{u}(0)}\;,
\end{displaymath}
where $\nrm{\cdot}$ denotes the $2$-norm;
and it remains to bound the condition number of $\bs{U}$.
\begin{lemma}\label{lm:l2_cond}
  One can construct $\bs{U}$ so that its condition number has the following bound:
  \begin{equation}\label{eq:l2_cond_est}
    \kappa(\bs{U}) \eqdef \nrm{\bs{U}}\nrm{\bs{U}^{-1}} \le \sqrt{\frac{\max(1,\sum_{k=1}^rk^2t_k)}{\min(1,t_1)}}\;.
  \end{equation}
\end{lemma}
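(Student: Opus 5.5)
The plan is to exploit the block structure of $\bs{U}$ and choose each $\bs{W}_j$ so that it is a diagonal scaling times a unitary matrix. The freedom "one can construct $\bs{U}$" lies entirely in the choice of the eigenvector matrices $\bs{W}_j$ of $\bs{Q}_j$.

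\emph{Step 1: reduce to the $2\times2$ blocks.} Let $\bs{V}=[\bs{v}_1\ \cdots\ \bs{v}_N]$, which is unitary. The two columns of $\bs{W}_j\otimes\bs{v}_j$ are $[w_{1}\bs{v}_j;\,w_{2}\bs{v}_j]$, where $(w_1,w_2)^T$ runs over the columns of $\bs{W}_j$. Hence $\bs{U}=(\bs{I}_2\otimes\bs{V})\,\bs{P}\,\mathrm{blockdiag}(\bs{W}_1,\dots,\bs{W}_N)$ for a permutation matrix $\bs{P}$. Therefore
\[
\nrm{\bs{U}}=\max_j\nrm{\bs{W}_j}\quad\textrm{and}\quad \nrm{\bs{U}^{-1}}=\max_j\nrm{\bs{W}_j^{-1}}.
\]
It suffices to bound these two quantities uniformly in $j$.

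\emph{Step 2: symmetrize $\bs{Q}_j$ by a diagonal scaling.} Write $s=e^{i\theta}$ with $\theta\ne0 \pmod{2\pi}$, and let $\bs{\Delta}=\mathrm{diag}(a,1)$ with $a>0$. Then
\[
\bs{\Delta}^{-1}\bs{Q}_j\bs{\Delta}=\begin{bmatrix}0 & (s-1)/a\\ aG(s) & H(s)\end{bmatrix}.
\]
Here $H(s)=ih_i$ is purely imaginary. Also $G=F/(s-1)=F(\overline{s}-1)/\abs{s-1}^2$ with $F=f_r=-2\sum_k t_k(1-\cos k\theta)<0$. Consequently the choice
\[
a^2=\frac{\abs{s-1}^2}{-F}=\frac{1-\cos\theta}{\sum_{k=1}^r t_k(1-\cos k\theta)}
\]
gives $aG=-\overline{(s-1)/a}$, so $\bs{\Delta}^{-1}\bs{Q}_j\bs{\Delta}$ is skew-Hermitian. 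It is therefore unitarily diagonalizable, $\bs{\Delta}^{-1}\bs{Q}_j\bs{\Delta}=\tilde{\bs{U}}_j\,\mathrm{diag}(\lambda_1,\lambda_2)\,\tilde{\bs{U}}_j^{*}$. We then take $\bs{W}_j=\bs{\Delta}\tilde{\bs{U}}_j$, which yields
\[
\nrm{\bs{W}_j}\le\max(1,a)\quad\textrm{and}\quad \nrm{\bs{W}_j^{-1}}\le\max(1,1/a).
\]
The degenerate mode $\theta_N=2\pi$ (i.e. $s=1$) is treated separately. There $H(1)=0$ since $\beta$ is odd, and $G(1)=\sum_\nu\alpha_\nu=0$ by the antisymmetry~\cref{eq:l2_alpha}. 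Hence $\bs{Q}_N=\bs{0}$ and we simply take $\bs{W}_N=\bs{I}$.

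\emph{Step 3: bound $a$ uniformly.} This is the only quantitative step, and it rests on two elementary facts about $1-\cos k\theta=2\sin^2(k\theta/2)$. Since all $t_k>0$, dropping the terms $k\ge2$ gives
\[
\sum_k t_k(1-\cos k\theta)\ge t_1(1-\cos\theta),\quad\textrm{so}\quad a^2\le 1/t_1.
\]
In the other direction, the inequality $\abs{\sin(k\phi)}\le k\abs{\sin\phi}$ (by induction, or from the Chebyshev form $\sin k\phi/\sin\phi=U_{k-1}(\cos\phi)$) gives $1-\cos k\theta\le k^2(1-\cos\theta)$. Hence
\[
a^{-2}\le\sum_{k=1}^r k^2t_k.
\]
Combining these bounds,
\[
\max(1,a)\le 1/\sqrt{\min(1,t_1)}\quad\textrm{and}\quad \max(1,1/a)\le\sqrt{\max\bigl(1,\textstyle\sum_k k^2t_k\bigr)},
\]
and multiplying them via Step 1 gives~\cref{eq:l2_cond_est}. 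The main point to get right is Step 2, namely finding the scaling that makes $\bs{Q}_j$ skew-Hermitian; it uses in an essential way that $F$ is real and nonpositive for central stencils. Once that scaling is identified, the remaining estimates are routine.
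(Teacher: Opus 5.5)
Your proposal is correct and follows essentially the paper's argument. Your real scaling $\mathrm{diag}(a,1)$ with $a^2=1/\tilde{g}$ plays the role of the paper's $\bs{A}=\mathrm{diag}(e^{i\theta/4},e^{-i\theta/4}\sqrt{\tilde{g}})$, making $\bs{Q}_j$ skew-Hermitian; the unitary block structure then reduces $\kappa(\bs{U})$ to the extremal singular values of these scalings, and $t_1\le\tilde{g}\le\sum_k k^2t_k$ gives the bound. Your explicit treatment of the $s=1$ mode, where $\bs{Q}_N=\bs{0}$ and the paper's $\tilde{g}$ (with $\sin\phi$ in the denominator) and its "distinct eigenvalues" claim both break down, is a small improvement in rigor.
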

\begin{proof}
  First we seek and estimate the eigenvectors of $\bs{Q}(\theta)\eqdef\begin{bmatrix} 0 & e^{i\theta}-1 \\ G(e^{i\theta}) & H(e^{i\theta})\end{bmatrix}$, denoted $\bs{W}(\theta) = [\bs{W}_1(\theta)\ \bs{W}_2(\theta)]$.
  To this end, defining $\phi=\theta/2$:
  \begin{align*}
    e^{i\phi}G(e^{i\theta}) 
    = 2i\sum_{k=1}^rt_k\sum_{\nu=0}^{k-1}\sin(2\nu+1)\phi = 2i\sum_{k=1}^rt_k\frac{\sin^2{k\phi}}{\sin\phi}\;.
  \end{align*}
  Since $\abs{\sin{k\phi}/\sin\phi}\le k$ for all $0<\phi<\pi$, we have:
  \begin{equation}\label{eq:l2_sing}
    \tilde{g}\eqdef\frac{e^{i\phi}G}{2i\sin\phi} = \sum_{k=1}^rt_k\frac{\sin^2{k\phi}}{\sin^2\phi}\quad\Rightarrow\quad
    0< t_1 \le \tilde{g} \le \sum_{k=1}^rk^2t_k\;.
  \end{equation}
  Defining a unitary matrix $\bs{A}(\theta) = \begin{bmatrix} e^{i\theta/4} & 0 \\ 0 & e^{-i\theta/4}\sqrt{\tilde{g}} \end{bmatrix}$, one computes:
  \begin{displaymath}
    \bs{P}(\theta) \eqdef \bs{A}^{-1}\bs{Q}\bs{A} = \begin{bmatrix}
      0 & (e^{i\phi}-e^{-i\phi})\sqrt{\tilde{g}} \\ e^{i\phi}G/\sqrt{\tilde{g}} & H
    \end{bmatrix} = i\begin{bmatrix}
      0 & 2\sin\phi\,\sqrt{\tilde{g}} \\ 2\sin\phi\,\sqrt{\tilde{g}} & h_i 
    \end{bmatrix}\;.
  \end{displaymath}
  Because $\bs{P}(\theta)/i$ is real symmetric, it has real eigenvectors $\tilde{\bs{W}}_1(\theta)$ and $\tilde{\bs{W}}_2(\theta)$ such that the matrix $\tilde{\bs{W}}(\theta) = \left[\tilde{\bs{W}}_1\ \tilde{\bs{W}}_2\right]$ is orthogonal.
  As $\bs{P}$ is similar to $\bs{Q}$, we may construct a pair of eigenvectors of the latter as: $\bs{W} = \bs{A}\tilde{\bs{W}}$.

  Picking $\bs{W}_j = \bs{W}(\theta_j)$ as discussed before, we can compute:
  \begin{align*}
    \bs{U} = \begin{bmatrix}
      \bs{W}_1\otimes\bs{v}_1 & \cdots & \bs{W}_N\otimes\bs{v}_N
    \end{bmatrix} = \begin{bmatrix}
      \bs{I}_2\otimes\bs{v}_1 & \cdots & \bs{I}_2\otimes\bs{v}_N
    \end{bmatrix}\begin{bmatrix}
      \bs{W}_1 \\ & \bs{W}_2 \\ & & \ddots \\ & & & \bs{W}_N
    \end{bmatrix}\;,
  \end{align*}
  where $\bs{I}_2$ is the $2\times2$ identity matrix.
  Since $\begin{bmatrix}\bs{I}_2\otimes\bs{v}_1 & \cdots & \bs{I}_2\otimes\bs{v}_N\end{bmatrix}$ is clearly unitary, one has:
  \begin{displaymath}
    \kappa(\bs{U}) = \frac{\textrm{Largest singular values of }\bs{W}_j \textrm{ across all } 1\le j\le N}{\textrm{Smallest singular values of }\bs{W}_j \textrm{ across all } 1\le j\le N}\;.
  \end{displaymath}
  By previous analysis, $\bs{W} = \bs{A}\tilde{\bs{W}}$, where $\tilde{\bs{W}}$ is orthogonal; thus the singular values of $\bs{W}$ identify with those of $\bs{A}$, i.e., $1$ and $\sqrt{\tilde{g}}$.
  Then the estimate~\cref{eq:l2_cond_est} follows from~\cref{eq:l2_sing}.
\end{proof}
Because the bound of $\kappa(\bs{U})$ given by~\ref{eq:l2_cond_est} is indepenent of $N$, we obtain the following $L^2$-stability result of the central HV method.
\begin{theorem}\label{thm:l2_stab}
  There exists a constant $C$ that is determined by the operator $[\mathcal{D}_x]$ in the central HV scheme (particularly, it does not depend on $h$), such that for all $t\ge0$:
  \begin{equation}\label{eq:l2_stab}
    \sum_{j=0}^{N-1}\left[\overline{u}_{\phf{j}}(t)\right]^2 + \sum_{j=0}^{N-1}\left[u(t)\right]^2 \le C
    \left(\sum_{j=0}^{N-1}\left[\overline{u}_{\phf{j}}(0)\right]^2 + \sum_{j=0}^{N-1}\left[u(0)\right]^2\right)\;.
  \end{equation}
\end{theorem}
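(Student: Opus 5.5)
The theorem follows by assembling the ingredients already developed in this section. We work with the solution vector $\bs{u}(t)$ of \cref{eq:l2_solvec}, which solves the linear system \cref{eq:l2_semi}. The left-hand side of \cref{eq:l2_stab} is exactly $\nrm{\bs{u}(t)}^2$ and the right-hand side is $C\nrm{\bs{u}(0)}^2$. So it suffices to show $\nrm{\bs{u}(t)}\le\kappa(\bs{U})\nrm{\bs{u}(0)}$ with $\kappa(\bs{U})$ bounded independently of $N$, and then take $C=\kappa(\bs{U})^2$.

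The first step is to justify the diagonalization \cref{eq:l2_m_diag}, namely $\bs{M}=\bs{U}\bs{D}\bs{U}^{-1}$ with $\bs{D}$ purely imaginary. For $\theta_j\ne 0$, the two eigenvalues in \cref{eq:l2_eigval} are distinct because the radicand is positive ($t_k>0$ and $\cos\theta_j\ne1$). For $j=N$ ($\theta=0$) the matrix $\bs{Q}_N$ is zero, since $G(1)$ pairs with $e^{i0}-1=0$ and $H(1)=0$ in the central case. Hence $\bs{Q}_N$ is trivially diagonalizable and any unitary $\bs{W}_N$ may be chosen. I would note this separately, because \cref{eq:l2_sing} divides by $\sin\phi$ and so does not apply at $\phi=0$; there one simply takes $\bs{W}_N=\bs{I}_2$, with singular values equal to $1$.

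With $\bs{w}=\bs{U}^{-1}\bs{u}$, the energy computation already done gives $\frac{d}{dt}\nrm{\bs{w}}^2=0$, using $\bs{D}+\overline{\bs{D}}=\bs{0}$. Therefore
\[
\nrm{\bs{u}(t)}\le\nrm{\bs{U}}\,\nrm{\bs{U}^{-1}}\,\nrm{\bs{u}(0)}.
\]

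The remaining step is to bound $\kappa(\bs{U})$ by \cref{lm:l2_cond}. This gives
\[
C=\frac{\max\bigl(1,\sum_{k=1}^r k^2t_k\bigr)}{\min(1,t_1)},
\]
which depends only on the coefficients $t_k$, hence only on $[\mathcal{D}_x]$, and not on $h$ or $N$. The bound is also unaffected by the special choice $\bs{W}_N=\bs{I}_2$, since $1$ already lies within the range allowed in \cref{eq:l2_cond_est}.

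The only delicate point is confirming $t_1>0$, and more generally $t_k>0$, in both subcases $r'=r$ and $r'=r-1$ of \cref{eq:l2_t}. The case $r=0$ is vacuous. For $r'=r$ and $1\le k\le r$, $\zeta^{r,r}_k=H_{r+k}-H_{r-k}>0$, so $1+2k\zeta^{r,r}_k>0$, and $C^{r,r}_k>0$. For $r'=r-1$, the same sign argument applies to $k\le r-1$, and $t_r>0$ is immediate. Once positivity is established, the theorem follows.
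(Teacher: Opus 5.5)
Your proof follows the paper's route. You diagonalize $\bs{M}=\bs{U}\bs{D}\bs{U}^{-1}$ mode by mode, use $\bs{D}+\overline{\bs{D}}=\bs{0}$ to conserve $\nrm{\bs{U}^{-1}\bs{u}}$, and bound $\kappa(\bs{U})$ uniformly in $N$ via \cref{lm:l2_cond}. Your extra attention to the mode $j=N$ is warranted. The paper asserts that the eigenvalues of every $\bs{Q}_j$ are distinct and applies \cref{eq:l2_sing} for all $j$, but at $\theta_N=2\pi$ both eigenvalues vanish and $\tilde{g}$ is undefined. Your positivity check of the $t_k$ in both subcases $r'=r$ and $r'=r-1$ is also correct.

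The one step you add is misjustified, however. At that mode the matrix is $\bs{Q}_N=\begin{bmatrix}0&0\\ G(1)&H(1)\end{bmatrix}$. The entry $G(1)$ is not multiplied by $e^{i0}-1$; that product appears only in $\det\bs{Q}=-(e^{i\theta}-1)G$. Your reasoning therefore shows only that the characteristic polynomial at this mode is $\lambda^2$, and a double zero eigenvalue does not imply diagonalizability. If $G(1)\neq0$, $\bs{Q}_N$ would be a nonzero nilpotent block, and the constant mode ($\overline{u}_{\phf{j}}\equiv a$, $u_j\equiv b$) would grow linearly in $t$, contradicting \cref{eq:l2_stab}.

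So you must show $G(1)=0$ separately, and it is true. The DDO is exact on constants, so $G(1)+H(1)=h[\mathcal{D}_x 1]_j=0$, and $H(1)=\sum_k\beta_k=0$ by \cref{eq:l2_beta}. Equivalently, $G(1)=F'(1)=\sum_{k\neq0}kt_k=0$ because $t_{-k}=t_k$ for central stencils. With this line added, the choice $\bs{W}_N=\bs{I}_2$ is legitimate and your argument is complete.
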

It should be noted that this energy estimate extends naturally to fully discretized method that combines the central HV scheme in space and midpoint rule in time, as stated below.
\begin{theorem}\label{thm:l2_stab_mp}
  Denoting the numerical solution to~\cref{eq:l2_semi} at $t^n$ by $\bs{u}^n$ for any integer $n$, and using midpoint rule to update the solution from $t^n$ to $t^{n+1}=t^n+\Delta t^n$:
  \begin{equation}\label{eq:l2_mp}
    \frac{\bs{u}^{n+1}-\bs{u}^n}{\Delta t^n} + \frac{1}{h}\bs{M}\frac{\bs{u}^{n+1}+\bs{u}^n}{2} = \bs{0}\;,
  \end{equation}
  then for all $n>0$, there is the estimate:
  \begin{equation}\label{eq:l2_stab_mp}
    \bs{u}^n\cdot\bs{u}^n \le C \bs{u}^0\cdot\bs{u}^0\;,
  \end{equation}
  where $\bs{u}^0$ is the initial data and $C$ is the same constant in~\cref{thm:l2_stab}.
\end{theorem}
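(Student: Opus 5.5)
The plan is to repeat the energy argument of \cref{thm:l2_stab}, now applied to the midpoint (Crank--Nicolson) recursion. The starting point is the diagonalization \cref{eq:l2_m_diag}, $\bs{M}=\bs{U}\bs{D}\bs{U}^{-1}$, with $\bs{U}$ built as in \cref{lm:l2_cond}, so that $\kappa(\bs{U})^2\le C$, where $C$ is the constant of \cref{thm:l2_stab} (we take $C$ to be the squared condition-number bound from \cref{eq:l2_cond_est}). Set $\bs{w}^n=\bs{U}^{-1}\bs{u}^n$. Multiplying \cref{eq:l2_mp} by $\bs{U}^{-1}$ decouples it into scalar recursions,
\[
\Bigl(1+\tfrac{\Delta t^n}{2h}\lambda\Bigr)w^{n+1}=\Bigl(1-\tfrac{\Delta t^n}{2h}\lambda\Bigr)w^n ,
\]
one for each diagonal entry $\lambda$ of $\bs{D}$.

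By \cref{eq:l2_eigval}, every $\lambda$ is purely imaginary, since $\bs{D}+\overline{\bs{D}}=\bs{0}$. Write $\mu=\Delta t^n\lambda/(2h)\in i\mathbb{R}$. Then $1+\mu\neq0$, so the implicit scheme is uniquely solvable for every step size, and the amplification factor satisfies $\abs{(1-\mu)/(1+\mu)}=1$ because $1-\mu=\overline{1+\mu}$. Hence $\nrm{\bs{w}^{n+1}}=\nrm{\bs{w}^n}$ for every $n$, whatever the (possibly variable) step sizes $\Delta t^n$ are. The same conclusion can be reached without the scalar decomposition: take the inner product of the transformed equation with $\bs{w}^{n+1}+\bs{w}^n$ and use the skew-Hermitian property of $\bs{D}$.

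It remains to return to the original variables:
\[
\nrm{\bs{u}^n}\le\nrm{\bs{U}}\,\nrm{\bs{w}^n}=\nrm{\bs{U}}\,\nrm{\bs{w}^0}\le\kappa(\bs{U})\,\nrm{\bs{u}^0}.
\]
Squaring gives \cref{eq:l2_stab_mp} with the constant $\kappa(\bs{U})^2$. This is exactly the constant obtained in the semi-discrete case, so it is independent of $h$, $N$, $n$, and the step sizes.

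The argument has no real obstacle. The points that need care are confirming that $1+\mu$ is invertible (it is, since $\mu$ is imaginary) and checking that the diagonalization is legitimate at $\theta_N=2\pi$. There $\lambda_1=\lambda_2=0$, so $\bs{Q}_N$ is not obviously diagonalizable from distinct eigenvalues. The construction $\bs{W}=\bs{A}\tilde{\bs{W}}$ in \cref{lm:l2_cond} still applies once $\tilde g$ is extended continuously, giving $\tilde g(0)=\sum_k k^2t_k>0$. If this extension fails, I would treat the zero mode directly: at $\theta=0$ we have $\bs{Q}=\bs{0}$, so the mode is constant in time and its norm is trivially preserved.
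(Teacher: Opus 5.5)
Your proposal is correct and follows the paper's proof. Like the paper, you transform by $\bs{U}^{-1}$, use $\bs{D}+\overline{\bs{D}}=\bs{0}$ to conserve $\nrm{\bs{w}^n}$, and return to $\bs{u}^n$ through $\kappa(\bs{U})^2$; the paper does the middle step via the inner product with $\overline{\bs{w}}^{n+1}+\overline{\bs{w}}^n$, which you list as an alternative to the modulus-one amplification factor. You also check that the implicit step is solvable and handle the zero mode at $\theta_N=2\pi$: there $\bs{Q}_N=\bs{0}$ because $G(1)=H(1)=0$ for central stencils, so it is diagonalizable despite the repeated eigenvalue, which patches points the paper's ``distinct eigenvalues'' remark passes over.
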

\begin{proof}
  Premultiplying~\cref{eq:l2_mp} by $\bs{U}^{-1}$, one obtains:
  \begin{equation}\label{eq:l2_mp_w}
    \frac{\bs{w}^{n+1}-\bs{w}^n}{\Delta t^n} + \frac{1}{h}\bs{D}\frac{\bs{w}^{n+1}+\bs{w}^n}{2} = \bs{0}\;;
  \end{equation}
  then taking its inner product with $\overline{\bs{w}}^{n+1}+\overline{\bs{w}}^n$, there is:
  \begin{align*}
    \overline{\bs{w}}^{n+1}\cdot\bs{w}^{n+1}-\overline{\bs{w}}^n\cdot\bs{w}^n &= - (\overline{\bs{w}}^{n+1}+\overline{\bs{w}}^n)\cdot\frac{\Delta t^n}{2h}\bs{D}(\bs{w}^{n+1}+\bs{w}^n) \\
    &= - \frac{\Delta t^n}{4h}(\overline{\bs{w}}^{n+1}+\overline{\bs{w}}^n)\cdot(\bs{D}+\overline{\bs{D}})(\bs{w}^{n+1}+\bs{w}^n) = 0\;.
  \end{align*}
  Therefore for all $n\ge0$, $\overline{\bs{w}}^n\cdot\bs{w}^n = \overline{\bs{w}}^0\cdot\bs{w}^0$, and~\cref{eq:l2_stab_mp} follows naturally.
\end{proof}

Lastly, we point out that like the finite difference methods, central HV schemes should not be paired with explicit time integrators, especially when the time step size is computed by the usual Courant condition, see for example the theorem below.
\begin{theorem}\label{thm:l2_instab_fe}
  Using the same notation as in~\cref{thm:l2_stab_mp}, let the solution update from $t^n$ to $t^{n+1}$ be given by the forward Euler method:
  \begin{equation}\label{eq:l2_fe}
    \frac{\bs{u}^{n+1}-\bs{u}^n}{\Delta t} + \frac{1}{h}\bs{M}\bs{u}^n = \bs{0}\;,
  \end{equation}
  where $\Delta t$ is the uniform time step size computed by $\Delta t = \alpha_{\cfl}h^{\gamma}$ for some positive constants $\alpha_{\cfl}$ and $\gamma$.
  Let the solution be solved to $t=T$ after $N_T=T/\Delta t$ time steps, then: (1) if $\gamma<2$, there is $\lim_{h\to0}\frac{\nrm{\bs{u}^{N_T}}}{\nrm{\bs{u}^0}}\to\infty$ for almost all $\bs{u}^0$, (2) if $\gamma=2$, there exists a constant $C_T$ growing exponentially in $T$ such that $\nrm{\bs{u}^{N_T}}\le C_T\bs{u}^0$ for all $h>0$, and (3) if $\gamma>2$, $\lim_{h\to0}\frac{\nrm{\bs{u}^{N_T}}}{\nrm{\bs{u}^0}} = 1$.
\end{theorem}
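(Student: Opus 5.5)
We would work in the diagonalizing coordinates $\bs{w}=\bs{U}^{-1}\bs{u}$ constructed before \cref{lm:l2_cond}. Premultiplying \cref{eq:l2_fe} by $\bs{U}^{-1}$ gives the decoupled iteration $\bs{w}^{n+1}=(\bs{I}-\frac{\Delta t}{h}\bs{D})\bs{w}^n$. Since every diagonal entry of $\bs{D}$ is purely imaginary, the amplification factor of each mode has modulus $\abs{1-\frac{\Delta t}{h}\lambda}=\sqrt{1+\frac{\Delta t^2}{h^2}\abs{\lambda}^2}\ge1$. Hence
\begin{displaymath}
  \nrm{\bs{w}^{N_T}}^2=\sum_{j,\pm}\Bigl(1+\tfrac{\Delta t^2}{h^2}\abs{\lambda_{1,2}(\theta_j)}^2\Bigr)^{N_T}\abs{w^0_{j,\pm}}^2\;.
\end{displaymath}
Passing back to $\bs{u}$ costs at most the factor $\kappa(\bs{U})$. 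By \cref{lm:l2_cond} this factor is bounded independently of $h$, and since it is also at least $1$, norms of $\bs{u}$ and $\bs{w}$ are equivalent up to a uniform constant. So everything reduces to the scalar quantities $\bigl(1+\alpha_\cfl^2h^{2\gamma-2}\abs{\lambda}^2\bigr)^{T/(2\alpha_\cfl h^\gamma)}$.

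The key step is a two-sided bound on the eigenvalue moduli. From \cref{eq:l2_eigval}, since $h_i$ and the $t_k$ are bounded, $\abs{\lambda_{1,2}(\theta)}\le\Lambda\eqdef\max_\theta\abs{h_i}+\sqrt{\max_\theta h_i^2+16\sum_k t_k}$, a constant independent of $h$. For a lower bound, pick a wavenumber near $\theta=\pi$ (take $j\approx N/2$). There $\sum_k t_k(1-\cos k\theta)\ge t_1(1-\cos\theta)$ is bounded below by a positive constant, because all $t_k>0$. Since $\abs{\lambda_1}\abs{\lambda_2}=\abs{F}$ and $\abs{\lambda_{1,2}}\le\Lambda$, both roots satisfy $\abs{\lambda_{1,2}}\ge\abs{F}/\Lambda\ge c>0$. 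Indeed, on a whole fixed interval of $\theta$ away from $0$, which contains a positive fraction of the modes $\theta_j$, both $\abs{\lambda_{1,2}}\ge c$.

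Now the three regimes follow from $\log(1+x)\approx x$. The exponent behaves like $N_T\cdot\frac12\alpha_\cfl^2h^{2\gamma-2}\abs{\lambda}^2\sim \frac{T\alpha_\cfl}{2}h^{\gamma-2}\abs{\lambda}^2$.

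If $\gamma>2$, the maximal growth factor is at most $\exp(\frac{T\alpha_\cfl}{2}\Lambda^2h^{\gamma-2})\to1$. Combined with the lower bound $1$ per mode, $\nrm{\bs{w}^{N_T}}/\nrm{\bs{w}^0}\to1$. For the statement about $\bs{u}$, we would use that the propagator $(\bs{I}-\frac{\Delta t}{h}\bs{M})^{N_T}$ tends to the identity in operator norm. Concretely, $\bs{U}(\bs{E}-\bs{I})\bs{U}^{-1}$ with diagonal $\bs{E}-\bs{I}$ of size $o(1)$ and bounded $\kappa(\bs{U})$ gives norm $o(1)$. The same reasoning covers $\gamma=2$, with the bound $C_T=\kappa(\bs{U})\exp(\frac{T\alpha_\cfl}{2}\Lambda^2)$, which grows exponentially in $T$.

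If $\gamma<2$, modes in the fixed interval grow at least like $\exp(c'h^{\gamma-2})\to\infty$. The case $\gamma\ge1$ is handled directly by this asymptotic. If $\gamma<1$, the base $1+\alpha_\cfl^2h^{2\gamma-2}c^2$ itself diverges while $N_T\ge1$, so growth is even faster. The phrase ``almost all $\bs{u}^0$'' will be interpreted as ``$\bs{u}^0$ has a nonzero component on these modes'', measured relative to $\nrm{\bs{u}^0}$.

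The main obstacle is making this last point precise, since the grid and hence $\bs{u}^0$ change with $h$. I would interpret $\bs{u}^0$ as samples (or cell averages) of a fixed generic function. I would then argue that its discrete Fourier coefficients at wavenumbers in the fixed $\theta$-interval are not all negligible relative to the total norm. For example, I would show that $\nrm{\bs{w}^0}^{-2}\sum_{\theta_j\in I}\abs{w^0_j}^2$ is bounded below by $e^{-o(h^{\gamma-2})}$, which holds for any fixed nonsmooth or generic profile. The exponential growth then dominates such polynomially small weights.
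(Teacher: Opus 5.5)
Your route is the one the paper has in mind. Its proof is omitted, with the remark that it consists of estimating the eigenvalues $(1-\alpha_{\cfl}h^{\gamma-1}\lambda)^{T/(\alpha_{\cfl}h^\gamma)}$ of the forward-Euler propagator. Your treatment of (2), and of the growth in (1), is sound: the $h$-uniform bound on $\kappa(\bs{U})$ from \cref{lm:l2_cond}, the bound $\abs{\lambda}\le\Lambda$, the bound $\abs{\lambda_{1,2}}\ge\abs{F}/\Lambda$ on a fixed $\theta$-interval, and $\log(1+x)\le x$ give what is claimed there.

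\textbf{The gap is in (3).} Write $\lambda=i\mu$ and $a=\alpha_{\cfl}h^{\gamma-1}$. The diagonal entry of $\bs{E}=(\bs{I}-\frac{\Delta t}{h}\bs{D})^{N_T}$ is $(1+a^2\mu^2)^{N_T/2}e^{-iN_T\arctan(a\mu)}$. Its modulus tends to $1$, but its phase is $\approx -N_Ta\mu=-T\mu/h$, which does not tend to zero. That phase is the advection itself, and for $\theta$ bounded away from $0$ it even diverges. So $\bs{E}-\bs{I}$ is not $o(1)$, and the propagator does not converge to the identity. What does hold is $\max_k\abs{E_{kk}-e^{-T\lambda_k/h}}\to0$. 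The modulus error is $O(T\alpha_{\cfl}\Lambda^2h^{\gamma-2})$ and the phase error is $N_T(a\mu-\arctan a\mu)=O(T\alpha_{\cfl}^2\Lambda^3h^{2\gamma-3})$. Hence $\bs{u}^{N_T}$ approaches the semi-discrete solution $e^{-T\bs{M}/h}\bs{u}^0$, not $\bs{u}^0$. But the semi-discrete flow conserves $\nrm{\bs{w}}$, not $\nrm{\bs{u}}$, because $\bs{U}$ is not unitary. You therefore only get $\kappa(\bs{U})^{-1}\lesssim\nrm{\bs{u}^{N_T}}/\nrm{\bs{u}^0}\lesssim\kappa(\bs{U})$.

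In fact (3) cannot hold uniformly in $\bs{u}^0$. For $(L,R)=(2,2)$ one has $t_1=2$, $\alpha_{-1}=-2$, $\alpha_0=2$ and $\beta_{\pm1}=\mp\frac12$, so $\bs{Q}(\pi)=\begin{bmatrix}0&-2\\4&0\end{bmatrix}$. Take $N$ even and the data $\overline{u}_{\phf{j}}=(-1)^j$, $u_j=0$. Then $\nrm{\bs{u}(t)}^2/\nrm{\bs{u}^0}^2=1+\sin^2(2\sqrt{2}\,t/h)$, which has no limit as $h\to0$, and forward Euler with $\gamma>2$ inherits this. So you must do one of two things:
\begin{itemize}
\item read (3) as a statement about the eigenvalue moduli, equivalently about $\nrm{\bs{w}^{N_T}}/\nrm{\bs{w}^0}$, which you did prove and which is all the paper's eigenvalue sketch delivers; or
\item restrict to a fixed smooth profile and supply the missing ingredient: convergence of the semi-discrete central scheme to the exact solution, whose $L^2$ norm is conserved.
\end{itemize}

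A related caveat applies to your reading of ``almost all'' in (1). Your sufficient condition is $\nrm{\bs{w}^0}^{-2}\sum_{\theta_j\in I}\abs{w^0_j}^2\ge e^{-o(h^{\gamma-2})}$. It fails for a fixed analytic profile when $1<\gamma<2$: such data has content $e^{-c/h}$ on $I$, and $h^{-1}\gg h^{\gamma-2}$. In exact arithmetic such data does not blow up at all. So this condition must be imposed on nonsmooth profiles or on generic grid vectors, rather than asserted for ``generic'' smooth data.
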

The proof builds on estimating the eigenvalues of $(\bs{I}-\alpha_{\cfl}h^{\gamma-1}\bs{M})^{\frac{T}{\alpha_{\cfl}h^\gamma}}$ as $h\to0$ and is elementary; hence we omit it here.

\section{Numerical verifications}
\label{sec:num}
Finally, we verify the stability/instability results for HV methods with an upwind biased stencil, as well as the $L^2$ stability of central schemes.
First, we plot the set $\mathcal{S}$ given by~\cref{eq:prelim_semi_eigset} for selected HV methods.
According to the discussion in~\cref{sec:prelim}, the semi-discretized HV method is stable if and only if the entire trajectory lies in the right complex plane.
In~\cref{fg:num_stab_lr1}, we plot selected HV methods with $L-R=1$; and~\cref{fg:num_stab_lr2} plots the eigenvalue trajectories for HV methods with $L-R=2$.
Note that as $L$ and $R$ get larger, the trajectory stay closer to the imaginary axis, which is expected as they construct more accurate HV schemes; the enlarged views nearby the imaginary axis also show that $\mathcal{S}$ is in the right complex plane (these plots are not shown), c.f.~\cref{fg:num_instab}.
\begin{figure}\centering
  \begin{subfigure}[b]{.48\textwidth}\centering
    \includegraphics[trim=1in 0 1in 0.2in, clip, width=\textwidth]{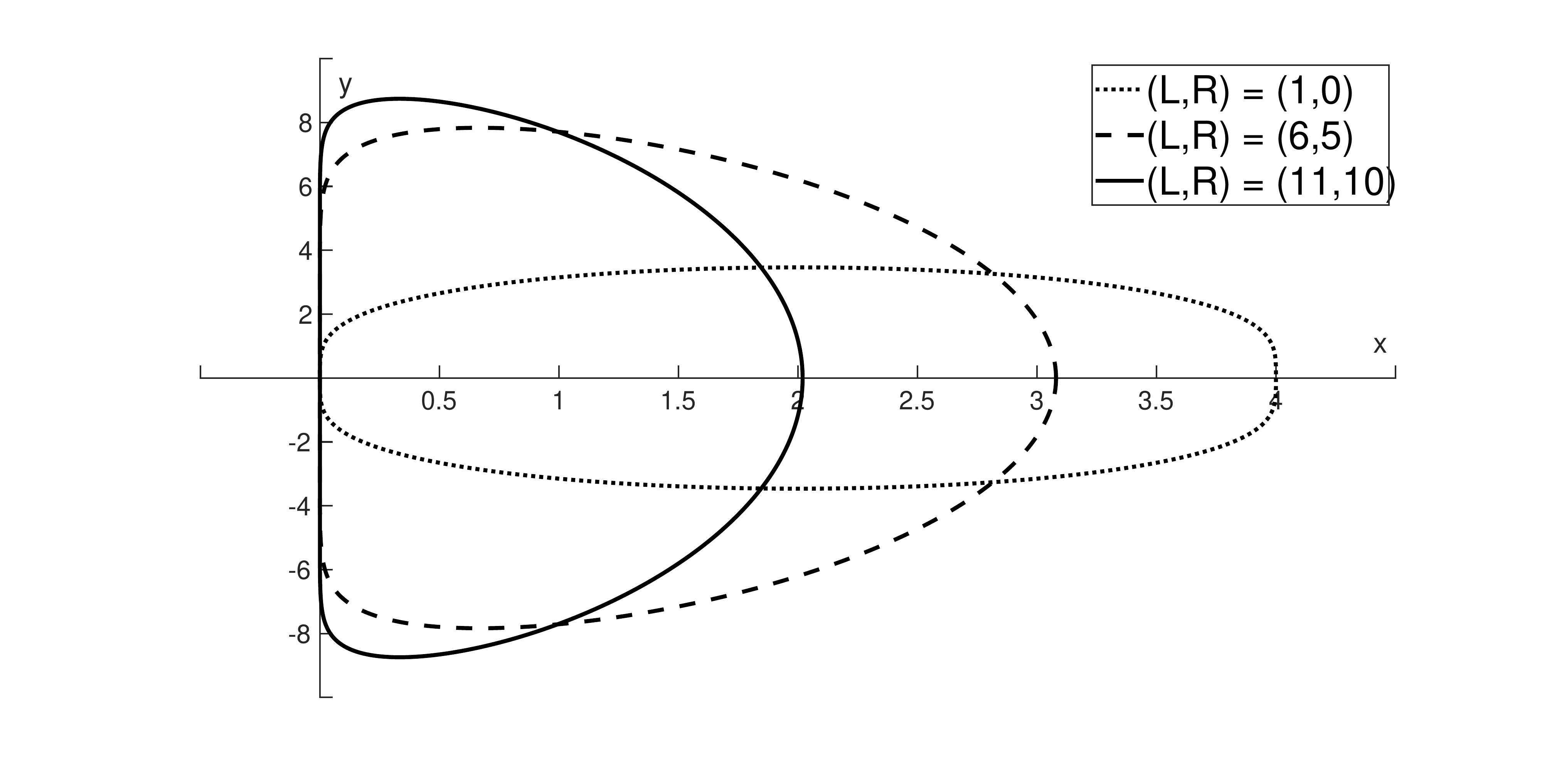}
    \caption{$\mathcal{S}$ in the case $L-R=1$.}
    \label{fg:num_stab_lr1}
  \end{subfigure}
  \begin{subfigure}[b]{.48\textwidth}\centering
    \includegraphics[trim=1in 0 1in 0.2in, clip, width=\textwidth]{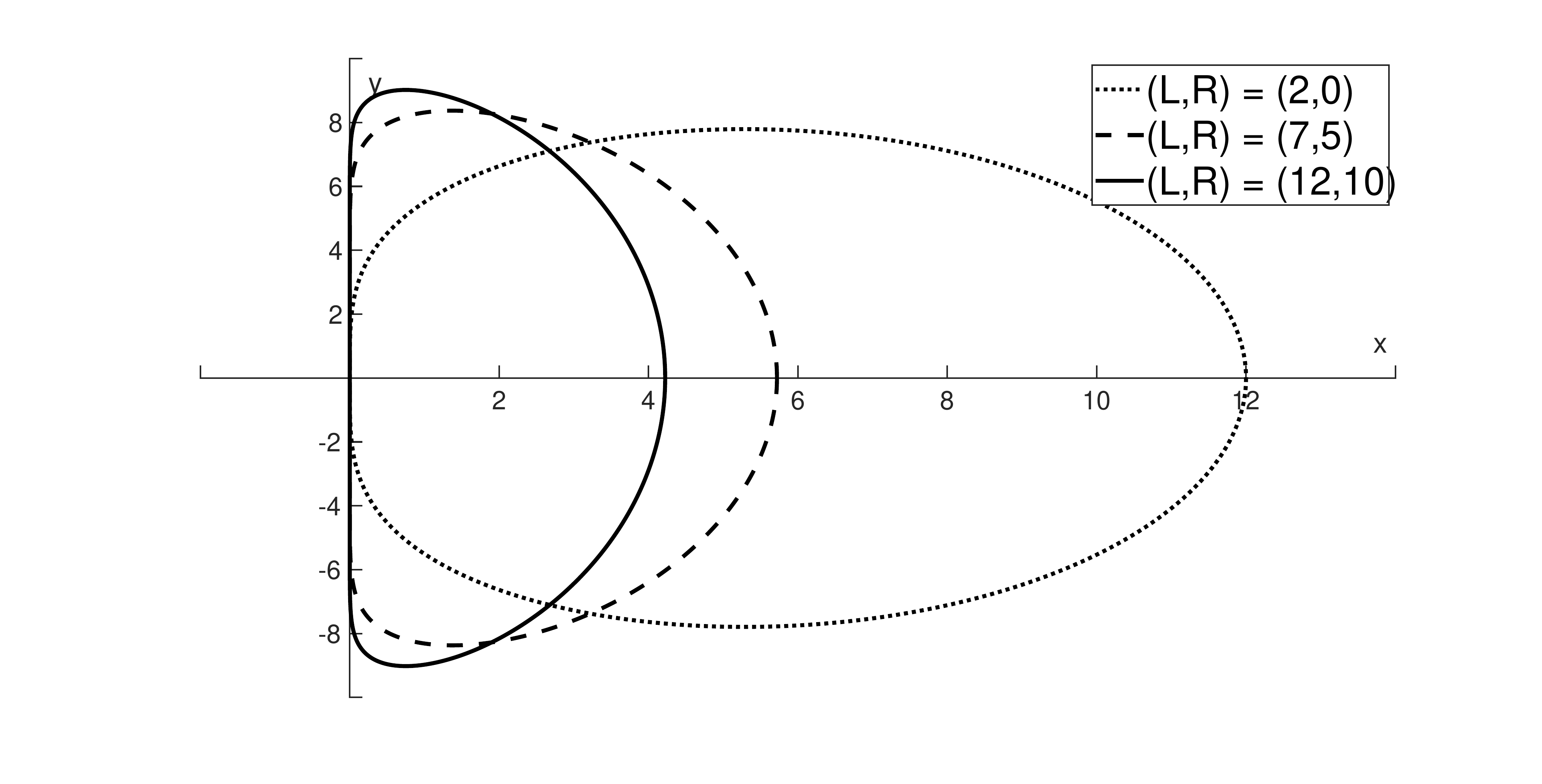}
    \caption{$\mathcal{S}$ in the case $L-R=2$.}
    \label{fg:num_stab_lr2}
  \end{subfigure}
  \vglue -.15in
  \caption{Plotting $\mathcal{S}$ for selected HV methods with stencil $(L,R)$ such that (1) $L-R=1$ (left panel) or (2) $L-R=2$ (right panel). 
    All trajectories are in the right complex plane.}
  \label{fg:num_stab}
\end{figure}

Next, the eigenvalue trajectory plot for selected HV methods with $L-R\ge3$ are plotted -- note that according to~\cref{thm:prelim_sb} all such methods are unstable except when $R=0$ and $L=3$.
The instability for selected schemes with $L-R>3$ is clearly observed in~\cref{fg:num_instab_lr_more}, where the trajectory $\mathcal{S}$ clearly penatrates into the left complex plane.
When $L-R=3$, the instability is not clearly shown in the global vuew (\cref{fg:num_instab_lr3_glob}); but zooming around the imaginary axis shows that the HV scheme when $R\ge1$ is indeed unstable, whereas when $(L,R)=(3,0)$ the trajectory $\mathcal{S}$ always stay in the right complex plane.
\begin{figure}\centering
  \begin{subfigure}[b]{.48\textwidth}\centering
    \includegraphics[trim=1in 0 1in 0.2in, clip, width=\textwidth]{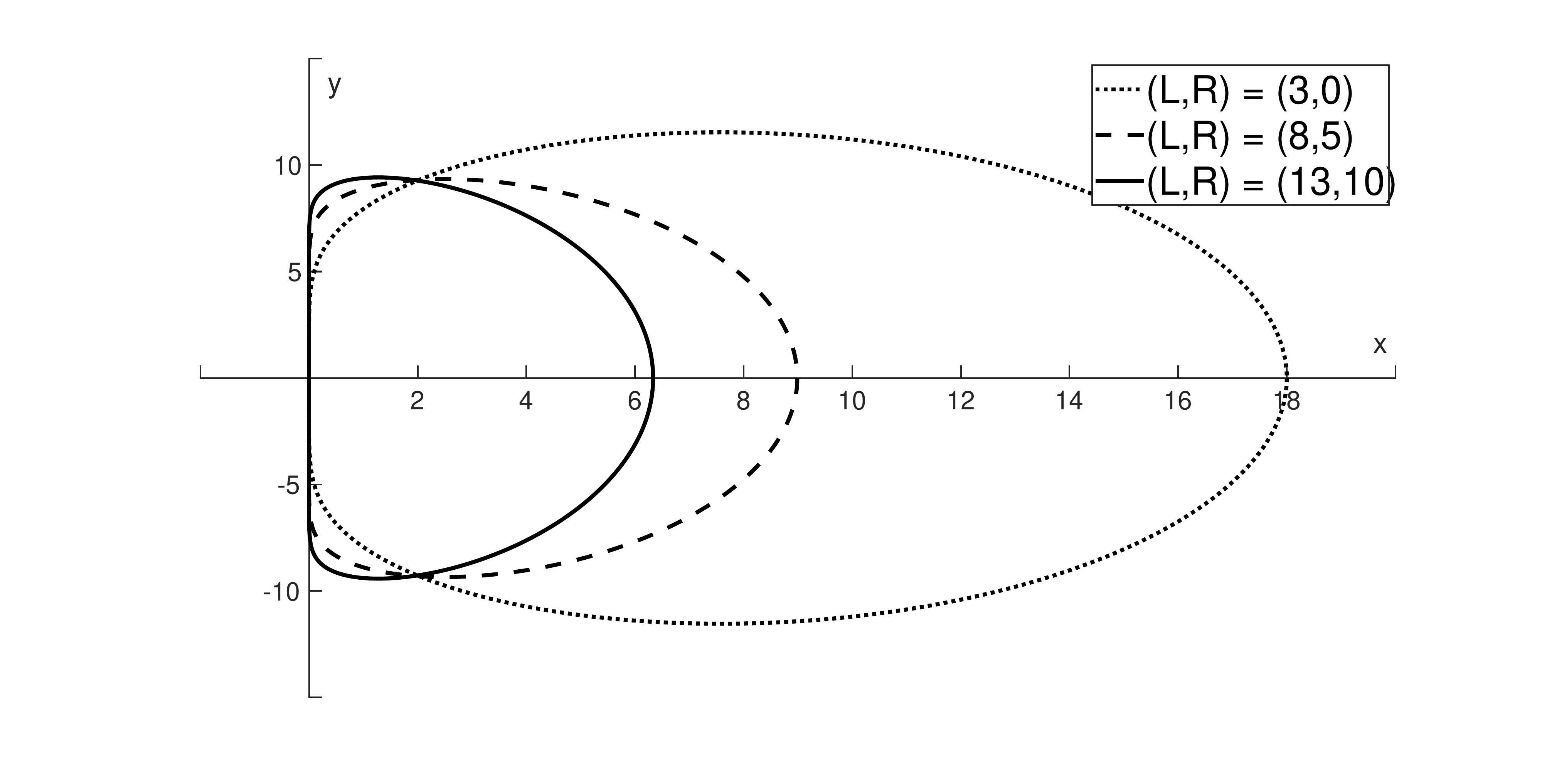}
    \caption{$\mathcal{S}$ in the case $L-R=3$.}
    \label{fg:num_instab_lr3_glob}
  \end{subfigure}
  \begin{subfigure}[b]{.48\textwidth}\centering
    \includegraphics[trim=1in 0 1in 0.2in, clip, width=\textwidth]{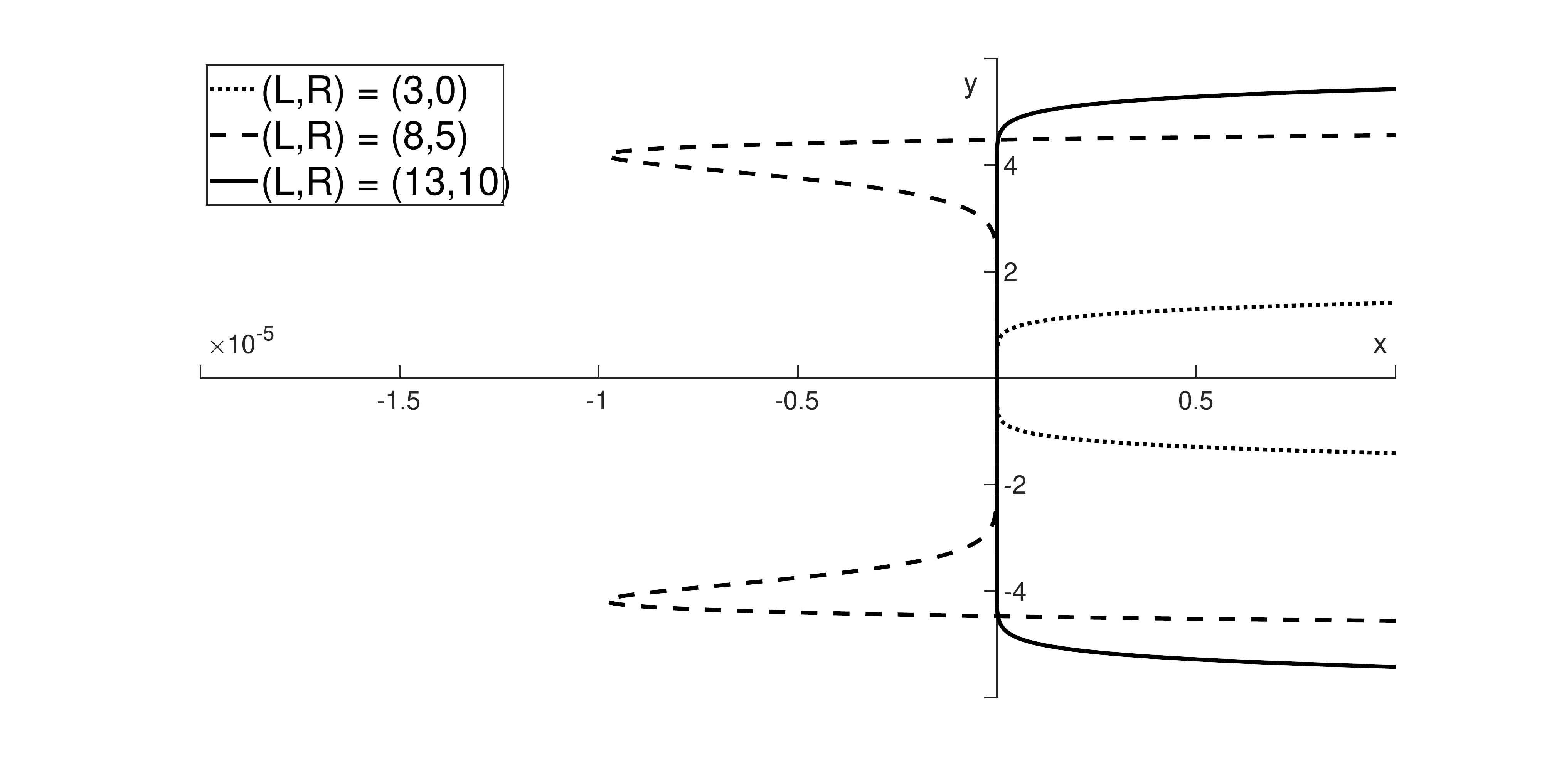}
    \caption{$\mathcal{S}$ in the case $L-R=3$, enlarged view.}
    \label{fg:num_instab_lr3_loc1}
  \end{subfigure} \\
  \begin{subfigure}[b]{.48\textwidth}\centering
    \includegraphics[trim=1in 0 1in 0.2in, clip, width=\textwidth]{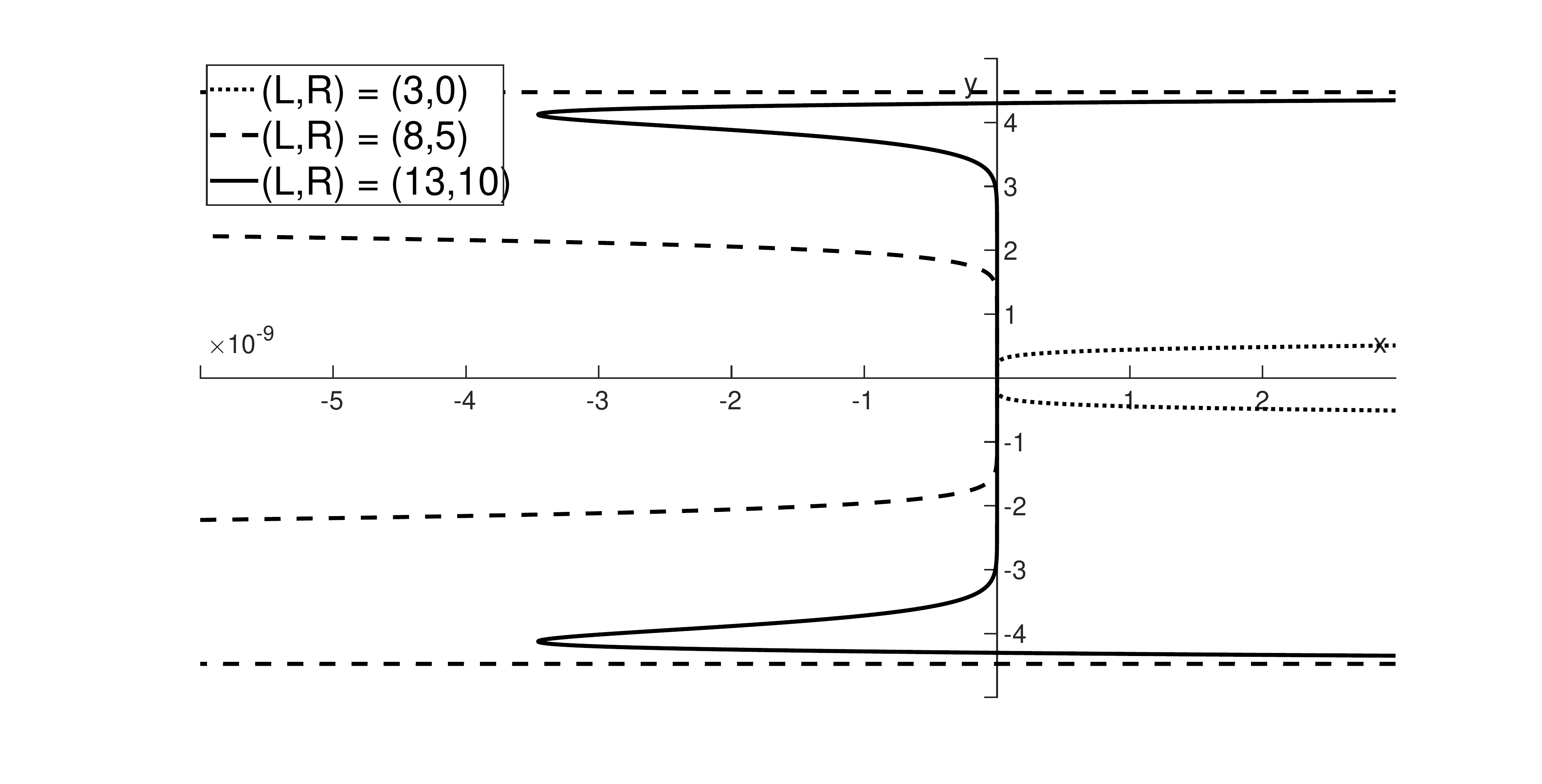}
    \caption{$\mathcal{S}$ in the case $L-R=3$, further enlarged view.}
    \label{fg:num_instab_lr3_loc2}
  \end{subfigure}
  \begin{subfigure}[b]{.48\textwidth}\centering
    \includegraphics[trim=1in 0 1in 0.2in, clip, width=\textwidth]{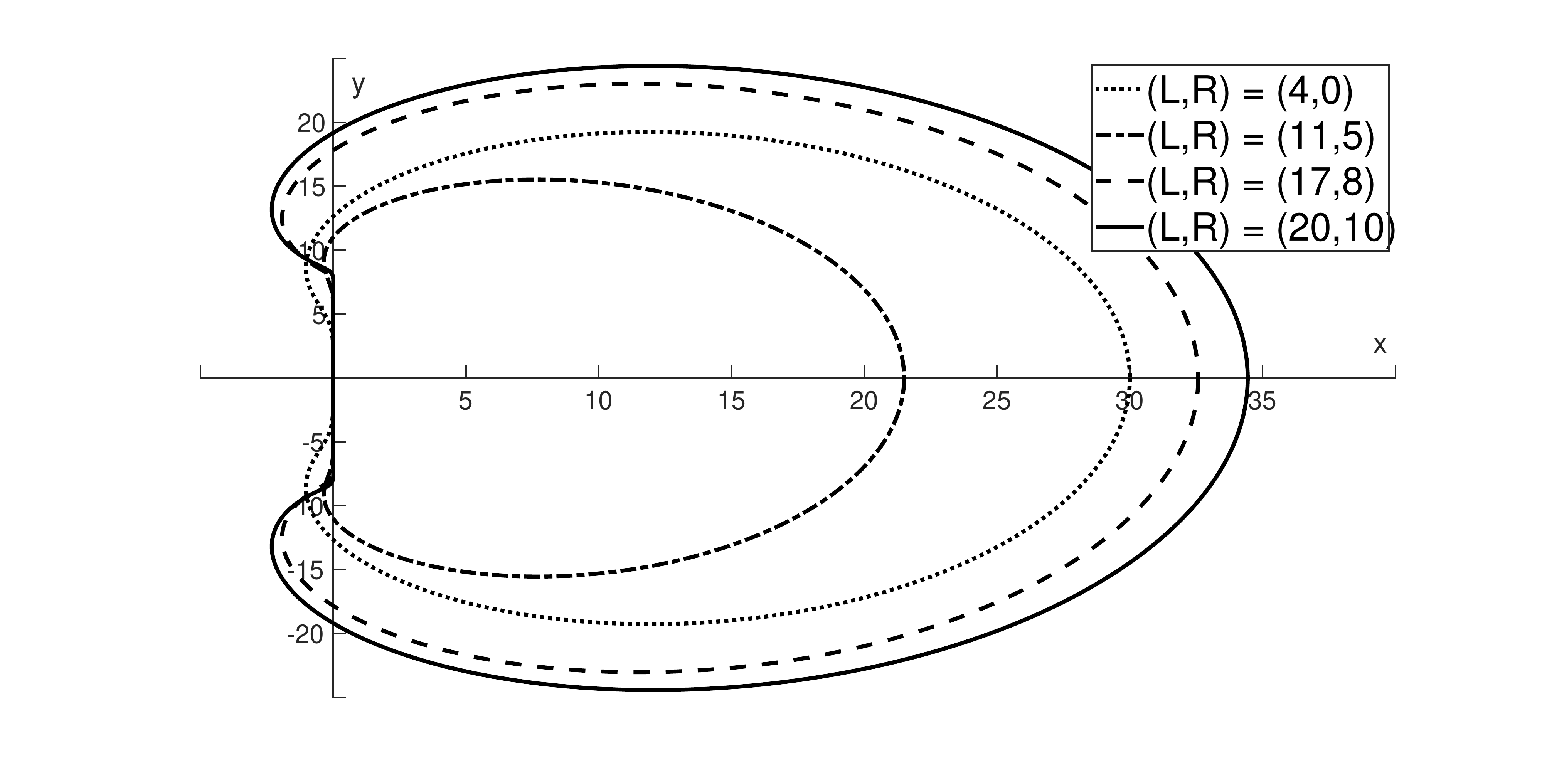}
    \caption{$\mathcal{S}$ in the case $L-R>3$.}
    \label{fg:num_instab_lr_more}
  \end{subfigure}
  \vglue -.15in
  \caption{Plotting $\mathcal{S}$ for selected HV methods with stencil $(L,R)$ such that $L-R\ge3$.
    \cref{fg:num_instab_lr3_glob}--\cref{fg:num_instab_lr3_loc2} plots $\mathcal{S}$ for selected schemes with $L-R=3$ in both a global view and two local views around the imaginary axis; and~\cref{fg:num_instab_lr_more} presents $\mathcal{S}$ of sample schemes with $L-R>3$.}
  \label{fg:num_instab}
\end{figure}

Finally, we verify the stability/instability results in~\cref{thm:l2_stab_mp} and~\cref{thm:l2_instab_fe}.
In both cases, we consider the Cauchy problem with an Gaussian pulse for the initial data:
\begin{equation}\label{eq:num_l2_prob}
  \left\{\begin{array}{lcl}
    u_t + u_x = 0\;, & &  (x,t)\in[0, 1]\times[0, 6]\;, \\
    u(0,t) = u(1,t)\;, & & t\in[0, 10]\;, \\
    u(x,0) = e^{-100\left(x-\frac{1}{2}\right)^2}\;, & & x\in[0, 1]\;.
  \end{array}\right.
\end{equation}
To verify~\cref{thm:l2_stab_mp}, we consider two central HV schemes with stencil given by $(L,R)=(2,2)$ and $(L,R)=(7,7)$, three fixed time step sizes $\Delta t = 0.005$, $0.01$, and $0.02$.
For each combination of $(L,R)$ and $\Delta t$, we use the midpoint method given by~\cref{eq:l2_stab_mp} to compute numerical solutions using two uniform grids with the number of cells given by $N=20$ and $N=160$.
The nodal solutions computed by the HV method with $L=R=2$ are presented in~\cref{fg:num_l2_mp_r2}, and those computed by the HV method with $L=R=7$ are offered in~\cref{fg:num_l2_mp_r7}.
\begin{figure}\centering
  \includegraphics[trim=1in 0 1in .2in, clip, width=.48\textwidth]{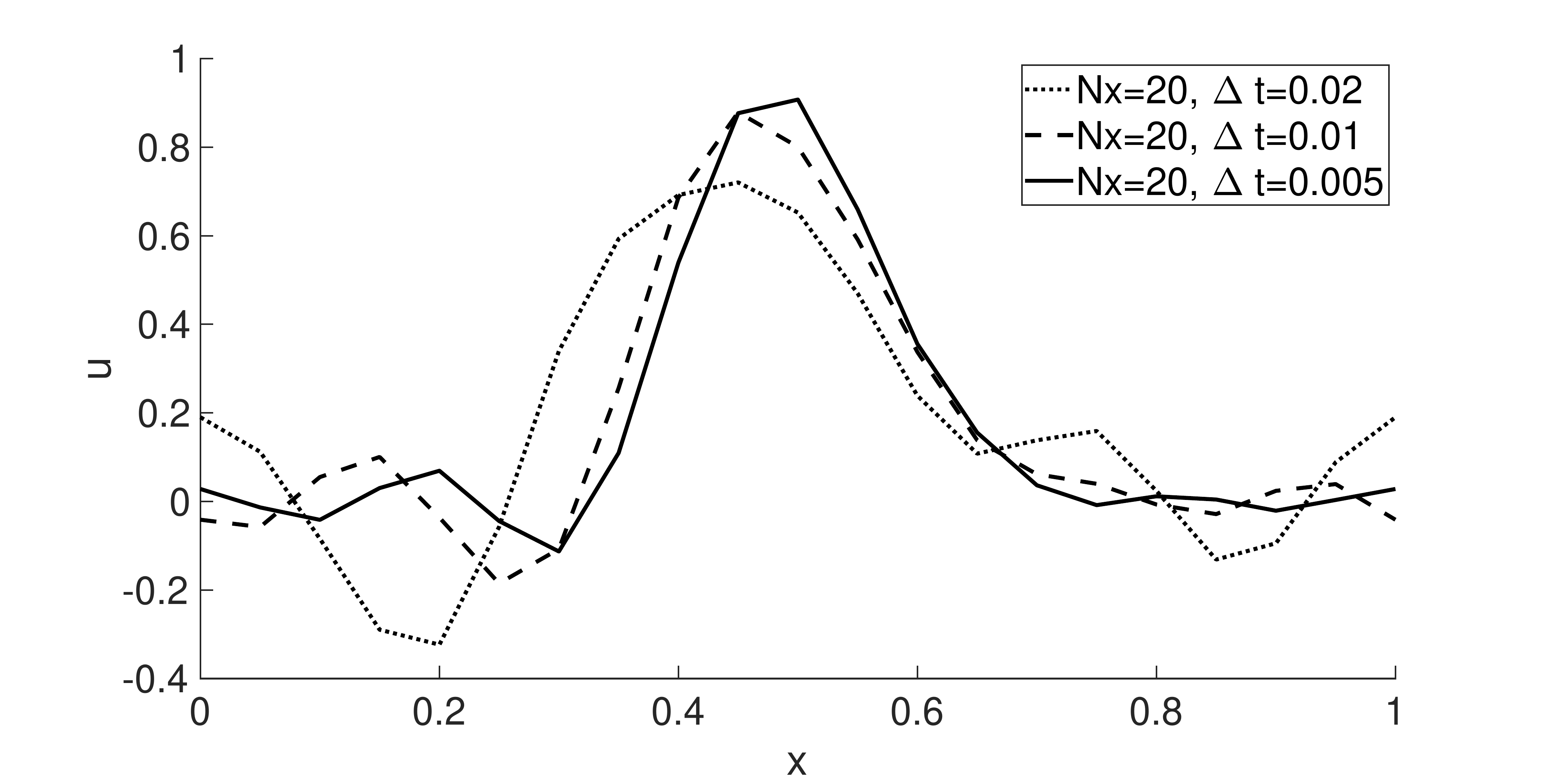} ~~
  \includegraphics[trim=1in 0 1in .2in, clip, width=.48\textwidth]{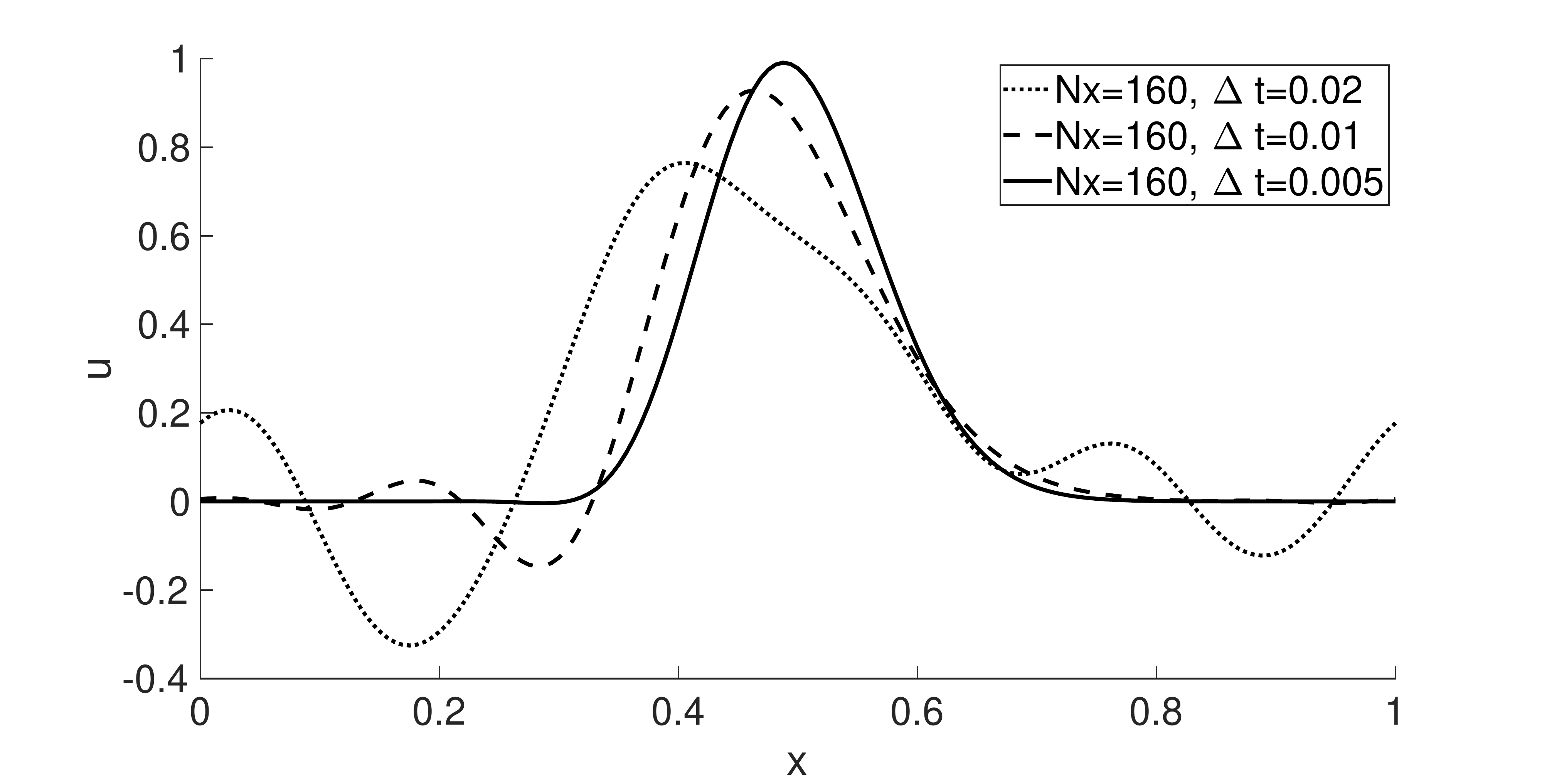}
  \caption{Nodal solutions to~\cref{eq:num_l2_prob} by the central HV scheme with $(L,R)=(2,2)$ in space and midpoint rule~\cref{eq:l2_stab_mp} in time; a uniform grid with $N=20$ cells (left panel) and a uniform grid with $N=160$ cells (right panel) are used.
  Cell-averaged solutions are similar.}
  \label{fg:num_l2_mp_r2}
\end{figure}
\begin{figure}\centering
  \includegraphics[trim=1in 0 1in .2in, clip, width=.48\textwidth]{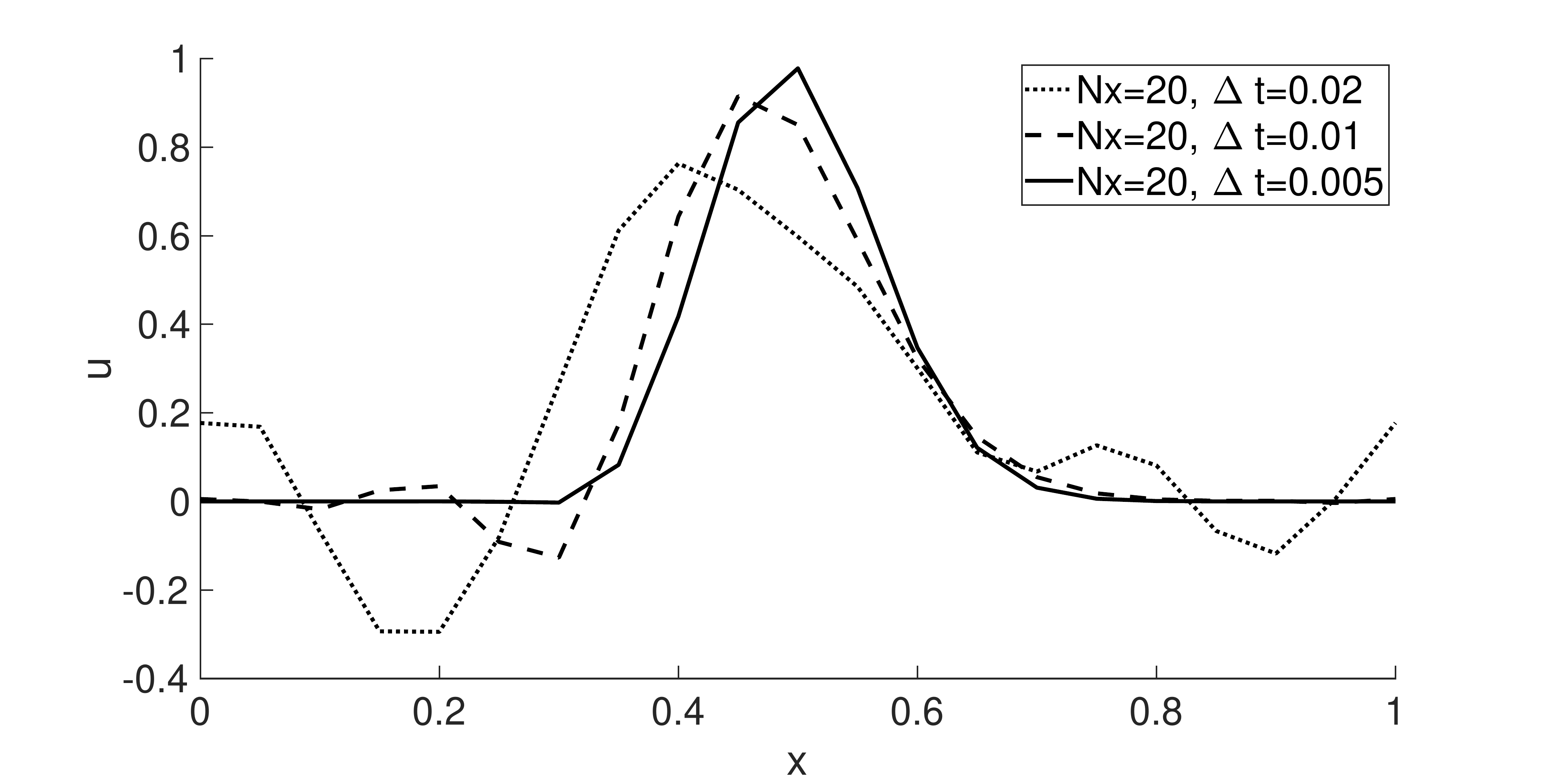} ~~
  \includegraphics[trim=1in 0 1in .2in, clip, width=.48\textwidth]{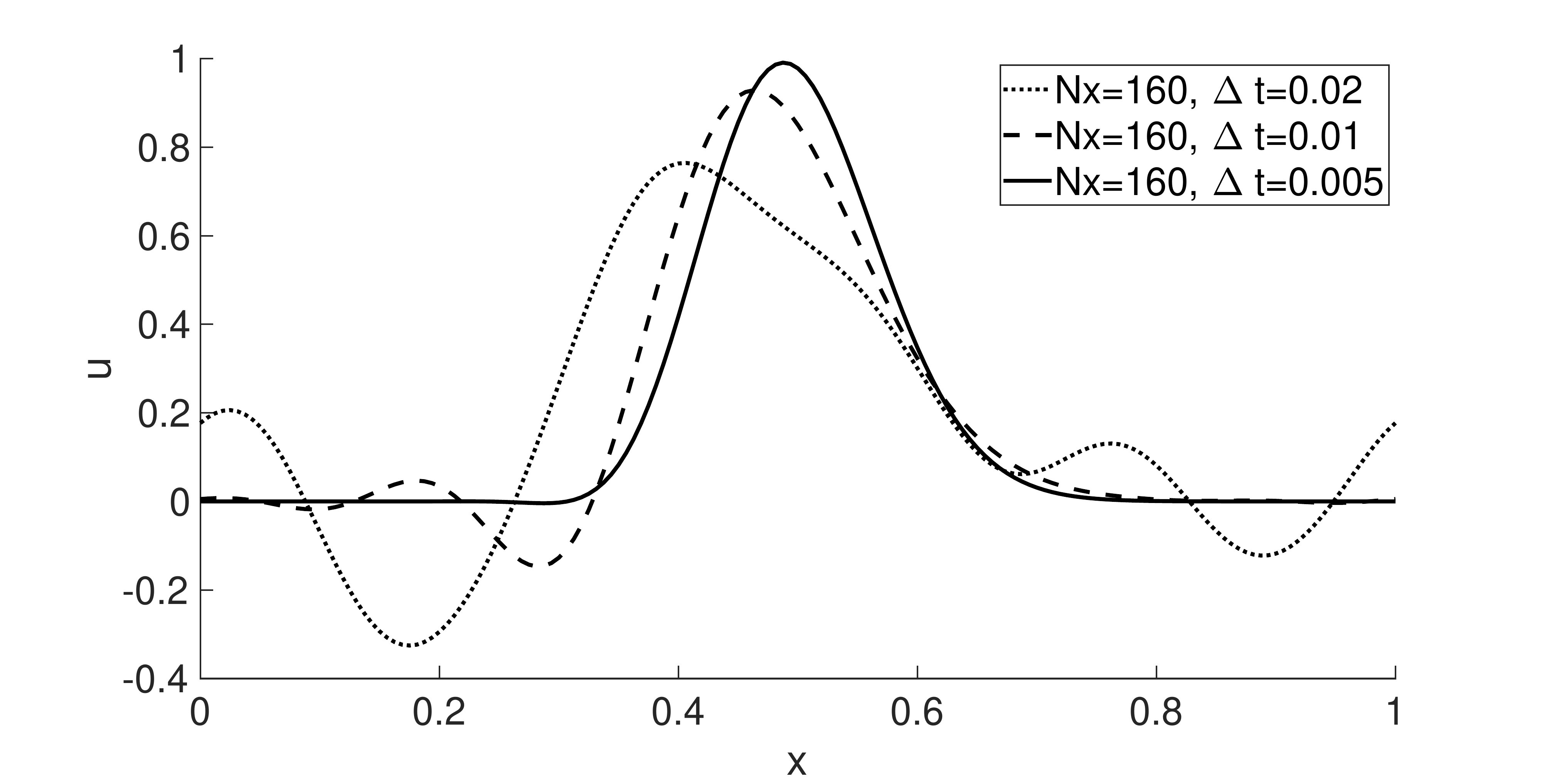}
  \caption{Nodal solutions to~\cref{eq:num_l2_prob} by the central HV scheme with $(L,R)=(7,7)$ in space and midpoint rule~\cref{eq:l2_stab_mp} in time; a uniform grid with $N=20$ cells (left panel) and a uniform grid with $N=160$ cells (right panel) are used.
  Cell-averaged solutions are similar.}
  \label{fg:num_l2_mp_r7}
\end{figure}
From these plots we see that all computations are stable; note that the Courant number $\Delta t/h$ for these computations ranges from $0.1$ to $3.2$.

To verify the predictions by~\cref{thm:l2_instab_fe}, we first consider the HV scheme with $(L,R)=(2,2)$ and compute the time step size using $\Delta t=0.8h^p$ with $p=1,2,3$, where $h$ is the uniform cell size that takes the value $1/20$, $1/40$, $1/80$, and $1/160$.
The relative $L^2$-norms of the numerical solutions at integer $t$ are plotted (logarithmic scale is used if necessary) in~\cref{fg:num_l2_fe_r2_p1}--\cref{fg:num_l2_fe_r2_p3} for the three values of $p$.
We observe that when $p=1$ the numerical solution blows up quickly and the blowing-up speed increases as $h$ decreases.
When $p=2$, all solutions eventually blow up, but the rate at which is independent of the mesh size $h$.
Lastly, the $L^2$-norm of the numerical solutions almost stay constant when $p=3$; and we plot the nodal solutions at $T=10$ in~\cref{fg:num_l2_fe_r2_p3_plot}.
\begin{figure}\centering
  \begin{subfigure}[b]{.48\textwidth}\centering
    \includegraphics[trim=1in 0 1in 0.2in, clip, width=\textwidth]{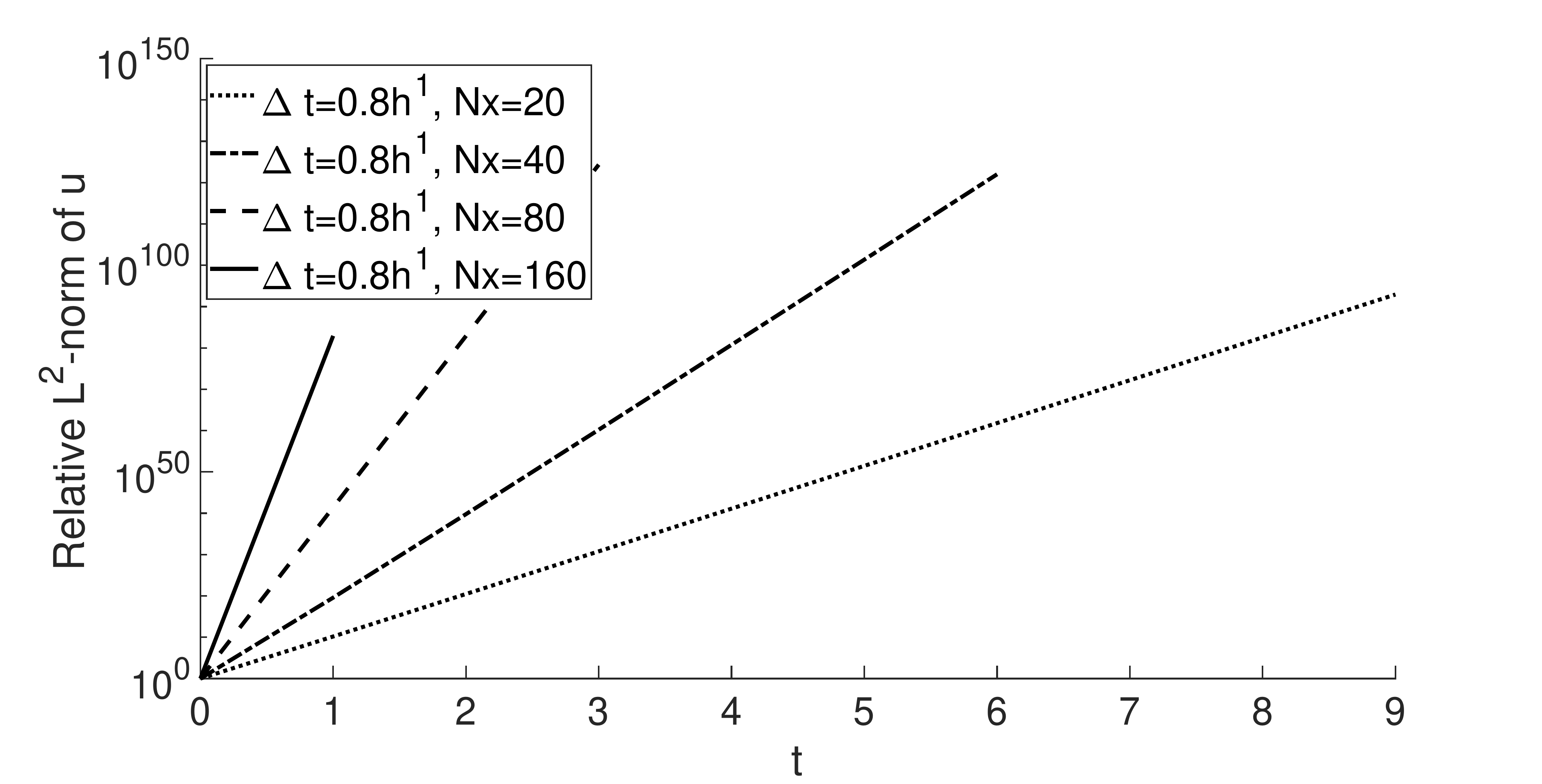}
    \caption{Histories of $\frac{\nrm{\bs{u}^n}}{\nrm{\bs{u}^0}}$ computed by $\Delta t=0.8h$.}
    \label{fg:num_l2_fe_r2_p1}
  \end{subfigure}
  \begin{subfigure}[b]{.48\textwidth}\centering
    \includegraphics[trim=1in 0 1in 0.2in, clip, width=\textwidth]{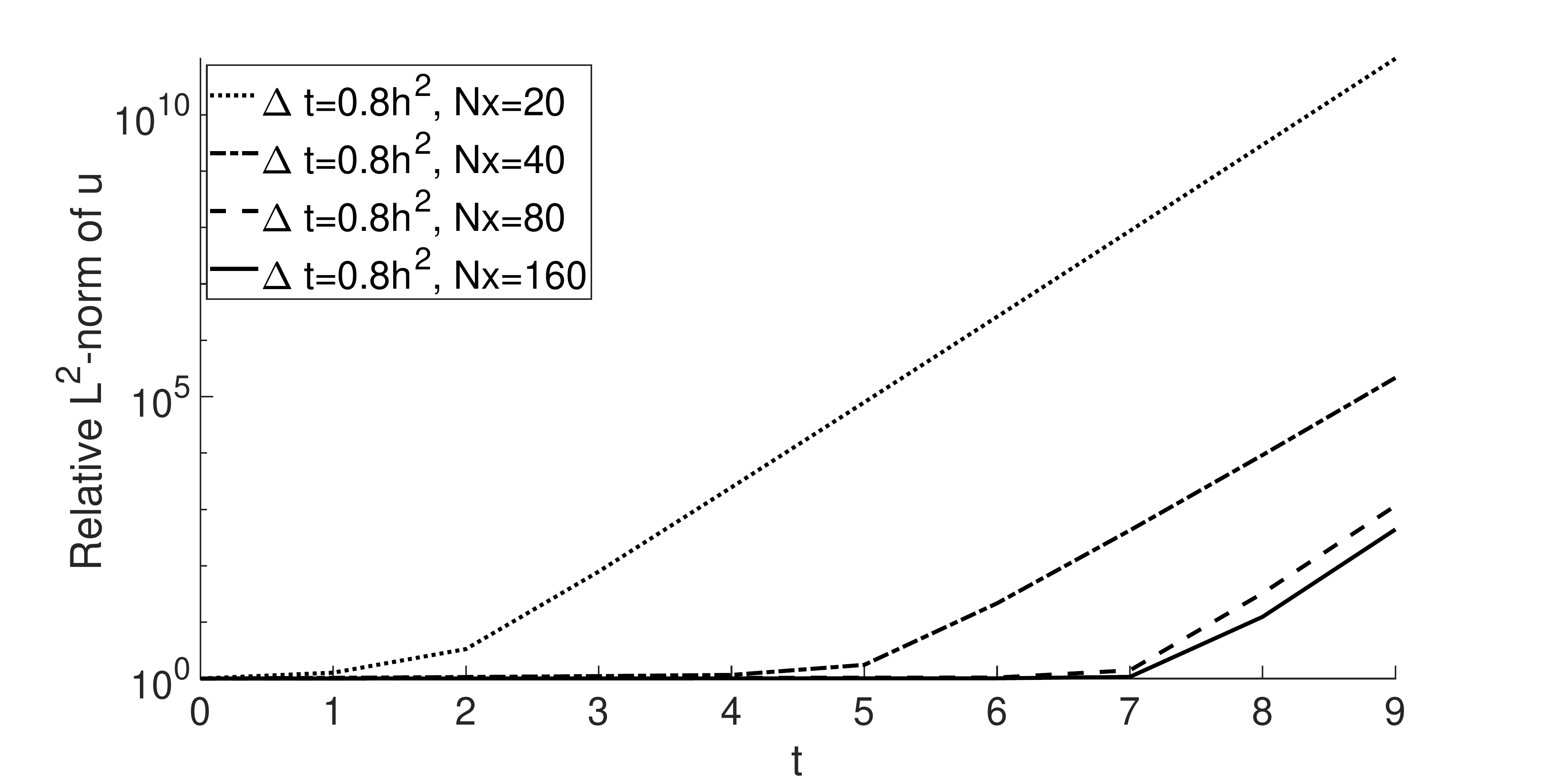}
    \caption{Histories of $\frac{\nrm{\bs{u}^n}}{\nrm{\bs{u}^0}}$ computed by $\Delta t=0.8h^2$.}
    \label{fg:num_l2_fe_r2_p2}
  \end{subfigure} \\
  \begin{subfigure}[b]{.48\textwidth}\centering
    \includegraphics[trim=1in 0 1in 0.2in, clip, width=\textwidth]{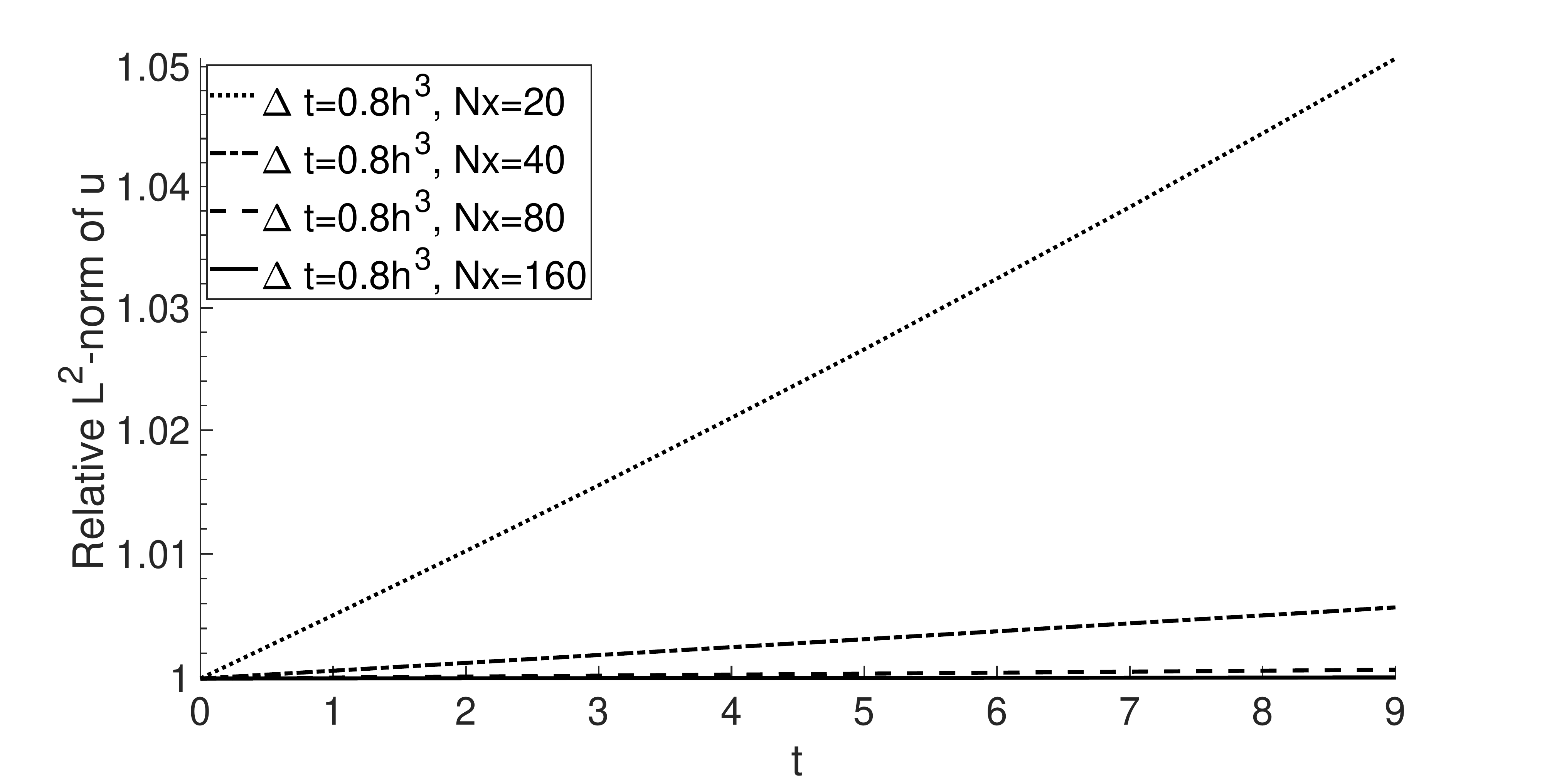}
    \caption{Histories of $\frac{\nrm{\bs{u}^n}}{\nrm{\bs{u}^0}}$ computed by $\Delta t=0.8h^3$.}
    \label{fg:num_l2_fe_r2_p3}
  \end{subfigure}
  \begin{subfigure}[b]{.48\textwidth}\centering
    \includegraphics[trim=1in 0 1in 0.2in, clip, width=\textwidth]{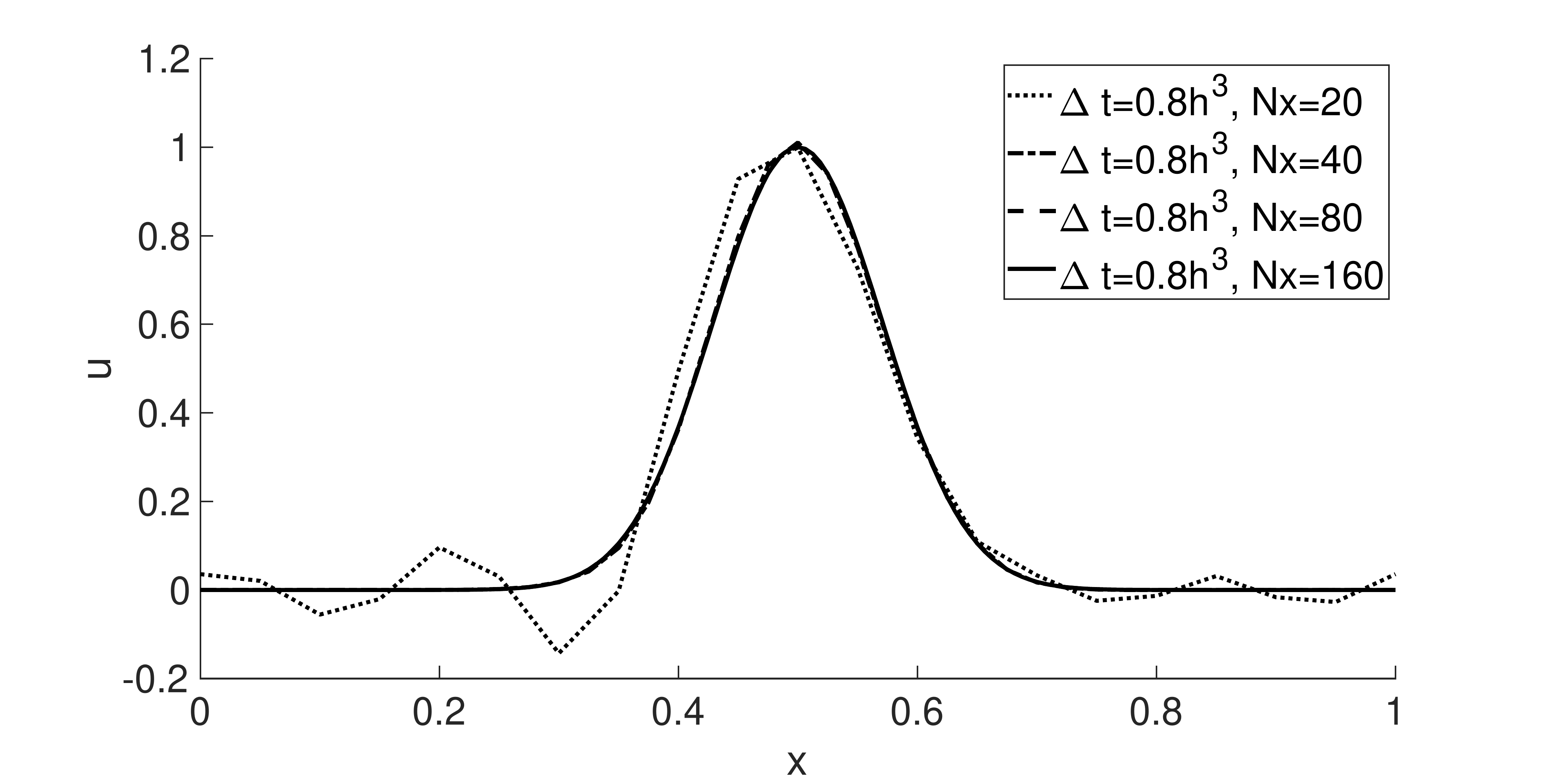}
    \caption{Nodal solutions at $T=10$ using $\Delta t=0.8h^3$.}
    \label{fg:num_l2_fe_r2_p3_plot}
  \end{subfigure}
  \vglue -.15in
  \caption{Relative $L^2$-norm of numerical solution computed by the central HV scheme with stencil $(L,R)=(2,2)$ in space and forward Euler method~\cref{eq:l2_fe} in time.
  The time step size is computed by $\Delta t=0.8h^p$, $p=1,2,3$.
  The only stable solutions are obtained when $p=3$, in which case the nodal solutions at $T=10$ are plotted in~\cref{fg:num_l2_fe_r2_p3_plot}.}
  \label{fg:num_l2_fe_r2}
\end{figure}
The corersponding plots for the HV method with $(L,R)=(7,7)$ are offered in~\cref{fg:num_l2_fe_r7}, and the same conclusions can be obtained.
\begin{figure}\centering
  \begin{subfigure}[b]{.48\textwidth}\centering
    \includegraphics[trim=1in 0 1in 0.2in, clip, width=\textwidth]{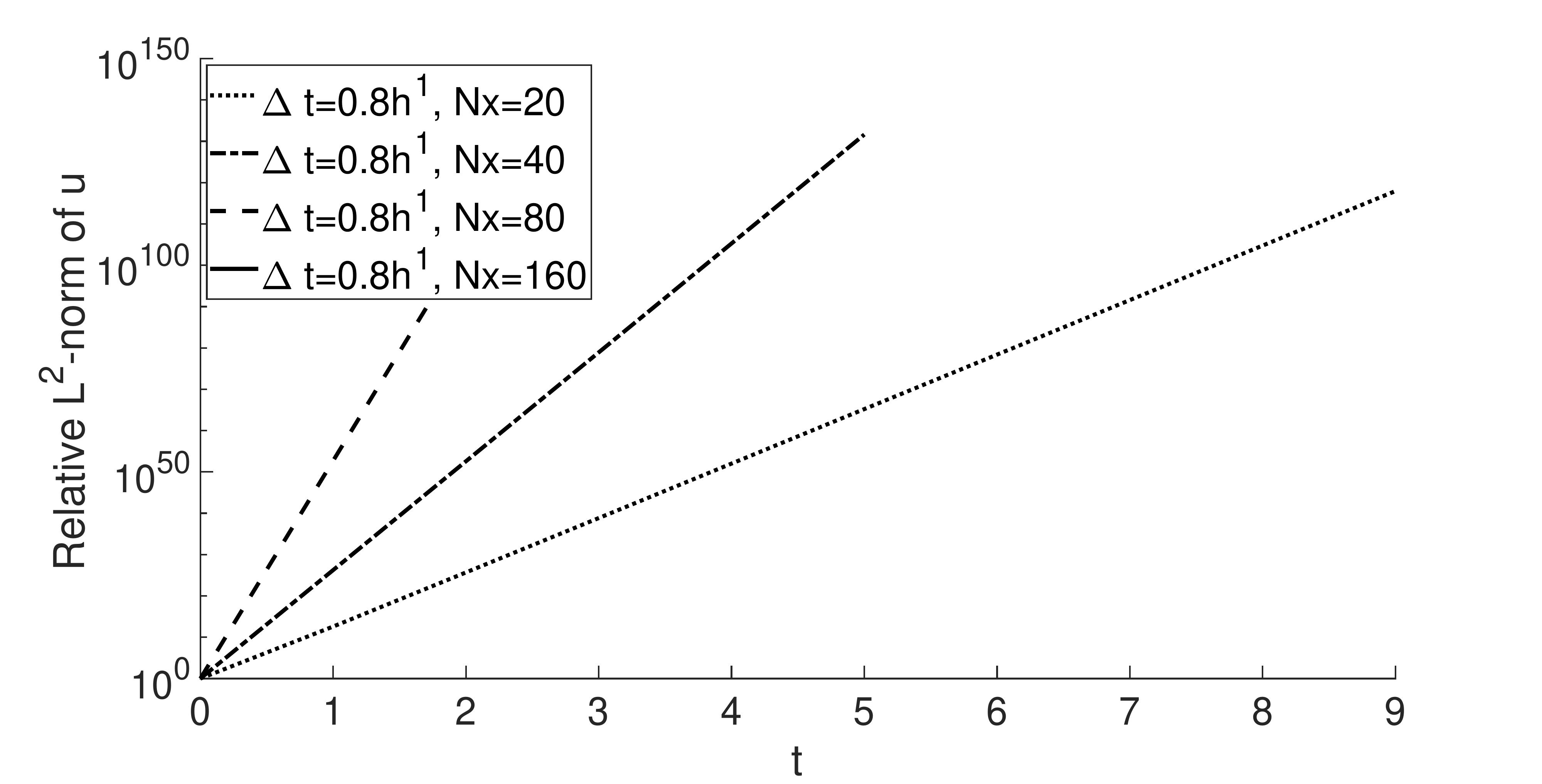}
    \caption{Histories of $\frac{\nrm{\bs{u}^n}}{\nrm{\bs{u}^0}}$ computed by $\Delta t=0.8h$.}
    \label{fg:num_l2_fe_r7_p1}
  \end{subfigure}
  \begin{subfigure}[b]{.48\textwidth}\centering
    \includegraphics[trim=1in 0 1in 0.2in, clip, width=\textwidth]{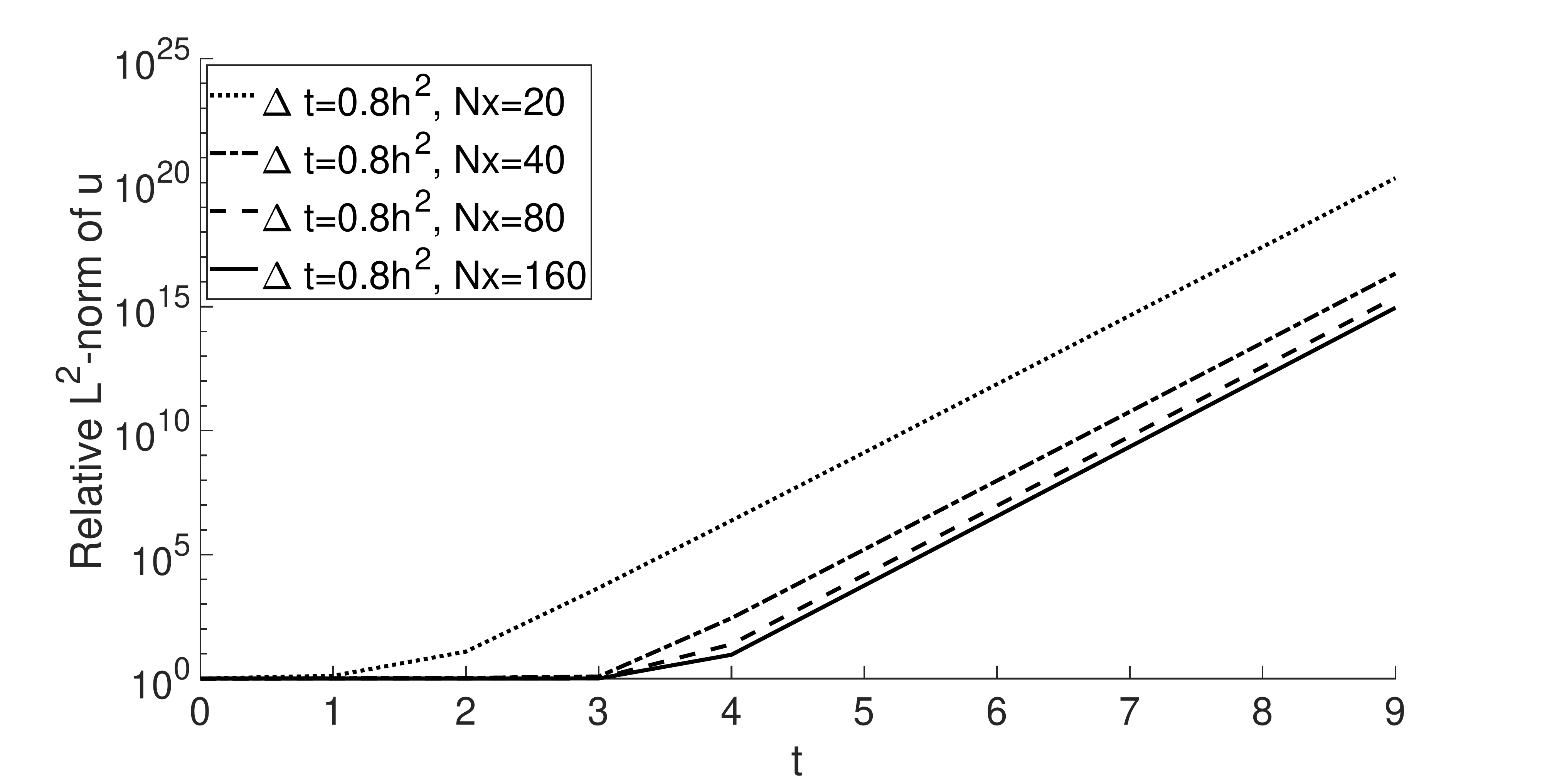}
    \caption{Histories of $\frac{\nrm{\bs{u}^n}}{\nrm{\bs{u}^0}}$ computed by $\Delta t=0.8h^2$.}
    \label{fg:num_l2_fe_r7_p2}
  \end{subfigure} \\
  \begin{subfigure}[b]{.48\textwidth}\centering
    \includegraphics[trim=1in 0 1in 0.2in, clip, width=\textwidth]{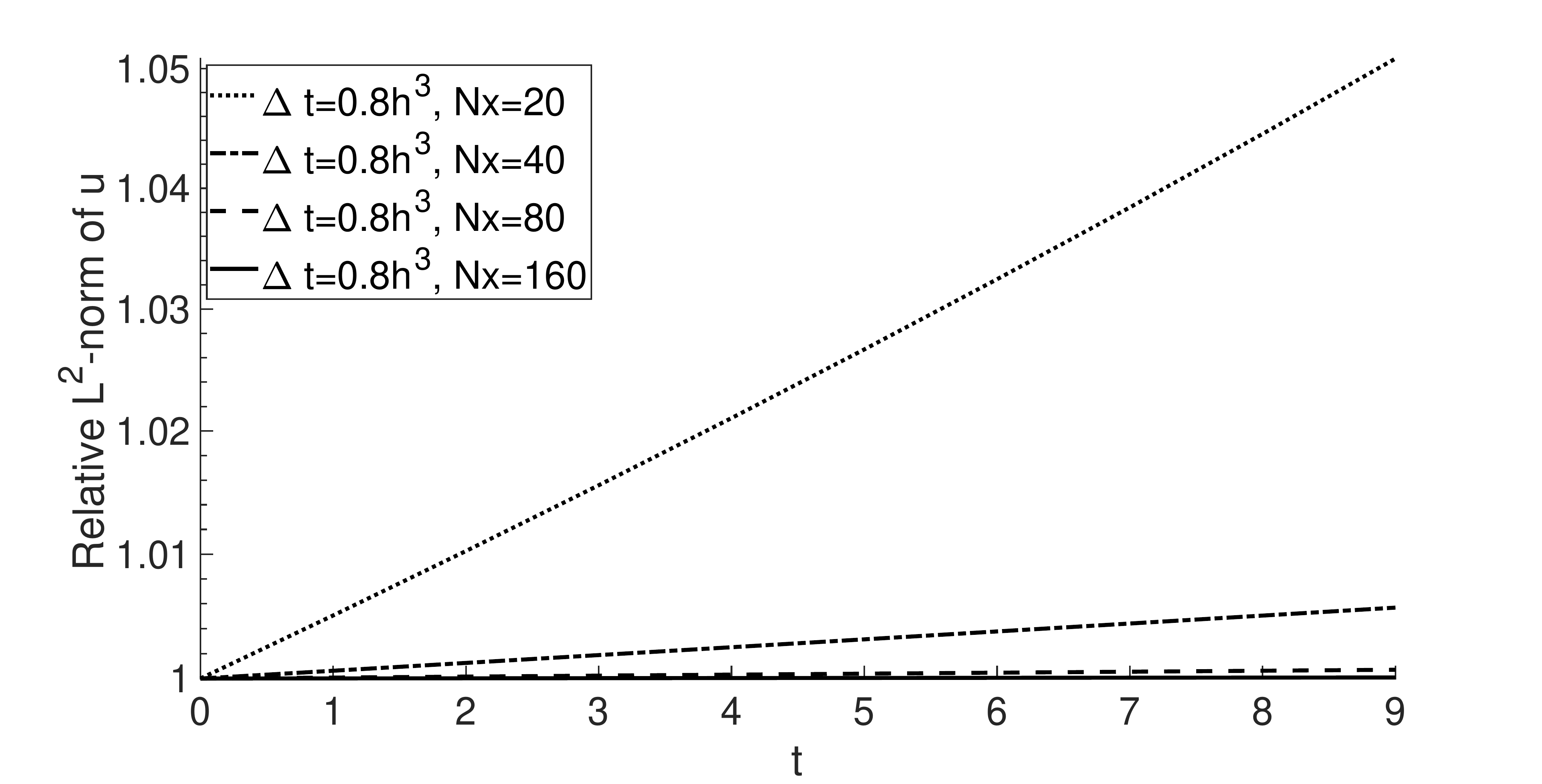}
    \caption{Histories of $\frac{\nrm{\bs{u}^n}}{\nrm{\bs{u}^0}}$ computed by $\Delta t=0.8h^3$.}
    \label{fg:num_l2_fe_r7_p3}
  \end{subfigure}
  \begin{subfigure}[b]{.48\textwidth}\centering
    \includegraphics[trim=1in 0 1in 0.2in, clip, width=\textwidth]{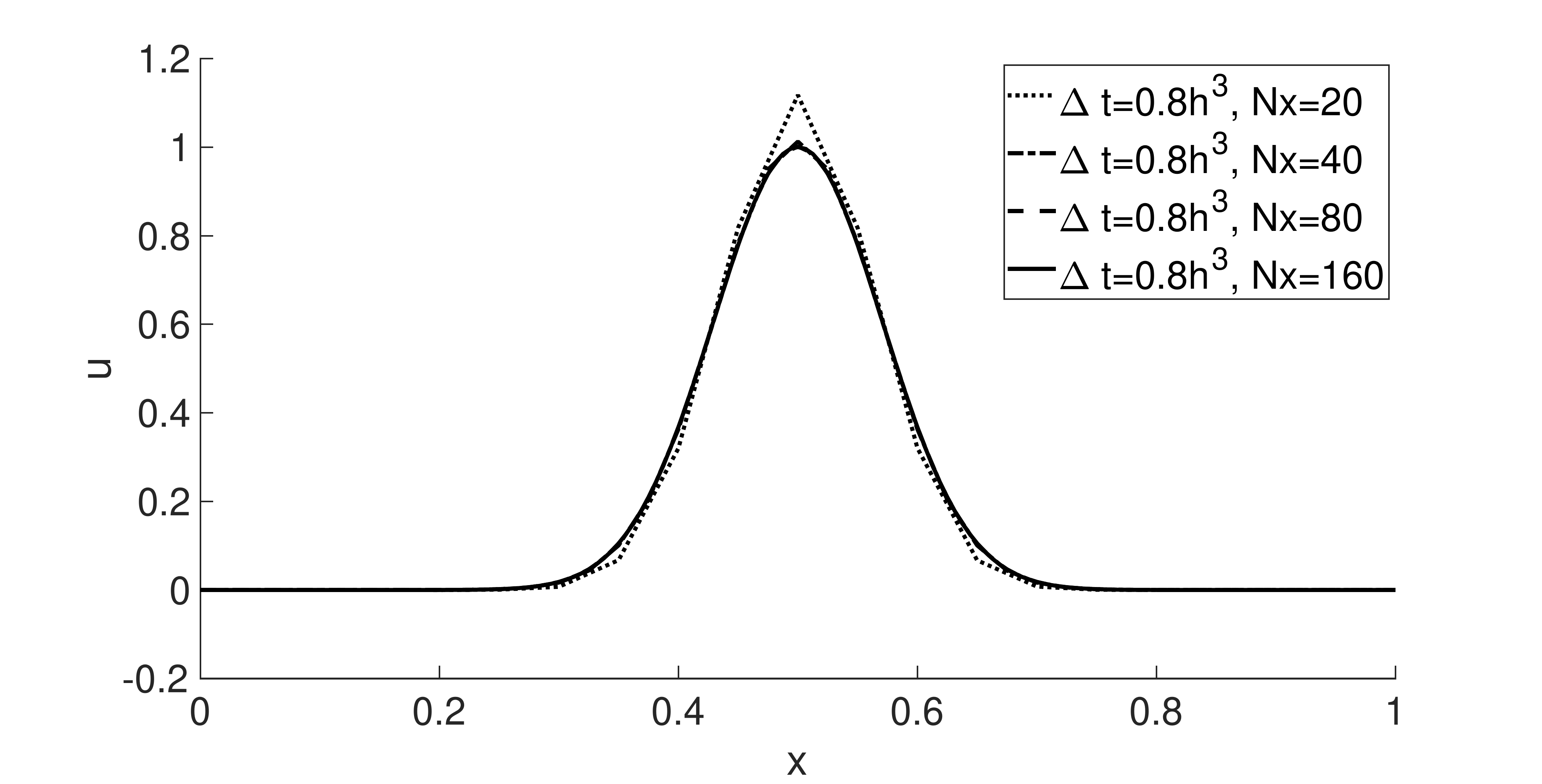}
    \caption{Nodal solutions at $T=10$ using $\Delta t=0.8h^3$.}
    \label{fg:num_l2_fe_r7_p3_plot}
  \end{subfigure}
  \vglue -.15in
  \caption{Relative $L^2$-norm of numerical solution computed by the central HV scheme with stencil $(L,R)=(7,7)$ in space and forward Euler method~\cref{eq:l2_fe} in time.
  The time step size is computed by $\Delta t=0.8h^p$, $p=1,2,3$.
  The only stable solutions are obtained when $p=3$, in which case the nodal solutions at $T=10$ are plotted in~\cref{fg:num_l2_fe_r7_p3_plot}.}
  \label{fg:num_l2_fe_r7}
\end{figure}

\section{Conclusions}
\label{sec:concl}
In this paper we fully categorize stable and unstable hybrid-variable (HV) discretizations for linear advection equations and establish the precise stability barrier of these schemes. 
The HV discretization framework belongs to the more general Hermite-type methods, and it finds approximations to both cell-averaged values and nodal values of the solution, and evolves them in time simultaneously.
The semi-discretization of cell-averaged solutions are updated in conservative form, with the flux computed using nodal solutions; whereas the semi-discretization of nodal solutions is constructed the HV approximation to spatial derivatives at nodes, called the hybrid-variable discrete differential operators (HV-DDO).
It is known that given a HV-DDO with a continuous stencil indexed by $(L,R)$, with $L$ being the number of unknowns (both cell-averaged and nodal ones) in the upwind direction and $R$ being that number in the downwind direction, the spatial order of accuracy of the resulting HV method is $L+R+1$, provided that $L>R$ and that the method is stable.

This work focuses on identifying all pairs of $(L,R)$ such that $L>R$ and the corresponding HV scheme is stable.
Particularly we proved that all methods with $L-R\ge3$ are unstable except for $(L,R)=(3,0)$, and all HV methods with $1\le L-R\le2$ are stable; hence we established the precise stability barrier of the HV discretizations.
Additionally, we proved that all central HV schemes with $L=R$ are neutrally stable in the sense that the $L^2$-norm of the semi-discretized solutions remain bounded at all times; this stability result is extended to fully-discretized HV schemes.
To this end, we fully classify all HV discretizations according to their stability at the semi-discretized level.
At last, we verify the main theoretical results by extensive numerical tests.


\bibliographystyle{plain}      
\bibliography{pap}

\appendix
\crefalias{section}{appendix}






\end{document}